\theoremstyle{plain}
\newtheorem{thm}{Theorem}[section]
\newtheorem{lemma}[thm]{Lemma}
\newtheorem{prop}[thm]{Proposition}
\newtheorem{corollary}[thm]{Corollary}
\newtheorem{introthm}{Theorem}
\newtheorem{introprop}[introthm]{Proposition}
\theoremstyle{definition}
\newtheorem{definition}[thm]{Definition}
\newtheorem{example}[thm]{Example}
\newtheorem{remark}[thm]{Remark}
\theoremstyle{remark}
\newtheorem{claim}[thm]{Claim}
\renewcommand{\tilde}{\widetilde}
\renewcommand{\bar}{\overline}
\newcommand{\bbC}{\mathbb{C}}
\newcommand{\bbZ}{\mathbb{Z}}
\newcommand{\bbN}{\mathbb{N}}
\newcommand{\bbQ}{\mathbb{Q}}
\newcommand{\bbR}{\mathbb{R}}
\newcommand{\bbB}{\mathbb{B}}
\newcommand{\ra}{\rightarrow}
\newcommand{\xra}{\xrightarrow}
\newcommand{\cu}{\subseteq}
\newcommand{\affPhi}{\widehat{\Phi}}
\newcommand{\affW}{\widehat{W}}
\newcommand{\extW}{\affW_{ext}}
\newcommand{\affX}{\widehat{X}}
\newcommand{\affT}{\widehat{T}}
\newcommand{\sch}[1]{\bar{\Gr_{#1}}}
\newcommand{\calA}{\mathcal{A}}
\newcommand{\calB}{\mathcal{B}}
\newcommand{\Gr}{\mathcal{G}r}
\newcommand{\IC}{\mathrm{IC}}
\newcommand{\bfH}{\mathbf{H}}
\newcommand{\bfN}{\mathbf{N}}
\newcommand{\undH}{\underline{\mathbf{H}}}
\newcommand{\htil}{\tilde{h}}
\newcommand{\ftil}{\tilde{f}}
\newcommand{\etil}{\tilde{e}}
\newcommand{\strtil}{{\str}_{(2,1,2)}}
\newcommand{\eps}{\varepsilon}
\newcommand{\affHec}{\widetilde{\mathcal{H}}}
\newcommand{\undc}{\underline{c}}
\newcommand{\aand}{\qquad\text{ and }\qquad}
\newcommand{\Address}{
	\bigskip{\footnotesize
		
		\textsc{Dipartimento di Matematica, Universit\`a di Pisa, Italy}\par\nopagebreak
		\textit{E-mail address}: \texttt{leonardo.patimo@unipi.it}
}}
\DeclareMathOperator{\grdim}{grdim}
\DeclareMathOperator{\str}{string}
\DeclareMathOperator{\Arr}{Arr}
\DeclareMathOperator{\Ima}{Im}
\DeclareMathOperator{\wt}{wt}
\DeclareMathOperator{\Conv}{Conv}
\DeclareMathOperator{\HL}{HL}
\DeclareMathOperator{\rk}{rk}
\newcommand{\shift}{Z}
\title{The charge statistic in type $A$ via the Affine Grassmannian}
\author{Leonardo Patimo}
\begin{document}

\maketitle

\begin{abstract}
We give a new construction of Lascoux--Sch\"utzenberger's charge statistic in type A which is motivated by the geometric Satake equivalence. We obtain a new formula for the charge statistic in terms of modified crystal operators and an independent proof of this formula which does not rely on tableaux combinatorics.
\end{abstract}

\section*{Introduction}

Finite-dimensional representations of $SL_{n+1}(\bbC)$ are a classical topic in representation theory, with their structure and classification thoroughly explored. Irreducible representations are in bijection with partitions with at most $n$ parts $
\lambda=(\lambda_1\geq \lambda_2 \geq \ldots \geq \lambda_n\geq 0)$.
We denote by $L(\lambda)$ the irreducible representations of highest weight $\lambda$. The representations $L(\lambda)$  have a canonical basis, which can be constructed geometrically using the geometric Satake correspondence. This basis can be parametrized in terms of combinatorial object, such as semistandard Young tableaux or Littelmann's paths. Moreover, this canonical basis can  be endowed with an additional structure, known as \emph{crystal graph}, which serves as a combinatorial representation theoretic ``shadow'' of the full structure. We denote by $\calB(\lambda)$ the crystal graph of the representation $L(\lambda)$.
To compute the dimension of weight spaces of $L(\lambda)$ is thus sufficient to enumerate elements in a certain explicitly defined set. 

These weight multiplicities admit a $q$-analogue also known as the Kostka--Foulkes polynomials $K_{\lambda,\mu}(q)$. Kostka--Foulkes polynomials were originally defined in the context of symmetric functions, where they appear as the coefficients of the expansion of Schur polynomials in terms of Hall--Littlewood polynomials. Furthermore, they have a representation theoretic interpretation as they record the dimensions of the graded components of the Brylinski filtration on weight spaces \cite{BriLimits} and can also be obtained as Kazhdan--Lusztig polynomials for the corresponding affine Weyl group \cite{LusSingularities}. In particular, they have positive coefficients and this naturally leads to the search for combinatorial interpretations of their coefficients.

Lascoux and Sch\"utzenberger gave a combinatorial formula for the Kostka--Foulkes polynomials by defining a statistic on the set of semi-standard tableaux, called the \emph{charge statistic} \cite{LSSur}. This is a function $c:\calB(\lambda)_\mu\ra \bbZ_{\geq 0}$ such that
\begin{equation}\label{chargeintro}K_{\lambda,\mu}(q)=\sum_{T\in \calB(\lambda)_\mu} q^{c(T)}.  \end{equation}
This charge function is defined in terms of an operation, called \emph{cyclage}, that can be performed on the set of semistandard Young tableaux, after interpreting them as words in the plactic monoid.

Alternative constructions of the charge statistic are known.
In \cite{LLTCrystal}, Lascoux, Leclerc and Thibon gave a formula for the charge statistic in terms of the crystal graphs while Nakayashiki and Yamada \cite{NYKostka} defined the charge using the energy function on affine crystals. However, the proofs of all these formulas ultimately rely on tableaux combinatorics and are based on an operation called \emph{cyclage} of tableaux.

The goal of this papers is to construct a charge statistic on $\calB(\lambda)$ based on the geometric methods developed in \cite{ChargeGen}.
Motivated by the geometric Satake equivalence, we relate the charge statistic on $\calB(\lambda)$ to the geometry of the affine Grassmannian $\Gr^\vee$ of the Langlands dual group $PGL_n(\bbC)$. To our knowledge, this is the first time that the charge statistic is directly linked to the geometry of the affine Grassmannian.

\subsection*{Recharge statistics attached to family of cocharacters}

We now briefly recall the approach developed in \cite{ChargeGen}.
Let $T\subset SL_{n+1}(\bbC)$ denote the maximal torus consisting of diagonal matrices and let $T^\vee\subset PGL_{n+1}(\bbC)$ denote its dual torus. Let $\Gr^\vee$ denote the affine Grassmannian of $PGL_{n+1}(\bbC)$. Let $\affT:=T^\vee\times \bbC^*$ denote the augmented torus, which acts naturally on $\Gr^\vee$. 
 Each cocharacter $\eta\in X_\bullet(\affT)$ defines a $\bbC^*$-action on $\Gr^\vee$. This in turn defines, for each weight $\mu$, a hyperbolic localization functor $\HL^\eta_{\mu}$,  on perverse sheaves on $\Gr^\vee$ (cf. \cite[\S 2.4]{ChargeGen}).
 
 We consider a family of cocharacters $\eta(t)$. We assume that at $t=0$, $\eta(0)$ lies in the MV region \cite[Definition 2.18]{ChargeGen}. In the MV region the hyperbolic localization functors are exact, and are concentrated in a single degree when applied to intersection cohomology sheaves $\IC_\lambda$. Further, we assume that for $t\gg 0$ the cocharacter $\eta(t)$ lies in the KL region, where the graded dimension of hyperbolic localization computes the Kostka--Foulkes polynomials, i.e., we have \[K_{\lambda,\mu}(v^2)=\grdim \HL^{\eta(t)}_\mu(\IC_\lambda)v^{-\ell(\mu)}.\]
 
 Our goal is to find, for each cocharacter $\eta(t)$ in the family, a statistic $r(t,-)$, called \emph{recharge}, satisfying
 \begin{equation} \label{rstat}\grdim\left(\HL^{\eta(t)}_{\mu}(\IC_\lambda)\right)=\sum_{T\in \calB(\lambda)_\mu} q^{r(t,T)}\end{equation}
To achieve this, we start with a trivial statistic in the MV region and then modify it appropriately whenever our family $\eta(t)$ crosses a wall of the form
\[H_{\alpha^\vee}:=\left\{ \eta \in X_\bullet(\affT) \mid \langle \eta,\alpha^\vee\rangle =0\right\},\]
with $\alpha^\vee$ a real root of the affine root system of $\Gr^\vee$.

For each wall crossing, we determine a  \emph{swapping function} (cf. \cite[Definition 2]{ChargeGen}), which indicates how to modify the statistic $r$ in a way that \eqref{rstat} keeps being satisfied. Eventually, after doing this for all the walls crossed, we obtain a statistic $r(t,-)$ in the KL region, from which a charge statistic satisfying \eqref{chargeintro} can be easily deduced (cf. \eqref{charge}).



\subsection*{Crystals and twisted Bruhat graphs in type $A$}

To describe swapping functions in type $A$ we first need some work on crystal graphs and on the combinatorics of Bruhat graphs in type $A$.

Let $X$ be the character (or weight) lattice of $T$ and let $X_+$ the subset of dominant characters. 
For $\lambda \in X_+$, we consider the crystal $\calB(\lambda)$ associated to an irreducible representation of $G$ of highest weight $\lambda$. Let $f_i,e_i$, for $1\leq i \leq n$, denote the crystal operators on $\calB(\lambda)$.
 There is an action of $W$ on $\calB(\lambda)$, where the simple reflection $s_i$ acts by reversing each $f_i$-string.

Lecouvey and Lenart \cite{LLAtomic} defined an atomic decomposition of the crystal $\calB(\lambda)$ in type $A$. 
Our first result is an equivalent formulation of their decomposition (cf. \Cref{ccareLL}).
\begin{introthm}\label{introthm1}
	The closure of the $W$-orbits under the crystal operator $f_n$ are the atoms of an atomic decomposition of $\calB(\lambda)$.
\end{introthm}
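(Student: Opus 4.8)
The plan is to show that the partition of $\calB(\lambda)$ whose parts are the sets $\barO$ --- with $\calO$ ranging over the $W$-orbits and $\barO$ the smallest union of $W$-orbits containing $\calO$ that is closed under $f_n$ and $e_n$ --- coincides with Lecouvey--Lenart's partition of $\calB(\lambda)$ into atoms \cite{LLAtomic}, recalled in \Cref{ccareLL}. Recall that each Lecouvey--Lenart atom is a $W$-stable subset of $\calB(\lambda)$ labelled by a dominant weight $\nu\unrhd\lambda$, and that these atoms are cut out by an explicit combinatorial rule: the label $\nu$ of the atom of a vertex $b$ is produced by an algorithm run on $b$, and two vertices lie in the same atom if and only if the algorithm returns the same $\nu$. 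Since each $\barO$ is $W$-stable by construction, it is enough to prove the two inclusions: \textbf{(A)} every $\barO$ lies in a single Lecouvey--Lenart atom, and \textbf{(B)} every Lecouvey--Lenart atom lies in a single $\barO$.

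For \textbf{(A)}: since a whole $W$-orbit already lies inside one atom, it suffices to show that each Lecouvey--Lenart atom is stable under $f_n$, i.e.\ is a union of complete $f_n$-strings. I would obtain this by unwinding their labelling algorithm and verifying that its output is unchanged along $f_n$-strings. As $f_1,\dots,f_{n-2}$ commute with $f_n$ and $s_n$ preserves $f_n$-strings setwise, the only genuine point is the interaction between $f_n$ and the crystal structure at the node $n-1$, which I expect to reduce, by passing to the $\{\alpha_{n-1},\alpha_n\}$-sub-crystal, to a finite rank-two verification.

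For \textbf{(B)}: one must show that any two vertices of the same Lecouvey--Lenart atom can be joined by a sequence of moves using only the $W$-action and the operator $f_n$; by \textbf{(A)} the atom is a disjoint union of sets $\barO$, so the content is that it is never a strict such union. I envisage two routes. The first is a counting argument: by the recharge formalism of \cite{ChargeGen}, the charge-generating function over $\barO\cap\calB(\lambda)_\mu$ is a single atomic Kostka--Foulkes polynomial, which by the above coincides with the one attached to the Lecouvey--Lenart atom containing $\barO$, so the two partitions have the same number of parts of each type and must agree. The second, more concrete, route fixes in each atom $A$ a distinguished vertex $b_A$ (lying in the $W$-orbit of the top of $A$) and shows that every vertex of $A$ reaches $b_A$ by a word in $W$ and $f_n$, arguing by induction along the graph cut out by Lecouvey--Lenart's algorithm and traversing each $f_i$-edge with $i<n$ by a short word in $W$ and $f_n$. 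I expect \textbf{(B)}, and specifically this last point, to be the main obstacle: the $W$-action only reverses each individual $f_i$-string, whereas lying in the same Lecouvey--Lenart atom is strictly coarser than lying in the same $W$-orbit, so the operator $f_n$ must do real work in bridging the $f_i$-strings for $i<n$ --- exactly where the combinatorics specific to type $A$ and to the distinguished end node $n$ come in. A secondary difficulty is that \textbf{(A)} itself, the $f_n$-stability of Lecouvey--Lenart's atoms, may not be transparent from their original definition, whence the rank-two reduction suggested above.
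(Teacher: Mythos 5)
Your two-inclusion skeleton matches the paper's proof of \Cref{ccareLL}, but the way you plan to establish each inclusion misses the structural point that makes the argument work, and inclusion \textbf{(B)} --- which you identify as the main obstacle --- is in fact the easy direction. The LL atoms are, by definition, the connected components of the graph $\bbB^+(\lambda)$ on the dominant-weight vertices, whose edges are given by the operators $\ftil_\alpha = w f_n w^{-1}$ for $\alpha\in\Phi_+$; there is no ``labelling algorithm'' producing $\nu$, and the edges of the atom graph are not $f_i$-edges for $i<n$. Consequently \textbf{(B)} is immediate: an edge $T=\ftil_\alpha(T')$ of $\bbB^+(\lambda)$ is literally a word in the $W$-action and $f_n$, so each LL atom is contained in a single set $\barO$. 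Your two proposed routes for \textbf{(B)} do not work as stated: the counting argument via the recharge formalism of \cite{ChargeGen} is circular at this point of the paper (the recharge machinery is what the atomic decomposition is later used to feed, and in any case equality of weight generating functions of two partitions does not force the partitions to coincide part by part), and the ``traverse each $f_i$-edge by a word in $W$ and $f_n$'' induction is aimed at a graph whose edges are not those of $\bbB^+(\lambda)$.

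The genuine content is your inclusion \textbf{(A)}, the $f_n$-stability of the LL atoms, and here your plan is too vague to count as a proof: you propose to check invariance of an unspecified algorithm along $f_n$-strings and hope for a rank-two reduction. The paper's argument is concrete and rests on a weight-lattice fact specific to type $A$ (\Cref{Aweight}): for any $\mu\in X$, either $s_n(\mu)=\mu+\alpha_n$ or there is a single $w\in W$ making both $\mu$ and $\mu+\alpha_n$ dominant. Given $T=f_n(T')$, the first alternative gives $T=s_n(T')$, so the step is already a $W$-move; the second alternative lets one conjugate the $f_n$-edge by $w$ into an edge $\ftil_{\pm w(\alpha_n)}$ of $\bbB^+(\lambda)$ (or its reverse), so $\bar T$ and $\bar{T'}$ lie in the same component. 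Your proposal does not identify this simultaneous-dominantization step, which is the one nontrivial input. Finally, note that the theorem also relies on the LL atoms actually forming an atomic decomposition (\Cref{LLtheorem}, quoted from \cite{LLAtomic}); both you and the paper take this as given, which is fine, but your write-up should say so explicitly.
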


 On $\calB(\lambda)$ we define crystal operators $e_\alpha,f_\alpha$ for any positive root $\alpha\in \Phi_+$ by twisting the usual crystal operators $f_i,e_i$ (see \Cref{mcoSec}). 
For any $\alpha\in \Phi_+$ and $T\in \calB(\lambda)$, let
$\eps_\alpha(T)=\max \{ k \mid e_\alpha^k(T)\neq 0\}$.
Then we define the \emph{atomic number} of $T$ as
\[ \shift(T)=\langle \wt(T),\rho^\vee\rangle+\sum_{\alpha \in \Phi_+} \eps_\alpha(T).\]

The name is explained by the main property of $Z$ (cf. \Cref{atomicthm}).
\begin{introthm}
	The atomic number $Z$ is constant on the atoms from \Cref{introthm1}.
\end{introthm}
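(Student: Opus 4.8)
The plan is to first rewrite $Z$ in a more symmetric form and then to reduce the statement to two invariance properties. For $\alpha\in\Phi_+$ put $\varphi_\alpha(T)=\max\{k\mid f_\alpha^k(T)\neq 0\}$, so that $\varphi_\alpha(T)-\eps_\alpha(T)=\langle\wt(T),\alpha^\vee\rangle$ and $\eps_\alpha(T)+\varphi_\alpha(T)$ is the length of the $\alpha$-string through $T$. Summing $\varphi_\alpha-\eps_\alpha$ over $\Phi_+$ and using $\sum_{\alpha\in\Phi_+}\alpha^\vee=2\rho^\vee$ gives
\[Z(T)=\langle\wt(T),\rho^\vee\rangle+\sum_{\alpha\in\Phi_+}\eps_\alpha(T)=\frac12\sum_{\alpha\in\Phi_+}\bigl(\eps_\alpha(T)+\varphi_\alpha(T)\bigr),\]
so $Z(T)$ is half the sum, over all positive roots $\alpha$, of the lengths of the $\alpha$-strings through $T$. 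By \Cref{introthm1} every atom is the closure under $f_n$ of a single $W$-orbit, so any two of its elements are joined by a chain of moves $X\leftrightarrow w\cdot X$ ($w\in W$) and $X\leftrightarrow f_n(X)$; hence it suffices to show that $Z$ is invariant under each simple reflection $s_i$ and under $f_n$.

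For the $W$-invariance I would use that, by \Cref{mcoSec}, the twisted operators are $W$-equivariant: for $\alpha\neq\alpha_i$ the action of $s_i$ carries the $\alpha$-string through $T$ bijectively to the $s_i(\alpha)$-string through $s_i\cdot T$, and it reverses the $\alpha_i$-string through $T$. In both cases string lengths are preserved, and $\alpha\mapsto s_i(\alpha)$ is a bijection of $\Phi_+\setminus\{\alpha_i\}$, so reindexing the displayed sum yields $Z(s_i\cdot T)=Z(T)$.

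For the invariance under $f_n$ I would assume $f_n(T)\neq 0$ and compare root-string lengths at $T$ and at $f_n(T)$. The $\alpha_n$-string through $f_n(T)$ is the $\alpha_n$-string through $T$, so its length does not change. If $\alpha=\alpha_i+\dots+\alpha_j$ with $j\le n-2$, then in the restriction of $\calB(\lambda)$ to $\langle\alpha_1,\dots,\alpha_{n-2}\rangle\times\langle\alpha_n\rangle$ — a tensor product of crystals — the operators $e_\alpha,f_\alpha$ act through the first factor and $e_n,f_n$ through the second, so $\eps_\alpha$ and $\varphi_\alpha$ are unchanged by $f_n$. The remaining positive roots are, for $1\le i\le n-1$, the pairs $\beta_i:=\alpha_i+\dots+\alpha_{n-1}$ and $\beta_i':=\alpha_i+\dots+\alpha_n=s_n(\beta_i)$, which I would treat together: it is enough that $(\eps_{\beta_i}+\varphi_{\beta_i}+\eps_{\beta_i'}+\varphi_{\beta_i'})$ take the same value at $T$ and at $f_n(T)$. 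Putting $w_i=s_is_{i+1}\cdots s_{n-2}$, one has $w_i(\alpha_{n-1})=\beta_i$, $w_i(\alpha_{n-1}+\alpha_n)=\beta_i'$, $w_i$ fixes $\alpha_n$, and $w_i\cdot(-)$ commutes with $e_n,f_n$ (each $s_k$ involved being non-adjacent to $\alpha_n$), hence maps $\alpha_n$-strings to $\alpha_n$-strings. Using $W$-equivariance once more, this identity is transported by $w_i$ to the rank-two assertion: for the $\mathfrak{sl}_3$-crystal structure on $\calB(\lambda)$ attached to $\langle\alpha_{n-1},\alpha_n\rangle$, the function $X\mapsto(\eps_{\alpha_{n-1}}+\varphi_{\alpha_{n-1}}+\eps_{\alpha_{n-1}+\alpha_n}+\varphi_{\alpha_{n-1}+\alpha_n})(X)$ is constant along $\alpha_n$-strings.

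The hard part will be exactly this rank-two identity. Since $\eps_{\alpha_{n-1}+\alpha_n}+\varphi_{\alpha_{n-1}+\alpha_n}=(\eps_{\alpha_{n-1}}+\varphi_{\alpha_{n-1}})\circ(s_n\cdot-)$ and $s_n$ reverses $\alpha_n$-strings, it is equivalent to the following: along an $\alpha_n$-string $N_0,\dots,N_L$, the sequence $a_k$ of $\alpha_{n-1}$-string lengths satisfies $a_k+a_{L-k}=a_0+a_L$ for all $k$. I would prove this by decomposing the $\mathfrak{sl}_3$-crystal into its connected components $\calB(p\varpi_{n-1}+q\varpi_n)$ and checking it there; it is immediate for $\calB(p\varpi_{n-1})$ from the one-row tableau model, and the two-row case is a short explicit computation. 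Granting it, combining the three cases gives $Z(f_n(T))=Z(T)$, and with the $W$-invariance this shows that $Z$ is constant on atoms. The only genuinely delicate point is this rank-two identity; everything else is bookkeeping, resting on the $W$-equivariance of the twisted operators recorded in \Cref{mcoSec}.
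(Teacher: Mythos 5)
Your reformulation $Z(T)=\tfrac12\sum_{\alpha\in\Phi_+}\bigl(\eps_\alpha(T)+\phi_\alpha(T)\bigr)$ and the reduction, via \Cref{introthm1}, to invariance under each $s_i$ and under $f_n$ are both correct and match the paper's strategy. The gap is in the $W$-invariance step: the asserted equivariance, that $s_i$ carries the $\alpha$-string through $T$ to the $s_i(\alpha)$-string through $s_i(T)$, is false for the roots whose \emph{largest} simple-root index is moved by $s_i$. Indeed $f_{\alpha_{j,k}}$ is by definition a conjugate of $f_k$, and $vf_kv^{-1}=f_{v(\alpha_k)}$ only for $v\in\langle s_1,\dots,s_k\rangle$; when $s_i(\alpha_{j,i})=\alpha_{j,i-1}$ the anchor index drops from $i$ to $i-1$, and $s_if_{\alpha_{j,i}}s_i\neq f_{\alpha_{j,i-1}}$ in general. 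Concretely, in the adjoint crystal $\calB(\varpi_1+\varpi_2)$ of type $A_2$, let $T=f_1f_2(v)$ be the weight-zero element with $\eps_2(T)=\phi_2(T)=0$. Then $s_2(T)=T$, the $(\alpha_1+\alpha_2)$-string through $T$ is a singleton (length $0$), while the $\alpha_1$-string through $s_2(T)=T$ has length $2$; so string lengths are not matched under $\alpha\mapsto s_2(\alpha)$. The same false identity, $\eps_{\alpha_{n-1}+\alpha_n}+\phi_{\alpha_{n-1}+\alpha_n}=(\eps_{n-1}+\phi_{n-1})\circ s_n$, reappears in your reduction of the $f_n$-step to the ``palindrome'' property of $\alpha_{n-1}$-string lengths; the correct relation is $\eps_{\alpha_{n-1}+\alpha_n}+\phi_{\alpha_{n-1}+\alpha_n}=(\eps_{n}+\phi_{n})\circ s_{n-1}$, since $f_{\alpha_{n-1}+\alpha_n}=s_{n-1}f_ns_{n-1}$.

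What is true, and what the paper's proof uses, is a statement about pairs. For the pairs $\{\alpha_{i,k},\alpha_{i+1,k}\}$ with common right endpoint, the string functions are literally exchanged by $s_i$, so your equivariance holds there. For the pairs $\{\alpha_{j,i},\alpha_{j,i-1}\}$ only the \emph{sum} of the two string lengths is preserved under $s_i$, and proving this is a genuine rank-two fact, namely \Cref{21explicit} (constancy of $\eps_{i-1}+\phi_{i}$ along $(\alpha_{i-1}+\alpha_i)$-strings, applied to $T'$ and $s_{i-1}s_is_{i-1}(T')$, which lie on the same such string). Your target rank-two identity for the $f_n$-step --- constancy of $\eps_{n-1}+\phi_{n-1}+\eps_{\alpha_{n-1}+\alpha_n}+\phi_{\alpha_{n-1}+\alpha_n}$ along $\alpha_n$-strings --- is in fact correct and equivalent to the instance of \Cref{21explicit} the paper uses, so that half of your argument can be repaired by proving it directly by the explicit string computation you sketch. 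But the $W$-invariance half is not mere bookkeeping: it needs the analogous pairwise correction and a second appeal to the same rank-two lemma.
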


We fix a family of cocharacters $\eta(t)$ going from the MV region to the KL region which crosses all the walls in a sort of lexicographic order (see \Cref{inductive}).
Whenever the family $\eta(t)$ crosses a wall $H_{\alpha^\vee}$, we change the orientation of the edges of the Bruhat graph labeled by $\alpha^\vee$. This procedure transforms the Bruhat graph into the \emph{twisted Bruhat graph} studied by Dyer \cite{DyeHecke}. The crucial observation is \Cref{Gammam}.

\begin{introprop}\label{propindegrees}
 Let $\lambda\in X_+$, $m\in \bbN$ and let $\Gamma^m_\lambda$ denote the twisted Bruhat graph whose vertices are the weight smaller than $\lambda$, obtained after changing the orientation of the edges corresponding to the first $m$ walls. Then the difference of the in-degrees  in $\Gamma_\lambda^m$ between the vertices of the edges that we reverse is always $1$.
\end{introprop}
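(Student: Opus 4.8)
The plan is to prove the statement one reversed edge at a time, using the affine reflection attached to the $m$-th wall as a symmetry relating the two endpoints. Write $\gamma=\gamma_m$ for the $m$-th affine real root whose wall is crossed, $s=s_\gamma$ for the associated affine reflection, and let $\{t^\mu,t^\nu\}$ with $\nu=s(\mu)$ be one of the edges reversed at the $m$-th step; we must show that the in-degrees of $t^\mu$ and $t^\nu$ in $\Gamma^m_\lambda$ differ by $1$. First I would record a concrete model for the in-degree. Pick a cocharacter $\eta\in X_\bullet(\affT)\otimes\bbR$ lying in the chamber entered just after crossing $H_\gamma$, and so close to $H_\gamma$ that $0<\langle\eta,\gamma\rangle$ is small compared with $|\langle\eta,\beta\rangle|$ for all the other walls met along the family; then $\Gamma^m_\lambda$ is the orientation that $\eta$ induces on the moment graph of $\overline{\Gr^\vee_\lambda}$ (the re-orientations at the first $m$ crossings being exactly the sign changes of $\langle\eta,-\rangle$), so the in-degree of $t^\mu$ counts the $\affT$-stable $\bbP^1$'s through $t^\mu$ with second fixed point a weight $\le\lambda$ that are $\eta$-attracted to $t^\mu$. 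Whether such a $\bbP^1$ is attracted to $t^\mu$ is read off from the sign of the pairing of $\eta$ with its $\affT$-weight, an affine real root, together with a combinatorial sign depending only on $\mu$ and that weight.

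Next I would exploit that $s$ is an automorphism of the affine root system of $\Gr^\vee$, hence of the full moment graph: it carries the edge at $t^\mu$ with weight $\beta$ to the edge at $t^\nu=s(\mu)$ with weight $s\beta$, and it fixes the edge $\{t^\mu,t^\nu\}$. Split the neighbours $t^\rho$ of $t^\mu$ among the weights $\le\lambda$ into those with $s(\rho)\le\lambda$ ("transferring", carried bijectively by $s$ onto the transferring neighbours of $t^\nu$) and the rest. The transferring edges other than $\{t^\mu,t^\nu\}$ should contribute equally to the two in-degrees, since I will show their $\Gamma^m_\lambda$-orientation relative to $t^\mu$ matches that of their $s$-image relative to $t^\nu$; the edge $\{t^\mu,t^\nu\}$ itself, having just been reversed, points into exactly one of $t^\mu,t^\nu$ and so contributes $1$ to one in-degree and $0$ to the other; and it remains to see that the non-transferring edges also contribute equally. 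For that last point I would use that $s$ acts on the polytope $\mathrm{conv}(W\lambda)$ (whose lattice points are the weights $\le\lambda$) by the translation $t_{-k\bar\gamma^\vee}$, where $\gamma=\bar\gamma+k\delta$, because the linear part of $s$ lies in $W$ and fixes that polytope; thus the set of non-transferring neighbours of $t^\mu$ and of $t^\nu$ is governed by how this one translation pushes $\mathrm{conv}(W\lambda)$ off itself, which is the same seen from $\mu$ and from its mirror image $\nu$, and combined with the sign comparison below this forces the two contributions to coincide. Granting these points, $\mathrm{indeg}_{\Gamma^m_\lambda}(t^\mu)-\mathrm{indeg}_{\Gamma^m_\lambda}(t^\nu)$ collapses to the contribution of the single edge $\{t^\mu,t^\nu\}$, which is $\pm1$.

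The hard part will be the compatibility of orientations on transferring edges. For a transferring edge with weight $\beta\neq\pm\gamma$ one has $\langle\eta,s\beta\rangle=\langle s\eta,\beta\rangle$ with $s\eta-\eta\in\bbR\gamma$ and $\langle\eta,\gamma\rangle$ tiny, so $\langle\eta,\beta\rangle$ and $\langle\eta,s\beta\rangle$ have the same sign and also the combinatorial signs match (they depend only on the cosets of $\beta,s\beta$ and on $\mu,\nu=s(\mu)$); what is genuinely delicate is that the reversals performed at the first $m$ wall crossings affect an edge $\{t^\mu,t^\rho\}$ and its $s$-image $\{t^\nu,t^{s\rho}\}$ in a compatible way. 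This is \emph{not} because $\{\gamma_1,\dots,\gamma_m\}$ is stable under $s$, for it is not; it has to be extracted from the prescribed lexicographic order in which the walls are crossed (\Cref{inductive}). Concretely, the set $\{\gamma_1,\dots,\gamma_m\}$ of roots crossed so far is biclosed in the affine root system (it is the set of walls separating the chamber of $\eta(0)$ from the chamber of $\eta$), and $\gamma=\gamma_m$ is its "last" root; this is exactly what is needed to make the $\gamma$-labelled edges into covering relations of the twisted Bruhat order of \cite{DyeHecke} attached to $\{\gamma_1,\dots,\gamma_m\}$. In fact the cleanest way to organise the endgame is to identify $\mathrm{indeg}_{\Gamma^m_\lambda}$ with the rank function of that twisted Bruhat order, restricted to the order ideal of weights $\le\lambda$, so that the Proposition becomes the statement that the reversed edges are covering relations — which is precisely the property that the lexicographic crossing order is designed to guarantee, and from which the in-degrees at the two ends jump by exactly $1$.
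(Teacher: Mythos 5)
Your overall architecture is the right one and matches the paper's: compare the in\nobreakdash-degrees of $\mu$ and $t\mu$ (where $t=s_\gamma$ is the reflection of the wall being crossed) by a bijection on incoming edges that is ``conjugation by $t$'' wherever possible, with the single edge $\{\mu,t\mu\}$ accounting for the difference of $1$. The transferring part of your bijection is essentially \Cref{arrowsX}, although your justification that ``the combinatorial signs match'' is an assertion rather than a proof: the edge into $\mu$ labelled $\alpha_r^\vee$ is carried to the edge into $t\mu$ labelled $\underline{t}(\alpha_r^\vee)$, and comparing their orientations in $\Gamma^m$ requires comparing both the untwisted Bruhat orientations \emph{and} membership of the two (different) labels in $N(y_m)$; this is what the lifting property (Property Z) does in \Cref{arrowsX}. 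That part is recoverable. The genuine gaps are elsewhere.

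First, your treatment of the non-transferring neighbours --- those $\rho\le\lambda$ with $t\rho\not\le\lambda$, or vice versa --- is where all the difficulty of the statement lives, and your argument for it does not work. You claim $t$ acts on $\mathrm{conv}(W\lambda)$ by a translation; it does not: $s_{c\delta-\beta^\vee}$ acts on $X_\bbR$ as an affine \emph{reflection} (linear part $s_\beta$ composed with a translation along $\beta$), so ``how the translation pushes the polytope off itself looks the same from $\mu$ and from $\nu$'' is not available. More tellingly, the conclusion you want (that an edge into $\mu$ lost upon restriction to $I(\lambda)$ is exactly compensated by an edge into $t\mu$ that has no preimage under conjugation) is \emph{false outside type $A$} --- the paper's remark after \Cref{Gammam} gives $\lambda=2\varpi_1+2\varpi_2$ in $B_2$ as a counterexample --- so any correct proof must use type $A$ in an essential way, and nothing in your sketch does. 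The paper's \Cref{reflections} supplies the missing content: one locates a supporting facet hyperplane $w(H_i)$ of $P_\lambda$ separating the lost weight from the retained ones, proves the type-$A$-specific claim that $\beta_r\in w(H_i)$ (this fails already in $D_4$), and then carefully tracks which of $\alpha_r^\vee,\alpha_t^\vee,\alpha_{trt}^\vee$ lie in $N(y_m)$ to get the required orientations $rt\mu<_m t\mu$ and $trt\mu<_m\mu$. The resulting bijection sends $r\mu\mapsto rt\mu$ (not $tr\mu$) for such ``$t$-reversing'' reflections. Second, your proposed endgame --- identifying $\mathrm{indeg}_{\Gamma^m_\lambda}$ with the rank function of Dyer's twisted order restricted to $I(\lambda)$, so that the claim reduces to the reversed edges being covering relations --- is unfounded: the in-degree equals the twisted length only for the \emph{unrestricted} graph $\Gamma^m_X$ (\Cref{Xorder}), and $I(\lambda)$ is not a lower order ideal for $<_m$, which is precisely why the restricted statement is nontrivial and why the modified bijection is needed.
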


\subsection*{Swapping functions and a new formula for the charge statistic}
 
 For the family of cocharacters $\eta(t)$ above, we can explicitly construct the swapping function in terms of the operators $e_\alpha$ (see \Cref{eswapping2} for a precise formulation). We denote by $<$ the Bruhat order on weights.

 \begin{introprop}
 Let $\alpha^\vee=c\delta-\beta^\vee$ the positive real root corresponding to a reflection $t$. Let $\mu\in X$ such that $\mu<t\mu\leq \lambda$. We define
 \begin{equation}\label{sfintro} \psi_{t\mu}:\calB(\lambda)_{t\mu}\ra 
 \calB(\lambda)_{\mu},\qquad
 \psi_{t\mu}(T)=
 e_\beta^{\langle \mu,\beta^\vee	\rangle +c}(T).\end{equation}
 Then, $\psi=\{\psi_\nu\}$ is a swapping function corresponding to the wall $H_{\alpha^\vee}$.
\end{introprop}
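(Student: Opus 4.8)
The plan is to verify the two defining properties of a swapping function from \cite[Definition 2]{ChargeGen}: first, that $\psi_{t\mu}$ is a bijection between the weight spaces $\calB(\lambda)_{t\mu}$ and $\calB(\lambda)_{\mu}$; and second, that it interacts correctly with the recharge statistic so that \eqref{rstat} is preserved as the family $\eta(t)$ crosses the wall $H_{\alpha^\vee}$. The key inputs I expect to need are: the twisted crystal operators $e_\beta,f_\beta$ and their string structure (from \Cref{mcoSec}); the interpretation of $\eps_\beta$ as the length of an $e_\beta$-string; the behaviour of the atomic number $Z$ under the operators $e_\alpha$; and \Cref{propindegrees}, which controls the combinatorial geometry of the twisted Bruhat graph exactly at the reversed edge.

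**Bijectivity.** First I would check that $\psi_{t\mu}$ is well-defined and bijective. Since $\mu < t\mu$, the reflection $t$ associated to $\alpha^\vee = c\delta - \beta^\vee$ acts on the weight $t\mu$ by subtracting a positive multiple of $\beta$, namely $t\mu - \mu = (\langle \mu,\beta^\vee\rangle + c)\beta$ — this is where the exponent $\langle\mu,\beta^\vee\rangle + c$ in \eqref{sfintro} comes from, as it is precisely the amount by which one must move down the $\beta$-string to pass from $t\mu$ to $\mu$. I would argue that for $T \in \calB(\lambda)_{t\mu}$ with $t\mu \leq \lambda$, the element $e_\beta^{\langle\mu,\beta^\vee\rangle+c}(T)$ is nonzero and lands in $\calB(\lambda)_\mu$. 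The cleanest route is to show that every $\beta$-string meeting the weight space $\calB(\lambda)_{t\mu}$ extends at least $\langle\mu,\beta^\vee\rangle + c$ steps in the $e_\beta$ direction; this is a standard consequence of the $\mathfrak{sl}_2$-theory of crystal strings together with $t\mu \leq \lambda$ (the string cannot terminate before reaching the mirror-image weight $\mu$ on the other side of the fixed hyperplane of $t$). The inverse map is then $f_\beta^{\langle\mu,\beta^\vee\rangle+c}$ restricted appropriately, giving the bijection.

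**The swapping property.** The heart of the argument is to show that $\psi_{t\mu}$ implements the correct modification of the recharge: when the family crosses $H_{\alpha^\vee}$, the edges labelled $\alpha^\vee$ in the Bruhat graph reverse orientation, and \Cref{propindegrees} tells us that the in-degree in $\Gamma^m_\lambda$ jumps by exactly $1$ across such an edge $\{\mu, t\mu\}$. By the inductive description of the recharge attached to $\eta(t)$, this means the statistic on the fibre over $t\mu$ must be shifted relative to the fibre over $\mu$ by exactly the ``length'' recorded by the relevant $e$-operator exponent — and $\psi_{t\mu}(T) = e_\beta^{\langle\mu,\beta^\vee\rangle+c}(T)$ is precisely the map matching each $T$ over $t\mu$ with the element over $\mu$ whose recharge differs by the prescribed amount. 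Concretely I would compare $\grdim \HL^{\eta(t)}_{t\mu}(\IC_\lambda)$ before and after the crossing against $\grdim \HL^{\eta(t)}_{\mu}(\IC_\lambda)$, using the known behaviour of hyperbolic localization under wall-crossing from \cite[\S 2.4]{ChargeGen}, and check that conjugating by $\psi_{t\mu}$ on the tableau side reproduces exactly this shift. The atomic number $Z$ and its constancy on atoms should enter to control the $q$-degree bookkeeping globally, ensuring the local shifts glue to a consistent statistic.

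**Main obstacle.** The principal difficulty will be the second property: pinning down the exact power of $q$ by which the recharge must change at the crossing and matching it with the exponent $\langle\mu,\beta^\vee\rangle + c$. This requires carefully reconciling three different indexings — the geometric one (grading shifts in hyperbolic localization across the wall $H_{\alpha^\vee}$, governed by the in-degree jump of \Cref{propindegrees}), the combinatorial one (lengths of $\beta$-strings in $\calB(\lambda)$), and the inductive recipe defining $r(t,-)$ — and showing they agree edge-by-edge for every reversed edge simultaneously. Verifying that the map is compatible not just with one reversed edge but with the whole wall $H_{\alpha^\vee}$ at once (so that the resulting $r(t,-)$ is well-defined on all of $\calB(\lambda)$) is the subtle point; the remaining checks (well-definedness, bijectivity, the $\mathfrak{sl}_2$-string estimates) I expect to be routine given the machinery set up earlier in the paper.
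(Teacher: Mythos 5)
Your proposal names the right ingredients (\Cref{propindegrees}, the constancy of the atomic number, $\beta$-strings), but the two places where the actual work happens are not carried out correctly, and one of them rests on a false claim.

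First, well-definedness. You assert that $e_\beta^{\langle\mu,\beta^\vee\rangle+c}(T)\neq 0$ because ``the string cannot terminate before reaching the mirror-image weight $\mu$ on the other side of the fixed hyperplane of $t$'' by $\mathfrak{sl}_2$-theory. This is not true: the $\beta$-string through $T$ is symmetric about the \emph{finite} hyperplane $\langle -,\beta^\vee\rangle=0$, not about the affine hyperplane of $t=s_{c\delta-\beta^\vee}$, so $\mathfrak{sl}_2$-theory gives no control at the weight $\mu=t\mu+(\langle\mu,\beta^\vee\rangle+c)\beta$. The correct mechanism is to restrict to the LL atom $\calA$ containing $T$, of highest weight $\lambda'$: since $\beta\in\Phi_+\setminus\Phi_{n-1}$, \Cref{fandatoms} shows $e_\beta$ preserves $\calA$ and $e_\beta^k(T)\neq 0$ exactly when $\wt(T)+k\beta\leq\lambda'$, which holds here because $\mu<t\mu\leq\lambda'$. (Also, $\psi_{t\mu}$ is only required to be, and only is, injective; it is not a bijection of weight spaces in general.)

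Second, and more seriously, the verification of the swapping property \eqref{psiintro} is missing. The condition to check is the exact identity $r(\eta,\psi_{t\mu}(T))=r(\eta,T)-1$ for the known recharge $r(\eta,-)$ on the near side of the wall; your plan to ``compare $\grdim\HL^{\eta(t)}_{t\mu}(\IC_\lambda)$ before and after the crossing'' is circular, since the general theory of \cite{ChargeGen} already guarantees that \emph{some} swapping function exists — the content of the proposition is that \emph{this} map is one. The paper proves this by establishing simultaneously, by downward induction on $m$ starting from the parabolic region, the explicit formula $r(\eta_m,T)=\shift(T)-\Arr_m(T)$, where $\Arr_m(T)$ is the in-degree of $\wt(T)$ in the twisted graph $\Gamma^m_{\lambda'}$ of the \emph{atom}, not of $\Gamma^m_\lambda$. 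Then \Cref{propindegrees} applied to $\Gamma^m_{\lambda'}$ gives $\Arr_m(\psi_{t\mu}(T))=\Arr_m(T)-1$, the wall-crossing recursion \eqref{Arrm+1T} converts this into $\Arr_{m+1}(\psi_{t\mu}(T))=\Arr_{m+1}(T)+1$, and constancy of $\shift$ on the atom yields $r(\eta_{m+1},\psi_{t\mu}(T))=r(\eta_{m+1},T)-1$. Without the explicit inductive formula for the recharge and the restriction to a single atom, your argument cannot pin down the required shift of exactly $1$ (the exponent $\langle\mu,\beta^\vee\rangle+c$ is only the number of steps along the $\beta$-string between the two weights; it plays no role in the size of the recharge shift).
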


 To prove this, we restrict to a single atom and apply \Cref{propindegrees} after translating it into a problem on twisted Bruhat graphs.
 
We then use the swapping functions in \eqref{sfintro} to give a new explicit formula for the charge statistic on $\calB(\lambda)$. 
This is the main result of this paper (cf. \Cref{main}).
\begin{introthm}\label{intromain}
	The function 
\[ c(T) = Z(T)-\frac12\ell(\wt(T))\]
is a charge statistic on $\calB(\lambda)$, where $\ell$ denotes the Bruhat length on $X$. In particular, if $\wt(T)$ is a dominant weight, we have 
\[ c(T) = \sum_{\alpha\in \Phi_+}\eps_\alpha(T). \]

\end{introthm}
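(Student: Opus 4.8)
The plan is to compute the recharge statistic $r(t,-)$ of \eqref{rstat} for $t$ in the KL region and to identify it with the atomic number $Z$. First I record the reduction. Taking $t\gg 0$ in \eqref{rstat} and combining with the identity $K_{\lambda,\mu}(v^2)=\grdim\HL^{\eta(t)}_\mu(\IC_\lambda)\,v^{-\ell(\mu)}$ (with $q=v^2$) gives
\[
K_{\lambda,\mu}(q)=\sum_{T\in\calB(\lambda)_\mu} q^{\,r(t,T)-\frac12\ell(\mu)}\qquad (t\gg 0),
\]
so that $c(T)=r(t,T)-\tfrac12\ell(\wt(T))$ is a charge statistic (integrality and non‑negativity being automatic, since $K_{\lambda,\mu}(q)$ is a genuine polynomial), and the theorem becomes equivalent to the identity $r(t,T)=Z(T)$ for all $T\in\calB(\lambda)$ and $t\gg 0$. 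Granting this, the ``in particular'' clause is immediate: when $\mu=\wt(T)$ is dominant one has $\ell(\mu)=\sum_{\beta^\vee\in\Phi^\vee_+}\langle\mu,\beta^\vee\rangle=\langle\mu,2\rho^\vee\rangle$, so $c(T)=Z(T)-\langle\wt(T),\rho^\vee\rangle=\sum_{\alpha\in\Phi_+}\eps_\alpha(T)$ by the definition of $Z$.

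To prove $r(t,T)=Z(T)$ I would follow the family $\eta(t)$ from the MV region to the KL region, crossing walls in the order of \Cref{inductive}. In the MV region $\HL^{\eta(0)}_\mu(\IC_\lambda)$ is concentrated in a single degree, so $r(0,-)$ is constant on each weight space, with common value $r(0,T)=\langle\wt(T),\rho^\vee\rangle$. Thus what must be shown is that the successive wall crossings add to $r$, at the element $T$, exactly $\sum_{\alpha\in\Phi_+}\eps_\alpha(T)$. When the family crosses a wall $H_{\alpha^\vee}$ with $\alpha^\vee=c\delta-\beta^\vee$ and associated reflection $t$, the defining property of the swapping function $\psi=\{\psi_\nu\}$ of \eqref{sfintro} prescribes the update of $r$: for each $\mu$ with $\mu<t\mu\le\lambda$ the new value on $\calB(\lambda)_{t\mu}$ is obtained from the old value on $\calB(\lambda)_\mu$ transported along $\psi_{t\mu}$, with the appropriate shift in $q$-degree, while $r$ is unchanged on the remaining weight spaces. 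Iterating over the whole sequence of walls writes $r(t,T)$, for $t\gg 0$, as $\langle\wt(T),\rho^\vee\rangle$ plus a sum of shifts controlled by the operators $e_\beta^{\langle\mu,\beta^\vee\rangle+c}$.

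The crux of the argument is the combinatorial identity that this accumulated sum of shifts equals $\sum_{\alpha\in\Phi_+}\eps_\alpha(T)$. Here I would use the atomic decomposition of \Cref{introthm1}: since $Z$ is constant on atoms (\Cref{atomicthm}) and the family $\eta(t)$, together with the swapping functions of \eqref{sfintro}, respects the decomposition into atoms, it suffices to establish the identity on a single atom at a time. On a fixed atom the count of wall-crossing shifts can be translated, as in the proof that $\psi$ is a swapping function, into a statement about in-degrees in the twisted Bruhat graphs $\Gamma^m_\lambda$; grouping the crossed walls according to the root $\beta$, and using that in the order of \Cref{inductive} the $\beta$-walls are treated consecutively, one finds that these walls make $T$ climb its (twisted) $e_\beta$-string, the accumulated shifts summing to $\eps_\beta(T)$ by repeated use of \Cref{propindegrees}. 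Summing over $\beta\in\Phi_+$ then gives $r(t,T)=Z(T)$. The hard part is exactly this accumulation: one must show that the large powers $e_\beta^{\langle\mu,\beta^\vee\rangle+c}$ assemble, across all the $\beta$-walls and over the distinct roots, into the single quantity $\eps_\beta(T)$, and it is the in-degree statement of \Cref{propindegrees}, together with the specific lexicographic order of \Cref{inductive}, that makes this bookkeeping close up.
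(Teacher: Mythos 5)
Your overall strategy is the paper's: start from the (unique) recharge in the MV region, cross walls along the family of \Cref{inductive}, use the swapping functions \eqref{sfintro}, restrict to atoms where $Z$ is constant, and convert the bookkeeping into in-degree counts in twisted Bruhat graphs via \Cref{propindegrees}. The reduction to ``recharge in the KL region $=$ atomic number up to a length shift'' and the derivation of the dominant-weight formula are also exactly as in the paper (modulo a sign convention for the renormalization, which you use self-consistently).

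There is, however, a genuine structural gap: you treat the entire passage from the MV region to the KL region as a single sweep governed by \eqref{sfintro} and \Cref{propindegrees}. But those two ingredients only apply to the walls $H_{m\delta-\beta^\vee}$ with $\beta=\alpha_{k,n}$, i.e.\ the roots in $N(y_\infty)$ for the infinite word $y_\infty=\undc\,\undc\cdots$; the twisted graphs $\Gamma^m_\lambda$ of \Cref{propindegrees} are twisted only along such roots, and the swapping function $e_\beta^{\langle\mu,\beta^\vee\rangle+c}$ is only shown to preserve atoms (via \Cref{fandatoms}) for $\beta\notin\Phi_{n-1}$. The walls with $\beta\in\Phi_{n-1}$ are handled in the paper by induction on the rank: one first travels from the MV region to the $(n-1)$-th parabolic region by Levi branching to $\Phi_{n-1}$ and invoking the theorem in rank $n-1$ (Lemma~\ref{rparabolic}, Corollary~\ref{Arrinf}); this supplies the anchor value $r(\eta_M,T)=Z(T)-\Arr_\infty(T)$ from which the reverse induction over the remaining walls (Proposition~\ref{eswapping2}) starts. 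Without this recursion your accumulation argument does not close up, since you have no swapping function and no in-degree statement for the omitted walls. A smaller inaccuracy: in the order of \Cref{inductive} the walls for a fixed $\beta$ are \emph{not} consecutive (the order is $H_{N\delta-\alpha_n^\vee},H_{N\delta-\alpha_{n-1,n}^\vee},\dots$, interleaving the various $\beta$ at each level $m\delta$), and the per-root contribution is $\phi_\beta(T)-\ell^\beta(\wt(T))$ rather than $\eps_\beta(T)$; these discrepancies only cancel after summing over all $\beta$ and adding the MV value.
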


This gives a very concise description of the charge. However, 
we regard as the most interesting aspect of this result the fact that this formula is established by exploiting the connection with the geometry of the affine Grassmannian.  
We remark in fact that our proof of \Cref{intromain} is independent from previous proofs \cite{LSSur,LLTCrystal,NYKostka} (for example, the cyclage operation on tableaux does not play any role here). Nevertheless, in \Cref{chargecoincide} we check that our charge statistic coincides with Lascoux--Sch\"utzenberger's one. 

\subsection*{Open questions and future work}

In a joint project with J. Torres, we developed a similar strategy in type $C$, determining an atomic decomposition and defining swapping functions. This work led to a new formula for the charge statistic in type $C$ in rank $2$ \cite{PTAtoms}, a case for which a charge was not known previously.
 Further investigations into type $C$ in higher ranks are currently ongoing.

We hope these methods will shed light on the combinatorics of Kazhdan--Lusztig polynomials in affine type. Even in type $A$, we are only able to produce a charge statistic starting from a specific family of cocharacters. Considering other families might provide further insights into the combinatorics of Kazhdan–Lusztig polynomials in affine type. Additionally, it would be interesting to explore connections between these combinatorial interpretations and the new combinatorial formulas for Kazhdan–Lusztig polynomials proposed in \cite{PatCombinatorial} and \cite{BBD+Towards}.

Although they are motivated and informed by the geometry of the affine Grassmannian, the results in this paper are stated in purely combinatorial terms, relying on the structure of the crystal of the representation. It seems natural to lift this process to a more  geometric level, identifying an analogue of the swapping function for MV cycles and/or MV polytopes. We believe that, by applying an analogue of the crystalline procedure outlined in this paper, it should be possible to construct distinguished bases for the intersection cohomology of Schubert varieties in the affine Grassmannian. Such an approach would yield results analogous to those obtained in \cite{PatBases} for the finite Grassmannian.

\subsection*{Acknowledgments} 
I am grateful to Mark Shimozono for pointing out the similarity and the possible connections with Nakayashiki--Yamada's description of the charge in terms of the energy function \cite{NYKostka}.

%

\section{Crystals and the atomic decomposition}\label{CrystalSection}

In this section, we recall the the atomic decomposition of crystals of type $A$ studied by Lecouvey and Lenart in type $A$ \cite{LLAtomic}. While they work only on the dominant part $\calB_+(\lambda)$ of the crystal, we use a version of the decomposition on the whole crystal, which has a particularly simple expression and it is easier to work with. We also introduce a function on the crystal, called the atomic number $Z$, which is constant on the atoms and that will play an important role in our construction of the charge.

\subsection{Notation}

Let $G$ be the reductive group $SL_{n+1}(\bbC)$ and let $T$ be the maximal torus consisting of diagonal matrices. We denote by $X$ the  character lattice of the torus  $T$, which is isomorphic to $\bbZ^{n}$. The elements of $X$ are called \emph{weights}. 
Let $\Phi\subset X$ be the root system of $G$. It is a root system of type $A_n$. Let $W$ denote its Weyl group, which is isomorphic to the symmetric group $S_n$. We denote by $\alpha_1,\ldots,\alpha_n$ its simple roots and write 
\[ \alpha_{j,k}:=\alpha_j+\ldots +\alpha_k.\]
Let $X_+\subset X$ be the subset of dominant weights.

For $\lambda\in X_+$ we denote by $L(\lambda)$ the corresponding irreducible representation of highest weight $\lambda$ and by $\calB(\lambda)$ the corresponding crystal graph.
There are several equivalent different constructions of $\calB(\lambda)$. 
For $T\in \calB(\lambda)$, we denote by $\wt(T)\in X$ the weight of $T$.

The crystal $\calB(\lambda)$ can be constructed algebraically using the quantum group of $G$ \cite{KasCrystal,LusCanonical}. A geometric construction of $\calB(\lambda)$ can be obtained via geometric Satake \cite{BGCrystals,BFGUhlenbeck}. In this case the elements of the crystal are the MV cycles and the crystal operators are induced by a geometric operation on cycles. It is shown in \cite{KamCrystal} that the algebraic and geometric constructions are equivalent. The crystal $\calB(\lambda)$ can also be constructed combinatorially in terms of semistandard Young tableaux.

\subsection{Modified Crystal Operators}\label{mcoSec}

For $1\leq i \leq n$, let $e_i$ and $f_i$ be the crystal operators on $\calB(\lambda)$.
 We call \emph{$\alpha_i$-string} of $T$ the set $\{ f_i^k(T)\}_{k\geq 0} \cup \{ e^k_i(T)\}_{k\geq 0}$.

On $\calB(\lambda)$ there exists an action of the Weyl group with the simple reflection $s_i$ acting by reversing the $\alpha_i$-strings. We have $\wt(w\cdot T)=w\cdot \wt(T)$.
 Using the action of $W$, we can associate, similarly to \cite{LLAtomic}, crystal operators to any positive roots.

\begin{definition}
	Let $\alpha=\alpha_{j,k}\in \Phi_+$ be a positive root. The \emph{modified crystal operators} $f_\alpha$ and $e_\alpha$ are defined respectively as $wf_kw^{-1}$ and $w e_k w^{-1}$, where $w=s_js_{j+1}\ldots s_{k-1}$.
\end{definition}
\begin{example}
	For any $i$, we have $f_{\alpha_i+\alpha_{i+1}}=s_if_{i+1}s_i$.
\end{example}

If $f_{\alpha}(T)\neq 0$, we have $\wt(f_{\alpha}(T))=\wt(T)-\alpha$. 
Moreover, we can have $f_{\alpha}(T)=0$ only if $\langle \wt(T),\alpha^\vee\rangle \leq 0$.

\begin{definition}	
For any $T\in \calB(\lambda)$ we define \[\phi_\alpha(T):=\max \{ N \mid f_\alpha^N(T)\neq 0\}\qquad \text{and}\qquad \eps_\alpha(T) := \max \{N \mid e_\alpha^N(T)\neq 0\}.\]
	 We call \emph{$\alpha$-string} of $T$ the set $\{ f_\alpha^k(T)\}_{k\geq 0} \cup \{ e^k_\alpha(T)\}_{k\geq 0}$.
\end{definition}

For any $i$, we have $f_i=f_{\alpha_i}$ so we simply write $\phi_i$ and $\eps_i$ instead of $\phi_{\alpha_i}$ and $\eps_{\alpha_i}$.
 If $\alpha=\alpha_{j,k}$ we have $\phi_{\alpha}=\phi_k \circ w^{-1}$, where $w=s_j\ldots s_{k-1}$.
Moreover, we have $\phi_\alpha= \eps_\alpha \circ s_\alpha$ and 
\begin{equation}\label{phi-eps}
\phi_\alpha(T)-\eps_\alpha(T)=\langle \wt(T), \alpha^\vee\rangle.
\end{equation}
Notice that if $f_\alpha=wf_k w^{-1}$, then $s_\alpha=w s_k w^{-1}$, so $s_\alpha(T)$ belongs to the $\alpha$-string of $T$.

\begin{remark}
	Let $\alpha=\alpha_{j,k}$. Then $f_\alpha=vf_kv^{-1}$ for any $v\in \langle s_1,\ldots ,s_k\rangle$ such that $v(\alpha_k)=\alpha$ (cf. \cite[Remark 5.1]{LLAtomic}). In fact, if $w=s_js_{j+1}\ldots s_{k-1}$ then $v^{-1}w\in \mathrm{Stab}(\alpha_k)\cap \langle s_1,\ldots,s_k\rangle$, and therefore $v^{-1}w\in \langle s_1,\ldots ,s_{k-2}\rangle$.
	Then we have $v^{-1}f_\alpha v=(v^{-1}w)f_k(v^{-1}w)^{-1}=f_k$ since the operator $f_k$ commutes with $f_j$ (and thus with $s_j$) for any $j$ such that $\langle \alpha_j,\alpha_k^\vee\rangle =0$. 
\end{remark}

The following computation about $\alpha$-strings in rank $2$ will be needed later.
\begin{lemma}\label{21explicit}
	For any $i$ with $1\leq i\leq n-1$ the function $\eps_i+\phi_{i+1}$ is constant along $(\alpha_i+\alpha_{i+1})$-strings.
\end{lemma}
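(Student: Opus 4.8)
The statement only involves the operators $e_i,f_i,e_{i+1},f_{i+1}$ together with the full $\alpha_i$- and $\alpha_{i+1}$-strings through a vertex, all of which stay inside the subcrystal $B\subseteq\calB(\lambda)$ generated by $T$ under these operators. Since $\calB(\lambda)$ is (the crystal of) a representation, restricting to the $A_2$-Levi generated by $\alpha_i,\alpha_{i+1}$ decomposes $B$ into a disjoint union of connected crystals, each isomorphic to the irreducible $A_2$-crystal of a dominant weight $(a,b)$; and the values of $\eps_i$ and $\phi_{i+1}$ computed in $B$ agree with those computed in $\calB(\lambda)$. So it suffices to prove the lemma for $n=2$, with $\alpha=\alpha_{1,2}=\alpha_1+\alpha_2$.

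\textbf{Using the factorization $f_\alpha=s_1f_2s_1$.} From the definition of the modified operators one has $f_\alpha^k=(s_1f_2s_1)^k=s_1f_2^ks_1$. Pick the unique $\alpha$-highest vertex $T_0$ of an $\alpha$-string (so $\eps_\alpha(T_0)=0$) and put $S:=s_1T_0$; then $e_2S=0$ and the string is $\{T_k=s_1f_2^kS\}_{0\le k\le\phi_\alpha(T_0)}$. Since $s_1$ exchanges $\eps_1$ with $\phi_1$, we get $\eps_1(T_k)=\phi_1(f_2^kS)$, while $\phi_2(T_k)=\phi_2(s_1f_2^kS)$. Thus the lemma becomes the claim that $k\mapsto\phi_1(Y_k)+\phi_2(s_1Y_k)$ is constant along the $\alpha_2$-string $Y_k:=f_2^kS$.

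\textbf{The single-string computation.} I would verify this claim by analysing the two summands step by step along the $\alpha_2$-string, which is a purely rank-two bookkeeping. Using \eqref{phi-eps} and $\wt(Y_k)=\wt(S)-k\alpha_2$, all the relevant pairings are linear in $k$: with $p_0=\langle\wt S,\alpha_1^\vee\rangle$ and $q_0=\langle\wt S,\alpha_2^\vee\rangle=\phi_2(S)$ one has $\langle\wt Y_k,\alpha_1^\vee\rangle=p_0+k$, $\langle\wt Y_k,\alpha_2^\vee\rangle=q_0-2k$, $\eps_2(Y_k)=k$, and $\langle s_1\wt Y_k,\alpha_2^\vee\rangle=p_0+q_0-k$. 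Along an $\alpha_2$-string the function $\phi_1$ evolves in the standard "decrease, then stay/increase" pattern forced by the $A_2$ Stembridge relations, and $\phi_2(s_1Y_k)$ is recovered from $\eps_2(s_1Y_k)$ and the pairing just computed; one then checks that at every step the increment of $\phi_1(Y_k)$ is cancelled by that of $\phi_2(s_1Y_k)$. Alternatively, one can sidestep the abstract argument by realizing the irreducible $A_2$-crystal as two-row semistandard tableaux over $\{1,2,3\}$ and reading $\eps_1,\phi_2$ directly off the signature rule along the $\alpha$-string $s_1f_2^ks_1$.

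\textbf{Expected obstacle.} Away from the boundary of the crystal both summands are affine-linear in $k$ and the cancellation is immediate; the only real content is the boundary behavior, namely checking that when the $\alpha_1$-string through some $Y_k$ degenerates (length $0$ or $1$), or when $s_1Y_k$ reaches an end of its $\alpha_2$-string, the jumps of $\phi_1(Y_k)$ and of $\phi_2(s_1Y_k)$ still compensate exactly. These are finitely many configurations, and once they are dispatched the lemma follows.
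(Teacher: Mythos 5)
Your reduction to rank $2$ and the reformulation via $s_1$-conjugation are correct and match the spirit of the paper's proof (which also Levi-branches to $A_2$), but the proposal stops short of the actual content of the lemma. After setting $Y_k=f_2^kS$, everything hinges on showing that the increments of $\phi_1(Y_k)$ and of $\phi_2(s_1Y_k)$ cancel, and this is only asserted. The tools you invoke cannot deliver it: the Stembridge/$A_2$ local axioms control how $\eps_1,\phi_1$ change when $f_2$ is applied to $Y_k$, but they say nothing about how $\phi_2(s_1Y_k)$ changes, because $s_1Y_{k+1}=s_1f_2Y_k$ is not obtained from $s_1Y_k$ by a single crystal operator --- it is obtained by applying $s_1f_2s_1=f_{\alpha_1+\alpha_2}$, i.e.\ by the very operator whose interaction with $\eps_1,\phi_2$ the lemma is about. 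Likewise, writing $\phi_2(s_1Y_k)=\eps_2(s_1Y_k)+p_0+q_0-k$ just trades one unknown for another, since $\eps_2\circ s_1$ along the $f_2$-string is equally non-local. So the "purely rank-two bookkeeping" you defer is not bookkeeping: it is the theorem, and the proposal as written is essentially circular at that point.

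To close the gap one needs an explicit rank-$2$ model in which both $f_{\alpha_1+\alpha_2}$ and the pair $(\eps_1,\phi_2)$ are computable. This is exactly what the paper does: it uses the string parametrization $\str(T)=(a,b,c)$ with $T=f_1^af_2^bf_1^cv$ and the piecewise-linear operators $\sigma_1,\theta,p_1$ of Bump--Schilling, proves that $f_{\alpha_1+\alpha_2}$ sends $(a,b,c)$ to $(a+1,b+1,c)$ or $(a,b+1,c+1)$ according to whether $a+c\geq N$ or $a+c<N$, and then evaluates $\eps_1+\phi_2$ on both outputs. Note also that the case distinction there is a genuine piecewise-linear branching governed by $a+c$ versus $N$, not merely a degeneration "at the boundary of the crystal" as your last paragraph suggests, so the finitely many exceptional configurations you plan to dispatch are not where the work lies. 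Your parenthetical alternative --- realizing the $A_2$ crystal by semistandard tableaux and tracking $\eps_1,\phi_2$ through $s_1f_2^ks_1$ via the signature rule --- would be a legitimate substitute for the paper's string-pattern computation, but it is not carried out, so as it stands the proof is incomplete.
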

\begin{proof}
	After Levi branching to $\langle \alpha_{i},\alpha_{i+1}\rangle \cu \Phi$, we can assume that $\calB(\lambda)$ is a crystal of type $A_2$. Let $\lambda=N\varpi_1+ N'\varpi_2$, with $N,N'\geq 0$. Recall from \cite[\S 11.1]{BSCrystal} that in type $A_2$ any $T\in \calB(\lambda)$ is uniquely determined by a triple of positive integers $\str(T):=(a,b,c)$ where $T=f_1^af_2^bf_1^c v$ and $v\in \calB(\lambda)$ is the unique element of weight $\lambda$.
	
	Let $T\in \calB(\lambda)$ be such that $\str(T)=(a,b,c)$. From \cite[eq. (11.10)]{BSCrystal} we have $\str(s_1(T))=\sigma_1(a,b,c)$, where \[\sigma_1(a,b,c):=(N+b-a-2c,b,c),\] and $\str(f_2(T))=\theta p_1 \theta(a,b,c)$, where \[\theta(a,b,c):=(\max(c,b-a),a+c,\min(b-c,a))\] and $p_1(a,b,c)=(a+1,b,c)$.
	From this it follows that 
	\[\str(f_{\alpha_1+\alpha_2}(T))=\sigma_1\theta p_1\theta\sigma_1(\str(T)).\] 
	We claim that
	\[\str(f_{\alpha_1+\alpha_2}(T))=\begin{cases}
	(a+1,b+1,c) & \text{if }a+c\geq N\\
	(a,b+1,c+1) & \text{if }a+c< N.
	\end{cases}\]
	Assume that $a+c\geq N$. Let $d:=N+b-a-2c$, so $d\leq b-c$. Then, we have
	\begin{align*}
	p_1\theta \sigma_1 (a,b,c)=&(\max(c,b-d)+1,d+c,\min(b-c,d))=\\
	=&\left(\max(c,b-d+1),d+c,\min(b-c+1,d)\right)= \theta \sigma_1(a+1,b+1,c)
	\end{align*}
	and the claim follows.	The case $a+c<N$ is similar.
	
	To conclude the proof we need to evaluate $\eps_1+\phi_2$ on the $(\alpha_1+\alpha_2)$-string of $T$. We have $\eps_1(T)=a$ and 
	\[\phi_2(T)=\eps_2\sigma_2\theta(a,b,c)=N'+a+c-\max(c,b-a)-2\min(b-c,a).\]
	
	A simple computation allows us to conclude by checking that, for $T',T''\in \calB(\lambda)$ such that $\str(T')=(a+1,b+1,c)$ and $\str(T'')=(a,b+1,c+1)$, we have 
	\[(\eps_1+\phi_2)(T')=(\eps_1+\phi_2)(T)=(\eps_1+\phi_2)(T'').\qedhere\]	
\end{proof}

\subsection{The Atomic Decomposition of Crystals}
 Let $\lambda \in X_+$ and let $\calB(\lambda)$ denote the corresponding crystal. We recall from \cite{LLAtomic} 
 the definition of atomic decomposition.
 
 \begin{definition}
 	An \emph{atomic decomposition} of a crystal $\calB(\lambda)$ is a partition of its elements $\calB(\lambda)= \bigsqcup \calA_i$ such that each $\calA_i$ satisfies the following conditions. 
 	\begin{itemize}
 		\item $\wt(T)\neq \wt(T')$ for any $T,T'\in \calA_i$ such that $T\neq T'$,
 		\item the weights in $\calA_i$ form a lower interval in the Bruhat order, i.e. there exists $\mu \in X_+$ such that 
 		\[\{ \wt(T) \mid T \in \calA_i\}=\{ \nu \in X \mid \nu\leq \mu\}.\] 
 	\end{itemize}
		In this case, we say that $\calA_i$ is an \emph{atom of highest weight} $\mu$.
 \end{definition}
 
 Lecouvey and Lenart show that in type $A$ every crystal $\calB(\lambda)$ admits an atomic decomposition. Moreover, they provide an explicit construction of this decomposition that we now recall. They use a different version of modified crystal operators. 
 For $\alpha\in \Phi_+$ they define $\ftil_\alpha=w f_n w^{-1}$ and $\etil_\alpha=w e_n w^{-1}$ where $w \in W$ is such that $w(\alpha_n)=\alpha$. This is well defined by \cite[Remark 5.1]{LLAtomic}. In general, we have $f_\alpha\neq \tilde{f}_\alpha$ but $f_\alpha=\ftil_\alpha$ for $\alpha=\alpha_{k,n}$.
 We write $\ftil_i$ and $\etil_i$ resp. for $\ftil_{\alpha_i}$ and $\etil_{\alpha_i}$.

\begin{definition}\label{B+}
 	We define a directed graph $\bbB^+(\lambda)$ as follows
 \begin{itemize}
 	\item the vertices of $\bbB^+(\lambda)$ are the elements $T\in \calB(\lambda)$ such that $\wt(T)\in X_+$,
 	\item there is an edge $T\ra T'$ if $\tilde{f}_\alpha(T')=T$ for some $\alpha \in \Phi_+$.
 \end{itemize}	
 \end{definition}

Notice that \Cref{B+} is slightly different from the original definition of $\bbB^+(\lambda)$ given in \cite[\S 6]{LLAtomic}. Lecouvey and Lenart impose in fact an extra condition: they require that there is an edge in $\bbB^+(\lambda)$ between $T$ and $T'$ if and only if $\tilde{f}_\alpha(T')=T$ \emph{and} $\wt(T') \gtrdot \wt(T)$.\footnote{We use the notation $\mu \gtrdot \mu'$ for the covering relation in the dominance order of $X_+$. This means that $\mu > \mu'$ and there is no $\nu\in X_+$ with $\mu>\nu>\mu'$} However, since we are only working in type $A$ and we are only interested in the connected components of $\bbB^+(\lambda)$, this extra condition turns out to superfluous. In fact, as the second part of the next Lemma shows, two vertices are connected in $\bbB^+(\lambda)$ if and only if they are connected in Lecouvey--Lenart's version of $\bbB^+(\lambda)$.\footnote{This is however only true in type $A$. In other types, the two definitions may lead to graphs with different connected components, see \cite[Remark 6.1]{LLAtomic}.}
\begin{lemma}\label{abcommute}
	Let $\alpha,\beta,\gamma\in \Phi_+$ and $T,T'\in \calB^+(\lambda)$.
	\begin{enumerate}
		\item Assume that $\gamma=\alpha+\beta$,  that $\langle \wt(T),\alpha^\vee\rangle>0$ and  $\langle \wt(T),\beta^\vee\rangle>0$. Then 
	\begin{equation}\label{abc}\ftil_\gamma(T)=\ftil_{\beta}\ftil_{\alpha}(T)=\ftil_{\alpha}\ftil_{\beta}(T)\neq 0.
	\end{equation} 
	\item If $T\ra T'$ is an edge in $\bbB^+(\lambda)$ with $\tilde{f_\gamma}(T')=T$, then there exist $\beta_1,\ldots,\beta_r\in \Phi_+$ such that 
	\begin{enumerate}
		\item  $\gamma=\beta_1+\ldots +\beta_r$,
		\item $\ftil_{\beta_i}\ldots \tilde{f}_{\beta_1}(T')\in \bbB^+(\lambda)$ for any $i\leq k$ and $\tilde{f_\gamma}(T')=\tilde{f}_{\beta_r}\ldots \tilde{f}_{\beta_1}(T')=T$,
		\item $\wt(T') \gtrdot \wt(\ftil_{\beta_1}(T'))\gtrdot \wt(\ftil_{\beta_2}\ftil_{\beta_1}(T'))\gtrdot \ldots \gtrdot \wt(T)$.
	\end{enumerate}
\end{enumerate}
\end{lemma}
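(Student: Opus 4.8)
The plan is to establish part (1) by a reduction to rank $2$, and then to deduce part (2) by induction on the height of $\gamma$, using part (1) as the tool that lets one commute and regroup modified crystal operators.

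For part (1), note first that in type $A$ one has $\gamma^\vee=\alpha^\vee+\beta^\vee$ whenever $\gamma=\alpha+\beta$, so the hypotheses give $\langle\wt(T),\gamma^\vee\rangle=\langle\wt(T),\alpha^\vee\rangle+\langle\wt(T),\beta^\vee\rangle>0$ and hence $\ftil_\gamma(T)\neq 0$ (and likewise $\ftil_\alpha(T),\ftil_\beta(T)\neq 0$). The modified operators are $W$-equivariant, $\ftil_{u\delta}(uS)=u\,\ftil_\delta(S)$ for $u\in W$ mapping $\delta$ to a positive root, which is immediate from $\ftil_\delta=wf_nw^{-1}$ and the independence of this expression of the chosen $w$; the statement is also symmetric in $\alpha$ and $\beta$. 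Since $\{\pm\alpha,\pm\beta,\pm\gamma\}$ is an $A_2$-subsystem, we may therefore choose $u$ with $u\alpha=\alpha_{n-1}$, $u\beta=\alpha_n$ and $u\gamma=\alpha_{n-1,n}$, reducing to the case $\gamma=\alpha_{n-1,n}$, $\alpha=\alpha_{n-1}$, $\beta=\alpha_n$. Branching $\calB(\lambda)$ to the Levi $\langle s_{n-1},s_n\rangle$, and using that $\ftil_{\alpha_{n-1}}=wf_nw^{-1}$ can be computed with $w\in\langle s_{n-1},s_n\rangle$ (by \cite[Remark~5.1]{LLAtomic}), the assertion becomes an identity of operators on crystals of type $A_2$, which is verified by a direct computation with the string parametrization $\str(\cdot)$ exactly as in the proof of \Cref{21explicit}.

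For part (2), write $\gamma=\alpha_{i,k}$ and argue by induction on $k-i$. As $\wt(T')$ and $\wt(T)=\wt(T')-\gamma$ are dominant, one has $\langle\wt(T'),\alpha_i^\vee\rangle\geq 1$ and $\langle\wt(T'),\alpha_k^\vee\rangle\geq 1$. If $\wt(T')$ covers $\wt(T)$ in the dominance order of $X_+$, take $r=1$ and $\beta_1=\gamma$; conditions (a)--(c) then hold trivially, and this includes the base case $i=k$, since a difference of two dominant weights which is a simple root is automatically a cover. Otherwise, the key point is the combinatorial claim that there is a two-term decomposition $\gamma=\delta_1+\delta_2$ into positive roots with $\wt(T')-\delta_1$ dominant and $\langle\wt(T'),\delta_1^\vee\rangle>0$, $\langle\wt(T'),\delta_2^\vee\rangle>0$. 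Granting it, \Cref{abcommute}(1) applied to $\delta_1,\delta_2$ gives $\ftil_\gamma(T')=\ftil_{\delta_2}\ftil_{\delta_1}(T')\neq 0$; setting $T_1:=\ftil_{\delta_1}(T')$ we get $T_1\in\bbB^+(\lambda)$ (its weight $\wt(T')-\delta_1$ is dominant and $T_1\neq 0$, because $\ftil_{\delta_2}(T_1)=T\neq 0$) and $\ftil_{\delta_2}(T_1)=T$. Since $\hgt(\delta_1),\hgt(\delta_2)<\hgt(\gamma)$, applying the inductive hypothesis to the edges $T_1\to T'$ (via $\ftil_{\delta_1}$) and $T\to T_1$ (via $\ftil_{\delta_2}$) and concatenating the two resulting chains in $\bbB^+(\lambda)$ produces the required $\beta_1,\dots,\beta_r$; conditions (b)--(c) pass to the concatenation because a concatenation of covering chains is a covering chain.

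It remains to prove the combinatorial claim, which is the main obstacle. Inspecting the pairings $\langle\alpha_{i,m},\alpha_l^\vee\rangle$, one sees that $\wt(T')-\alpha_{i,m}$ is dominant exactly when $\langle\wt(T'),\alpha_m^\vee\rangle\geq 1$ (for $i<m<k$), that $\wt(T')-\alpha_i$ is dominant exactly when $\langle\wt(T'),\alpha_i^\vee\rangle\geq 2$, and symmetrically on the $k$-side; in each such case one takes $\delta_1$ to be $\alpha_{i,m}$, $\alpha_i$ or $\alpha_k$ and $\delta_2=\gamma-\delta_1$, and the conditions $\langle\wt(T'),\delta_1^\vee\rangle>0$, $\langle\wt(T'),\delta_2^\vee\rangle>0$ hold automatically because $\delta_1$ contains $\alpha_i$ and $\delta_2$ contains $\alpha_k$. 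If none of these cases occurs, then $\langle\wt(T'),\alpha_m^\vee\rangle$ equals $1$ for $m\in\{i,k\}$ and $0$ for $i<m<k$; writing any putative dominant $\nu$ with $\wt(T')>\nu>\wt(T)$ as $\wt(T')-\sum_{m\in S}\alpha_m$ with $\emptyset\neq S\subsetneq\{i,\dots,k\}$ and examining the left end of the leftmost maximal interval of $S$, one finds $\langle\nu,\alpha_l^\vee\rangle<0$ for a suitable $l$, a contradiction. Hence in this last case $\wt(T')$ covers $\wt(T)$, which was already handled, so one of the two alternatives of the induction always applies.
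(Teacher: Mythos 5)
Your overall strategy is sound and close to the paper's: part (1) is exactly the paper's ``alternative proof'' (reduction to an $A_2$ Levi and the explicit string computation as in \Cref{21explicit}; the paper's primary proof simply cites \cite[Theorem 5.3(1.i)]{LLAtomic}), and part (2) is the same height induction, reduced to producing a two-term split $\gamma=\delta_1+\delta_2$ with $\wt(T')-\delta_1$ dominant. Where you genuinely diverge is in how that split is found: the paper invokes Stembridge's theorem that covers in the dominance order differ by a positive root, takes a cover $\wt(T')\gtrdot\wt(T')-\zeta$ with $\zeta=\alpha_{j',k'}<\gamma$, and checks directly that $\wt(T')-\alpha_{j,k'}\in X_+$; you instead run a self-contained case analysis on the values $\langle\wt(T'),\alpha_m^\vee\rangle$. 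Your route avoids the external reference at the cost of the terminal non-existence argument, which the paper never needs.

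That terminal argument contains the one step that fails as written. In the residual case ($\langle\wt(T'),\alpha_m^\vee\rangle=1$ for $m\in\{i,k\}$ and $0$ for $i<m<k$), you claim that for $\nu=\wt(T')-\sum_{m\in S}\alpha_m$ with $\emptyset\neq S\subsetneq\{i,\dots,k\}$, dominance fails at the \emph{left end} of the leftmost maximal interval of $S$. If that interval is $[a,b]$ with $a=i<b$, then $\langle\nu,\alpha_i^\vee\rangle=1-(2-1)=0$, so no contradiction arises there (e.g.\ $\gamma=\theta$ in $A_3$, $\wt(T')=\varpi_1+\varpi_3$, $S=\{1,2\}$). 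The claim itself is true, but you must look elsewhere: if some $l\in S$ has $l>i$ and $l-1\notin S$, then $\langle\nu,\alpha_l^\vee\rangle<0$; otherwise $S=[i,b]$ is an initial segment with $b<k$, and then $\langle\nu,\alpha_b^\vee\rangle=-1$ (whether $b=i$ or $b>i$). With that one-line repair your proof is complete.
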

\begin{proof}
	The first part is case (1.i) of \cite[Theorem 5.3]{LLAtomic}.
	
	We prove the second part by induction on $\langle \gamma,\rho^\vee\rangle$.
	Notice that if $\gamma=\alpha+\beta$ with $\alpha,\beta\in \Phi^+$, then $\langle \wt(T'),\alpha^\vee\rangle \geq \langle \gamma,\alpha^\vee\rangle >0$ and $\langle \wt(T'),\beta^\vee\rangle>0$.  In view of the first part, it is enough to show that, if $\wt(T')$ does not cover $\wt(T)=\wt(T')-\gamma$, then there exists $\alpha,\beta\in \Phi_+$ with $\alpha+\beta=\gamma$ such that $\wt(T')-\alpha\in X_+$.
	
	Assume that $\wt(T')$ does not cover $\wt(T)=\wt(T')-\gamma$.
	Then there exist a positive combination of simple roots $\zeta\in \bbN \Delta$ such that $\wt(T')\gtrdot\wt(T')-\zeta>\wt(T)-\gamma$. 
By \cite[Theorem 2.6]{StePartial} we have $\zeta\in \Phi_+$, and $\zeta<\gamma$ because $\wt(T')-\zeta>\wt(T)-\gamma$. In particular, $\gamma$ is not simple and we can assume $\gamma=\alpha_{j,k}$ with $j<k$. Then we have $\zeta=\alpha_{j',k'}$ for some $j\leq j'\leq k'\leq k$. We can further assume $k'\neq k$ (the case $j'\neq j$ is similar). Since $\wt(T)\in X_+$ and $\wt(T')-\alpha_{j',k'}=\wt(T)+\alpha_{j,j'-1}+\alpha_{k'+1,k}\in X_+$, we also have $\wt(T)+\alpha_{k'+1,k}=\wt(T')-\alpha_{j,k'}\in X_+$. In fact,
\[\langle \wt(T)+\alpha_{k'+1,k},\alpha_i^\vee\rangle=\begin{cases}
\langle \wt(T),\alpha_i^\vee\rangle & \text{if }i< k'\\
\langle \wt(T)+\alpha_{j,j'-1}+\alpha_{k'+1,k},\alpha_i^\vee\rangle-\langle \alpha_{j,j'-1},\alpha_k^\vee\rangle & \text{if }i\geq  k'
\end{cases}  \]
is positive for any $i$.
 We conclude since $\gamma=\alpha_{j,k'}+\alpha_{k'+1,k}$.
\end{proof}

\begin{proof}[Alternative proof of \Cref{abcommute}(1)]
	
	Here we sketch an alternative proof for the first part which does not rely on the combinatorics of semistandard tableaux. 
	
	We can find $w\in W$ such that $w(\alpha)=\alpha_{n-1}$ and $w(\beta)=\alpha_{n}$. So we can restrict to a crystal of type $A_2$ and it is enough to prove the statement for $\alpha=\alpha_1$ and $\beta=\alpha_2$.
	Assume $\lambda=N\varpi_1+N'\varpi_2$ with $N,N'>0$.

	Similarly to \Cref{21explicit}, we can associate to any element $B\in \calB(\lambda)$ in a unique way a triple of integers $\strtil(B)=(a,b,c)$ such that $B=f_2^af_1^bf_2^c(v)$ where $v\in \calB(\lambda)$ is the unique element of maximal weight.
	
	Let	$\strtil(B)=(a,b,c)$.
	If $f_2(B)=\ftil_2(B)\neq 0$, we have $\strtil(\ftil_2(B))=p_1(a,b,c)=(a+1,b,c)$. 
	We have $\ftil_{1}=s_2s_1p_1s_1s_2$ and we can explicitly compute the action of $\ftil_1$ on strings as $\sigma_2 \theta\sigma_1\theta p_1\theta \sigma_1\theta \sigma_2$. As a result, if $\ftil_1(B)\neq 0$ we obtain
		\begin{equation}\label{f2} \strtil(\ftil_1(B))=\begin{cases}
		(a,b+1,c) & \text{if }b< N' \\
		(a-1,b+1,c+1) & \text{if }b\geq  N'. 
		\end{cases}\end{equation}
	
		Let $\ftil_{12}:=\ftil_{\alpha_1+\alpha_2}$.
	We can compute $\ftil_{12}$ on strings as $\theta \sigma_1\theta f_2\theta\sigma_1\theta$. If $\ftil_{12}(B)\neq 0$, we obtain
\begin{equation}\label{f12} \strtil(\ftil_{12}(B))=\begin{cases}
	(a+1,b+1,c) & \text{if }b< N' \\
	(a,b+1,c+1) & \text{if }b\geq  N'. \\	
\end{cases}\end{equation}

Let now $T$ be as in the statement. Since $\langle \wt(T),\alpha_1^\vee\rangle >0$, we have $\ftil_1(T)\neq 0$. Moreover,  $\langle \wt(\ftil_1(T)),\alpha_2^\vee\rangle >0$, so also $\ftil_2\ftil_1(T)\neq 0$. In the same way we see that all the terms occurring in \eqref{abc} are not trivial. The equality follows now from the explicit expressions \eqref{f2} and \eqref{f12}.
\end{proof}

For any element $T\in \calB(\lambda)$ we denote by $\bar{T}$ the unique element in the orbit $W\cdot T$ such that $\wt(\bar{T})\in X_+$.

\begin{definition}
	We say that $T,T'\in \calB(\lambda)$ are \emph{LL-equivalent} if $\bar{T}$ and $\bar{T'}$ lie in the same connected component of $\bbB^+(\lambda)$.

We call \emph{LL atoms} the equivalence classes of $\calB(\lambda)$ with respect to the LL equivalence. 
\end{definition}

Then \cite[Theorem 6.5]{LLAtomic} 
 can be easily reformulated as follows.
\begin{thm}\label{LLtheorem}
	The LL atoms form an atomic decomposition of $\calB(\lambda)$.
\end{thm}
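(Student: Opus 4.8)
The plan is to deduce the theorem from its counterpart for the dominant part of the crystal, \cite[Theorem 6.5]{LLAtomic}, by transporting that statement along the projection $T\mapsto\bar T$. I would first isolate one elementary observation: for every $S\in\calB(\lambda)$ and every $w$ in the stabilizer $\mathrm{Stab}_W(\wt(S))$ one has $w\cdot S=S$. Indeed $\mathrm{Stab}_W(\wt(S))$ is generated by the simple reflections $s_i$ with $\langle\wt(S),\alpha_i^\vee\rangle=0$, so it is enough (writing $w$ as a product of such reflections and inducting on the word length) to check that $s_i\cdot S=S$ whenever $\langle\wt(S),\alpha_i^\vee\rangle=0$; and this holds because by \eqref{phi-eps} the hypothesis forces $\phi_i(S)=\eps_i(S)$, i.e.\ $S$ sits at the midpoint of its $\alpha_i$-string, which is fixed by the string-reversal $s_i$. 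A first consequence is that every $S$ with $\wt(S)\in X_+$ is the only element of weight $\wt(S)$ in its orbit $W\cdot S$, so $\bar S=S$; hence, writing $\calC_i$ for the connected component of $\bbB^+(\lambda)$ containing $\bar T$, the LL atom of $T$ equals the preimage $\bar{\cdot}^{\,-1}(\calC_i)=W\cdot\calC_i$ of $\calC_i$ under $T\mapsto\bar T$. Since the components $\calC_1,\calC_2,\dots$ partition the dominant part $\{S\in\calB(\lambda):\wt(S)\in X_+\}$, the LL atoms $\calA_i:=W\cdot\calC_i$ partition $\calB(\lambda)$, and it remains to verify the two axioms of an atomic decomposition for each $\calA_i$.

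Here I would invoke \cite[Theorem 6.5]{LLAtomic}; by \Cref{abcommute}(2) the components of $\bbB^+(\lambda)$ coincide with those of Lecouvey--Lenart's original graph, so that theorem applies and tells us that inside each $\calC_i$ all weights are pairwise distinct and that $\{\wt(S):S\in\calC_i\}=\{\nu\in X_+:\nu\leq_{\mathrm{dom}}\mu_i\}$ for some $\mu_i\in X_+$, where $\leq_{\mathrm{dom}}$ denotes the dominance order on $X_+$. The first axiom is then immediate: if $T,T'\in\calA_i$ satisfy $\wt(T)=\wt(T')$, then $\bar T$ and $\bar T'$ lie in $\calC_i$ and have the common weight $\overline{\wt(T)}$, so $\bar T=\bar T'$ by distinctness of weights in $\calC_i$; thus $T'=w\cdot T$ for some $w\in\mathrm{Stab}_W(\wt(T))$, and the observation above forces $T'=T$.

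For the second axiom, I would compute
\[\{\wt(T):T\in\calA_i\}=W\cdot\{\wt(S):S\in\calC_i\}=W\cdot\{\nu\in X_+:\nu\leq_{\mathrm{dom}}\mu_i\},\]
using $\wt(w\cdot S)=w\cdot\wt(S)$, and then invoke the standard description of lower Bruhat intervals of dominant weights: for $\mu\in X_+$, the interval $\{\gamma\in X:\gamma\leq\mu\}$ in the Bruhat order on $X$ is $W$-stable and equal to $\bigcup_{\nu\in X_+,\,\nu\leq_{\mathrm{dom}}\mu}W\nu$ (this is the combinatorial shadow of the fact that the $T$-fixed points $t^\gamma$ lying in the closure of the $\Gr^\vee$-Schubert cell attached to $\mu$ are exactly those with $\bar\gamma\leq_{\mathrm{dom}}\mu$; cf.\ the conventions in \cite{ChargeGen}). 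Taking $\mu=\mu_i$ yields $\{\wt(T):T\in\calA_i\}=\{\gamma\in X:\gamma\leq\mu_i\}$, so $\calA_i$ is an atom of highest weight $\mu_i$, which completes the proof. Apart from this last step, the whole argument is bookkeeping around the $W$-action together with the one-line midpoint observation; the step I expect to require genuine care is precisely the matching of Lecouvey--Lenart's dominance order on $X_+$ with the Bruhat order on $X$ used throughout the paper. If this identification of partial orders is not available in exactly the form quoted above, it would have to be set up separately --- e.g.\ via the affine Weyl group, or geometrically via orbit closures in $\Gr^\vee$ --- before the present reformulation can be regarded as routine.
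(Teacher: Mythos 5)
Your proposal is correct and follows essentially the same route as the paper, which likewise obtains the theorem as a direct reformulation of \cite[Theorem 6.5]{LLAtomic}, using \Cref{abcommute}(2) to identify the connected components of $\bbB^+(\lambda)$ with those of Lecouvey--Lenart's graph and the standard characterization of the lower Bruhat interval $\{\gamma\in X\mid \gamma\leq\mu\}$ as the union of the $W$-orbits of the dominant weights below $\mu$ (recalled in \Cref{TBGsec} via the convex hull of $W\cdot\lambda$). The one slip is your claim that $\mathrm{Stab}_W(\wt(S))$ is generated by simple reflections for \emph{arbitrary} $S$ --- this fails for non-dominant weights, whose stabilizers are generated by non-simple reflections in general --- but the fact you actually use ($w\cdot S=S$ for $w$ stabilizing $\wt(S)$) follows from the dominant case you do prove, e.g.\ by comparing $T$ and $T'$ through the common dominant representative $\bar T=\bar{T'}$ rather than directly.
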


\begin{remark}
	To prove \Cref{LLtheorem}, Lecouvey and Lenart make use of the combinatorics of semistandard tableaux and of the cyclage operations (in \cite[Lemma 5.17 and 5.18]{LLAtomic}). This can be avoided, and we briefly explain here how one could achieve a proof independent from \cite{LSSur}. 
	
	A crucial point in Lecouvey and Lenart's proof is to to show that the operators $\tilde{f}_\alpha$ and $\tilde{f}_\beta$, as well as their inverses $\etil_\alpha$ and $\etil_\beta$, commute in $\calB^+(\lambda)$ (cf. \cite[Theorems 5.3 and 5.5]{LLAtomic}). After conjugation by an element of $W$, this can be reduced to show the following four commutativity statements \textbf{C1-4} (cf. \cite[Lemma 5.4 and 5.6]{LLAtomic}).
	\begin{enumerate}[label=\textbf{C\arabic*.}]
		\item \label{C1} Let $\calB(\lambda)$ be a crystal of type $A_2$ and let $T\in \calB^+(\lambda)$ be such that $\langle \wt(T),\alpha_1^\vee\rangle>0$ and $\langle \wt(T),\alpha_2^\vee\rangle>0$, then $\ftil_1\ftil_2(T)=\ftil_2\ftil_1(T)=\ftil_{\alpha_1+\alpha_2}(T)\neq 0$.

		\item \label{C2} Let $\calB(\lambda)$ be a crystal of type $A_2$  and let $T\in \calB^+(\lambda)$ be such that $\etil_1(T)\neq 0$ and $\etil_2(T)\neq 0$, then \begin{equation}\label{e12=e21}
		\etil_1\etil_2(T)=\etil_2\etil_1(T)=\etil_{\alpha_1+\alpha_2}(T)\neq 0.
		\end{equation}

		
		\item \label{C3} Let $\calB(\lambda)$ be a crystal of type $A_3$ and let $T\in \calB(\lambda)$ be such that $\langle \wt(T),\alpha_1^\vee\rangle>0$ and $\langle \wt(T),\alpha_3^\vee\rangle>0$. Then $\ftil_1\ftil_3(T)=\ftil_3\ftil_1(T)\neq 0$.
		

		\item \label{C4} Let $\calB(\lambda)$ be a crystal of type $A_3$ and let $T\in \calB^+(\lambda)$ be such that $\etil_1(T)\neq 0$, $\etil_3(T)\neq 0$ and $\langle\wt(T),\alpha_2\rangle\geq 1$, then \begin{equation}\label{e13=e31}
			\etil_1\etil_3(T)=\etil_3\etil_1(T)\neq 0.
		\end{equation}


\end{enumerate}	
	
The equality \ref{C1} is shown in \Cref{abcommute}(1). To obtain \ref{C2}, notice that $\etil_2(B)=0$ if and only if $\strtil(B)=(0,b,c)$. Then, it immediately follows from \eqref{f2} that $\etil_2\etil_1(T)\neq 0$. The claim now follows by applying \Cref{21explicit} to $T':=\etil_2\etil_1(T)$.

To show \ref{C3} and \ref{C4} one can proceed in a way similar to the proof of \Cref{21explicit}, i.e., one can compute explicitly the action of $\ftil_1$ on string patterns in rank $3$, with respect to the reduced expression $w_0=s_3s_2s_3s_1s_2s_3$. This is a tedious but straightforward computation, and it will show that $\ftil_1\ftil_3(T)=\ftil_3\ftil_1(T)$ as long as both terms are well-defined and non-zero. Moreover, if $T\in \calB^+(\lambda)$ is such that $\eps_3(T)>0$, then also $\eps_3(\etil_1(T))>0$. In particular, we obtain that $\etil_1\etil_3(T)\neq 0$ if $\etil_3(T)\neq 0$ and $\etil_3(T)\neq 0$. Finally, \ref{C4} follows by applying \ref{C3} to $T':=\etil_1\etil_3(T)\neq 0$.

	

\end{remark}
 
 The goal of this section is to give an equivalent construction of the LL atoms which turns out to be more convenient for our purposes.
 We start by defining another graph.
 
 \begin{definition}
 	We define $\bbB_1(\lambda)$ to be the graph with vertices the elements of $\calB(\lambda)$ and such that there is an edge between $T$ and $T'$ if and only if $T=w(T')$ for some $w\in W$ or $T=f_n(T')$.
 \end{definition}

\begin{prop}\label{ccareLL}
	The connected components of $\bbB_1(\lambda)$ are the LL atoms of $\calB(\lambda)$.
\end{prop}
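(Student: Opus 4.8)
The goal is to show that the connected components of $\bbB_1(\lambda)$ coincide with the LL atoms, i.e.\ the connected components of $\bbB^+(\lambda)$ (after identifying each element $T$ with its dominant representative $\bar T$). The natural strategy is to prove the two inclusions of the partitions separately. The key technical input will be the relation between the operator $f_n$ (used in $\bbB_1$) and the operators $\tilde f_\alpha$ (used in $\bbB^+$), together with \Cref{abcommute}, which tells us that within the dominant part one can factor a ``long'' step $\tilde f_\gamma$ into a chain of steps $\tilde f_{\beta_i}$ that stay dominant.

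\textbf{Step 1: $\bbB_1$-equivalence is coarser than or equal to LL-equivalence.} First I would show that if $T \sim_{LL} T'$ then $T$ and $T'$ lie in the same component of $\bbB_1(\lambda)$. Since $\bbB_1$ contains all edges of the form $T \leftrightarrow w(T')$, every $W$-orbit is contained in a single $\bbB_1$-component; in particular $T$ is $\bbB_1$-connected to $\bar T$ and $T'$ to $\bar T'$. So it suffices to connect $\bar T$ to $\bar T'$ when they lie in the same component of $\bbB^+(\lambda)$. An edge of $\bbB^+(\lambda)$ is of the form $\tilde f_\alpha(T') = T$ with $T, T'$ dominant. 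By definition $\tilde f_\alpha = w f_n w^{-1}$ for a suitable $w \in W$; hence $T = w(f_n(w^{-1}(T')))$, so $T'$, $w^{-1}(T')$, $f_n(w^{-1}(T'))$, and finally $T = w(f_n(w^{-1}(T')))$ form a path of $\bbB_1$-edges (two $W$-edges and one $f_n$-edge). Concatenating over a path in $\bbB^+(\lambda)$ gives a $\bbB_1$-path from $\bar T$ to $\bar T'$. This direction is essentially formal.

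\textbf{Step 2: LL-equivalence is coarser than or equal to $\bbB_1$-equivalence.} For the converse I must show that the two generating relations of $\bbB_1$ preserve the LL atom. For $W$-edges this is immediate: $T$ and $w(T)$ have the same dominant representative $\bar T$, so they are LL-equivalent by definition. The real content is the $f_n$-edges: I need to show that $T$ and $f_n(T)$ (when $f_n(T) \neq 0$) are always LL-equivalent, for arbitrary $T \in \calB(\lambda)$ (not necessarily dominant). Equivalently, writing $S := \overline{f_n(T)}$ and recalling $\bar T$, I must connect $\bar T$ and $S$ in $\bbB^+(\lambda)$. The plan is: apply the $W$-action to move $T$ into a convenient position, use that $f_n = f_{\alpha_n} = \tilde f_{\alpha_n}$ and the covering/interval property of atoms, and then invoke \Cref{abcommute}(2) to replace a possibly non-dominant passage by a chain of dominant $\tilde f_\beta$-steps. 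Concretely, one expects that $\wt(T)$ and $\wt(f_n(T)) = \wt(T) - \alpha_n$ have dominant representatives $\mu := \overline{\wt(T)}$ and $\mu' := \overline{\wt(T)-\alpha_n}$ with $\mu' \le \mu$ (or the reverse), and that one can realize the passage from $\bar T$ to $\overline{f_n(T)}$ via modified crystal operators applied within $\bbB^+(\lambda)$; \Cref{abcommute}(1) guarantees that if a composite $\tilde f_\gamma$ leaves the dominant region one may factor it, and part (2) upgrades this to a full chain of covering steps staying in $\bbB^+(\lambda)$.

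\textbf{Main obstacle.} The delicate point is Step 2 in the case where $\wt(T)$ is \emph{not} dominant: applying $f_n$ to a non-dominant element and then taking the dominant representative can, a priori, jump between connected components unless one carefully tracks how $f_n$-strings interact with the $W$-action. The argument must show that $\overline{f_n(T)}$ is reachable from $\bar T$ by $\tilde f_\alpha$'s — this is precisely where one needs that the LL atoms already form an atomic decomposition (\Cref{LLtheorem}), so that weights determine elements within an atom and the interval structure of the weights can be used to build the required chain of covering relations. I would lean on \Cref{LLtheorem} together with \Cref{abcommute}(2) to reduce everything to covering steps $\wt \gtrdot \wt - \beta$ in the dominant cone, each realized by a single $\tilde f_\beta$, and then translate back. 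A clean way to organize this is to prove by induction on $\langle \lambda - \wt(T), \rho^\vee\rangle$ (or on the distance of $\wt(T)$ from the dominant chamber) that every element is $\bbB_1$-connected to a dominant one lying in the expected LL atom, and separately that every $f_n$-edge between two dominant elements is an edge (or short path) of $\bbB^+(\lambda)$ — the latter following from $f_n = \tilde f_{\alpha_n}$ and \Cref{abcommute}(2).
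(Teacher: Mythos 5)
Your Step 1 (LL-equivalence implies $\bbB_1$-equivalence) is correct and is exactly the paper's argument: an edge $\tilde f_\alpha(T')=T$ of $\bbB^+(\lambda)$ factors as $w\circ f_n\circ w^{-1}$, hence as a path of $\bbB_1$-edges. The problem is Step 2, and specifically the case you yourself flag as the ``main obstacle'': showing that $T$ and $f_n(T)$ lie in the same LL atom when $\wt(T)$ is not dominant. You do not close this gap; you only sketch a plan (``one expects that\dots'', ``I would lean on\dots''), and the tools you propose do not suffice. \Cref{abcommute}(2) only applies once you already have an edge of $\bbB^+(\lambda)$, i.e.\ once both endpoints are dominant, so it cannot be used to transport a non-dominant $f_n$-edge into the dominant chamber. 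Likewise, knowing that the LL atoms form an atomic decomposition (\Cref{LLtheorem}) tells you which weights occur in each atom, but several atoms can contain elements of the same weight, so the interval structure alone cannot identify \emph{which} atom $\overline{f_n(T)}$ lands in.

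The missing idea is a type-$A$-specific fact about weights (\Cref{Aweight} in the paper): for any $\mu\in X$, either $s_n(\mu)=\mu+\alpha_n$, or there exists a \emph{single} $w\in W$ with both $w(\mu)$ and $w(\mu+\alpha_n)$ dominant. In the first case $\langle\wt(T'),\alpha_n^\vee\rangle=1$ and $s_n(T')=f_n(T')=T$, so the $f_n$-edge is already a $W$-edge and there is nothing to prove. In the second case the same $w$ simultaneously dominantizes $T'$ and $T=f_n(T')$, and then $wf_nw^{-1}$ equals $\tilde f_{w(\alpha_n)}$ (if $w(\alpha_n)\in\Phi_+$) or $\tilde f_{-w(\alpha_n)}$ read backwards (if $w(\alpha_n)\in\Phi_-$), producing a single edge of $\bbB^+(\lambda)$ between $\bar T$ and $\bar{T'}$. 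Without this simultaneous dominantization lemma your induction has no mechanism to control which atom the dominant representative of $f_n(T)$ falls into, so Step 2 as written is a genuine gap rather than a complete argument.
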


In the proof we make use of the following elementary Lemma about weights in type $A$.
\begin{lemma}\label{Aweight}
	Let $\mu \in X$. Then, either $s_n(\mu)=\mu+\alpha_n$ or there exists $w\in W$ such that both $w(\mu)$ and $w(\mu+\alpha_n)$ lie in $X_+$.
\end{lemma}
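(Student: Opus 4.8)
The plan is to analyze the action of $s_n$ on an arbitrary weight $\mu \in X = \mathbb{Z}^n$ directly in coordinates. Writing $\mu$ in the standard basis where the simple roots of $A_n$ are $\alpha_i = \varepsilon_i - \varepsilon_{i+1}$, the reflection $s_n$ swaps the last two coordinates of (a lift of) $\mu$, and $\langle \mu, \alpha_n^\vee\rangle$ measures the difference of those two coordinates. We have $s_n(\mu) = \mu + \langle \mu, \alpha_n^\vee\rangle \alpha_n$, so the case $s_n(\mu) = \mu + \alpha_n$ corresponds exactly to $\langle \mu,\alpha_n^\vee\rangle = 1$. The content of the lemma is therefore: if $\langle \mu, \alpha_n^\vee\rangle \neq 1$, then some $w \in W$ carries both $\mu$ and $\mu + \alpha_n$ into the dominant cone $X_+$.

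First I would dispose of the easy sub-cases. If $\langle \mu, \alpha_n^\vee\rangle = 0$, then $s_n$ fixes $\mu$, so $\mu + \alpha_n = \mu$ up to this reflection being trivial — more precisely $\mu$ and $\mu+\alpha_n$ lie in a single $\langle s_n\rangle$-orbit and one checks $s_n(\mu+\alpha_n) = \mu$, hence after choosing $w$ dominant for $\mu$ one adjusts by $s_n$ if needed; in fact here it is cleanest to observe $\mu + \alpha_n$ and $\mu$ are $s_n$-conjugate so it suffices to make $\mu$ dominant and note the $\alpha_n$-string argument. If $\langle \mu, \alpha_n^\vee\rangle \leq -1$, then $\langle \mu + \alpha_n, \alpha_n^\vee\rangle = \langle \mu,\alpha_n^\vee\rangle + 2 \leq 1$, and in all these cases (including $\langle\mu,\alpha_n^\vee\rangle\geq 2$, where $\langle \mu+\alpha_n,\alpha_n^\vee\rangle \geq 4 > 0$) the real work is to find a single $w$ dominating both. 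The key observation is that $\mu$ and $\mu + \alpha_n$ differ only in the last two coordinates; choose $w \in W$ so that $w$ sorts the coordinates of $\mu$ into weakly decreasing order. The subtlety is that modifying the last two coordinates can change which permutation sorts the vector.

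The main step, and the main obstacle, is the following: produce $w$ simultaneously. Here is the approach. Let $\mu = (m_1, \dots, m_n)$ in suitable coordinates and $\mu + \alpha_n = (m_1, \dots, m_{n-1} + 1, m_n - 1)$; write $a = m_{n-1}$, $b = m_n$, so $\langle\mu,\alpha_n^\vee\rangle = a - b$, and we are outside the excluded case iff $a - b \neq 1$, i.e. $a \neq b+1$. If $a \geq b$ (so $a - b \geq 0$, $a-b \neq 1$ forces $a = b$ or $a \geq b+2$): in the case $a \geq b+2$ both $(a,b)$ and $(a+1,b-1)$ are weakly decreasing pairs with the same relative order to all other coordinates being governed only by comparisons with $a$ and with $b$, and since $a+1 > a$ and $b - 1 < b$ the set of coordinates $\leq$ or $\geq$ each can only improve; one shows a common sorting permutation exists. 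If $a \leq b - 1$: then $a - b \leq -1$, and $(a, b)$ sorted puts $b$ before $a$; replacing by $(a+1, b-1)$ keeps $b - 1 \geq a + 1$ when $b - a \geq 2$, handled symmetrically, while $b - a = 1$ gives the pair $(a, a+1) \to (a+1, a)$ which is again fine. I expect the cleanest packaging is: pick $w$ sorting the multiset of coordinates of the pair $(a, b)$ together with the rest, breaking ties so that the coordinate that will be incremented comes first and the one decremented comes last; then verify this $w$ works for both vectors by checking $w$-dominance means each consecutive difference is $\geq 0$, using $a \geq b$ or handling $b > a$ via the same tie-breaking. The genuinely delicate point to get right is the tie-breaking and the boundary comparisons with the other coordinates $m_1, \dots, m_{n-2}$ — I would phrase it as: the relevant inequalities for $w$-dominance of both $\mu$ and $\mu+\alpha_n$ are a finite list, each of which follows from the inequalities used to sort $\mu$, because incrementing a large entry and decrementing a small entry only relaxes the ordering constraints. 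This is where I'd spend the care; everything else is bookkeeping.
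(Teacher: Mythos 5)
There is a genuine error, and it starts with a sign: the reflection formula is $s_n(\mu)=\mu-\langle\mu,\alpha_n^\vee\rangle\alpha_n$, so the excluded case $s_n(\mu)=\mu+\alpha_n$ is $\langle\mu,\alpha_n^\vee\rangle=-1$, not $+1$. In your coordinates ($a$ the penultimate entry, $b$ the last, $\langle\mu,\alpha_n^\vee\rangle=a-b$) the excluded case is $b=a+1$, not $a=b+1$. This propagates into the case analysis: the case you declare ``again fine'' --- $b-a=1$, i.e.\ the pair $(a,a+1)\to(a+1,a)$ --- is precisely the excluded case, and it is \emph{not} fine. Already in $A_1$ with $\mu=(0,1)$ and $\mu+\alpha_1=(1,0)$, sorting $\mu$ forces the last coordinate before the other while sorting $\mu+\alpha_1$ forces the opposite, so no single $w$ dominates both; this is exactly why the lemma carries the disjunction. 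Conversely, the case $a=b+1$ that you set aside as excluded is a legitimate case that your proof must (and, with your tie-breaking rule, can) handle. The same confusion appears in your $\langle\mu,\alpha_n^\vee\rangle=0$ sub-case: there $s_n(\mu+\alpha_n)=\mu-\alpha_n\neq\mu$, so $\mu$ and $\mu+\alpha_n$ are not $s_n$-conjugate; that conjugacy happens only in the excluded case.

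The underlying mechanism --- sort the coordinates, breaking ties so that the entry to be incremented precedes the entry to be decremented --- is sound and is essentially the paper's argument in coordinate form (the paper iteratively replaces $x$ by $s_ix$ whenever $\langle x(\mu),\alpha_i^\vee\rangle=0$ obstructs dominance of $x(\mu+\alpha_n)$, and isolates $\langle\mu,\alpha_n^\vee\rangle=-1$ as the one unfixable obstruction). But as written your proof both asserts a false statement in the genuinely impossible case and omits a case that needs proof, so the sign must be corrected and the case list redone before the argument stands.
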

\begin{proof}
	Let $x\in W$ be such that $x(\mu)\in X_+$ and assume that $x(\mu)+x(\alpha_n)\not\in X_+$. This means that there exists $i$ such that 
\[\langle x(\mu),\alpha_i^\vee\rangle\geq 0\aand \langle x(\mu)+x(\alpha_n),\alpha_i^\vee\rangle <0.\]

	There are two cases to consider: either
	$\langle x(\mu),\alpha_i^\vee\rangle=1$ and 
	$\langle x(\alpha_n),\alpha_i^\vee\rangle = -2$ or $\langle x(\mu),\alpha_i^\vee\rangle=0$ and 	
	 $\langle x(\alpha_n),\alpha_i^\vee\rangle <0$.

	Suppose we are in the first case. Then we must have $x(\alpha_n)=-\alpha_i$ and $x(\alpha_n^\vee)=-\alpha_i^\vee$. This means that 
	\[ \langle \mu,\alpha_n^\vee\rangle = \langle x(\mu),x(\alpha_n^\vee)\rangle=-1\]
	and $s_n(\mu)=\mu+\alpha_n$.

	We can assume now that $\langle x(\mu),\alpha_i^\vee\rangle=0$ and 	
	$\langle x(\alpha_n),\alpha_i^\vee\rangle <0$. 
	By replacing $x$ with $x'=s_ix$, we obtain $x'(\mu)=x(\mu)\in X_+$ and $x'(\mu+\alpha_n)=x(\mu)+x(\alpha_n)+c\alpha_i$, with $c>0$. We can repeat this if necessary (that is, if $x(\mu)+x(\alpha_n)+c\alpha_i\not \in X_+$): it will eventually terminate since any increasing sequence of roots is finite. At the end, either $s_n(\mu)=\mu+\alpha_n$ or we find $w\in W$ such that $w(\mu)=x(\mu)\in X_+$ and $w(\mu+\alpha_n)\in X_+$.
\end{proof}

\begin{proof}[Proof of \Cref{ccareLL}.]
	Let $T,T'\in \calB(\lambda)$. We assume first that $T$ and $T'$ are in the same connected component of $\bbB_1(\lambda)$. We want to show that $T$ and $T'$ are in the same LL atom.
	
	If $T=w(T')$, then by definition $\bar{T}=\bar{T'}$ and they are trivially in the same LL atom. Therefore, it is sufficient to consider the case $T=f_n(T')$.

If $s_n(\wt(T))=\wt(T)+\alpha_n=\wt(T')$, then also $s_n(T)=T'$. So we can assume $s_n(\wt(T))\neq \wt(T)+\alpha_n$. 
Then, by \Cref{Aweight},
there exists $w\in W$ such that $w(T)=\bar{T}$ and $w(T')=\bar{T'}$.
We have \[w f_n w^{-1}(\bar{T'})=\bar{T}.\]
If $w(\alpha_n)\in \Phi_+$ then $w f_n w^{-1}=\ftil_{w(\alpha_n)}$. Otherwise, we have
$\ftil_{-w(\alpha_n)}=ws_n f_ns_n w^{-1}=we_n w^{-1}$, hence $\ftil_{-w(\alpha_n)}(\bar{T})=\bar{T'}$. In both cases, $T$ and $T'$ clearly lie in the same LL atom.

	We assume now that $T,T'$ are in the same LL atom. We can assume $T,T'\in \bbB^+(\lambda)$ and that $T=\ftil_{\alpha} (T')$ for some $\alpha\in \Phi_+$. There exists $w\in W$ such that $\alpha=w(\alpha_n)$ and $\ftil_{\alpha}=w f_n w^{-1}$. We conclude since
	 the connected components of $\bbB_1(\lambda)$ are closed under the action of $W$ and of $f_n$.
\end{proof}

The LL atomic decomposition is preserved by some modified crystal operators.
\begin{lemma}\label{fandatoms}
	Let $\lambda\in X_+$ and $\alpha=\alpha_{j,n}\in \Phi_+$ with $j\leq n$. Then $f_{\alpha}$ and $e_\alpha$ preserve the LL atoms of $\calB(\lambda)$.
	If $T$ is contained in a LL atom $\calA$ of highest weight $\lambda'\in X_+$, we have $f_{\alpha}^k(T)\neq 0$ if and only if $\wt(T)-k\alpha\leq \lambda'$. 
\end{lemma}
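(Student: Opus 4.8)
The plan is to prove the two assertions in turn, both resting on the identification of LL atoms with the connected components of $\bbB_1(\lambda)$ (\Cref{ccareLL}) and on the fact that atoms are weight-multiplicity-free (\Cref{LLtheorem}). For the first assertion, write $w=s_js_{j+1}\cdots s_{n-1}\in W$; then $w(\alpha_n)=\alpha_{j,n}=\alpha$, and directly from the definition of the modified operators (applied with $k=n$) we have $f_\alpha=wf_nw^{-1}$ and $e_\alpha=we_nw^{-1}$. If $T\in\calB(\lambda)$ and $f_\alpha(T)\neq 0$, then $f_n(w^{-1}T)\neq 0$ as well, so
\[T,\quad w^{-1}T,\quad f_n(w^{-1}T),\quad wf_n(w^{-1}T)=f_\alpha(T)\]
is a path in $\bbB_1(\lambda)$ consisting of two $W$-edges and one $f_n$-edge; hence $T$ and $f_\alpha(T)$ lie in the same connected component of $\bbB_1(\lambda)$, that is, in the same LL atom. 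The same argument applies to $e_\alpha$, using that whenever $e_n(y)\neq 0$ there is an $f_n$-edge of $\bbB_1(\lambda)$ joining $y$ and $e_n(y)$. Thus both $f_\alpha$ and $e_\alpha$ preserve the LL atoms.

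For the second assertion, fix $T$ in an atom $\calA$ of highest weight $\lambda'$, so that $\wt(\calA)=\{\nu\in X\mid\nu\leq\lambda'\}$ and every weight occurs in $\calA$ with multiplicity one. Since $f_\alpha^k(T)\neq 0$ precisely when $k\leq\phi_\alpha(T)$, and since $\wt(T)-k\alpha\leq\lambda'$ means exactly $\wt(T)-k\alpha\in\wt(\calA)$, the statement reduces to the identity
\[\{k\geq 0\mid \wt(T)-k\alpha\in\wt(\calA)\}=\{0,1,\dots,\phi_\alpha(T)\}.\]
The inclusion ``$\supseteq$'' is immediate: for $0\leq k\leq\phi_\alpha(T)$ the element $f_\alpha^k(T)$ is nonzero, lies in $\calA$ by the first assertion, and has weight $\wt(T)-k\alpha$ (this inclusion already yields the ``only if'' half of the Lemma). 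For the reverse inclusion, the key input is the rigidity statement
\[(\ast)\qquad S\in\calA \text{ and } f_\alpha(S)=0 \ \Longrightarrow\ \wt(S)-\alpha\notin\wt(\calA)\]
(and, symmetrically, $e_\alpha(S)=0\Rightarrow\wt(S)+\alpha\notin\wt(\calA)$), which I would isolate as a small lemma. To prove $(\ast)$, suppose on the contrary that $S'\in\calA$ has weight $\wt(S)-\alpha$. From $\phi_\alpha(S)=0$ and \eqref{phi-eps} we get $\langle\wt(S),\alpha^\vee\rangle\leq 0$, hence $\langle\wt(S'),\alpha^\vee\rangle\leq -2$, and \eqref{phi-eps} again gives $\eps_\alpha(S')\geq 2$; so $e_\alpha(S')\neq 0$, and by the first assertion $e_\alpha(S')\in\calA$, necessarily of weight $\wt(S)$. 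Multiplicity-freeness forces $e_\alpha(S')=S$, whence $S'=f_\alpha(e_\alpha(S'))=f_\alpha(S)=0$, a contradiction.

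Granting $(\ast)$, I finish by descending along the ray $k\mapsto\wt(T)-k\alpha$. Set $V:=f_\alpha^{\phi_\alpha(T)}(T)$, so $V\in\calA$, $\wt(V)=\wt(T)-\phi_\alpha(T)\alpha$, and $f_\alpha(V)=0$. Suppose $\wt(T)-k\alpha\in\wt(\calA)$ for some $k>\phi_\alpha(T)$, and pick the smallest such $k$, with corresponding element $S\in\calA$. If $k=\phi_\alpha(T)+1$, then $\wt(T)-k\alpha=\wt(V)-\alpha$, contradicting $(\ast)$. If $k\geq\phi_\alpha(T)+2$, then $\wt(T)-(k-1)\alpha\notin\wt(\calA)$ by minimality, so $e_\alpha(S)=0$ (otherwise $e_\alpha(S)\in\calA$ would have weight $\wt(T)-(k-1)\alpha$); now \eqref{phi-eps} gives $\langle\wt(T),\alpha^\vee\rangle-2k=\langle\wt(S),\alpha^\vee\rangle\geq 0$, while $f_\alpha(V)=0$ gives $\langle\wt(T),\alpha^\vee\rangle-2\phi_\alpha(T)=\langle\wt(V),\alpha^\vee\rangle\leq 0$; combining these yields $2k\leq\langle\wt(T),\alpha^\vee\rangle\leq 2\phi_\alpha(T)<2k$, which is absurd. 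Hence no such $k$ exists, proving the identity and the Lemma. I expect $(\ast)$ to be the main obstacle: it expresses the rigidity of $\alpha$-strings inside a multiplicity-free subcrystal stable under $f_\alpha$ and $e_\alpha$, and it is the step where the hypothesis $\alpha=\alpha_{j,n}$ genuinely enters — via the first assertion, which is what licenses the claim ``$e_\alpha(S')\in\calA$''. The remainder is routine bookkeeping with $\bbB_1(\lambda)$ and with the identity $\phi_\alpha-\eps_\alpha=\langle\wt(\cdot),\alpha^\vee\rangle$.
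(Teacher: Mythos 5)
Your proof is correct and follows essentially the same route as the paper: the first assertion via $f_\alpha=wf_nw^{-1}$ and the closure of connected components of $\bbB_1(\lambda)$ under $W$ and $f_n$, and the second via multiplicity-freeness of atoms together with the identity $\phi_\alpha-\eps_\alpha=\langle\wt(\cdot),\alpha^\vee\rangle$. The paper simply reduces to $k=1$ and argues directly, whereas you package the same mechanism as the rigidity statement $(\ast)$ plus a descent along the $\alpha$-ray; this is slightly longer but has the minor advantage of not implicitly invoking convexity of $\{\nu\leq\lambda'\}$ along the ray.
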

\begin{proof}
	We have $f_{\alpha}=wf_nw^{-1}$ for some $w\in W$, so $f_{\alpha}$ preserves the atoms since both $f_n$ and $W$ do. The same holds for $e_\alpha$.
	
	It is enough to show the second statement for $k=1$.
	If $f_{\alpha}(T)\neq 0$, then $\wt(T)-\alpha$ is a weight in $\calA$, hence $\wt(T)-\alpha\leq \lambda'$.
	
	On the other hand, if $\wt(T)-\alpha\leq \lambda'$, then there exists $T'\in \calA$ such that $\wt(T')=\wt(T)-\alpha$.
	If $\langle \wt(T),\alpha^\vee\rangle> 0$ then $f_{\alpha}(T)\neq 0$. If $\langle \wt(T),\alpha^\vee\rangle\leq 0$, then
	$\langle \wt(T'),\alpha^\vee\rangle< 0$ and so $e_{\alpha}(T')\neq 0$. In both cases, it follows that $e_\alpha(T')=T$ and $f_\alpha(T)=T'\neq 0$.
\end{proof}

\begin{remark}
	It is immediate from the definition that the operators $\ftil_\alpha$ always preserve the LL atoms. In particular, since every weight in an atom has multiplicity one, we see that for any $\alpha,\beta\in \Phi_+$ and $T\in \calB(\lambda)$, we have  $\ftil_\alpha\ftil_\beta(T)=\ftil_\beta\ftil_\alpha(T)$ as long as they are both well-defined and non-zero.

	On the other hand, in general, the operators $f_\alpha$ do not preserve the atomic decomposition. This hints to the following  interesting generalization of $\bbB_1(\lambda)$.
	
	 For every $i$, we can construct a graph $\bbB_i(\lambda)$ with the elements of $\calB(\lambda)$ as vertices and where there is an edge between $T$ and $T'$ if and only if $T=w(T')$ for some $w\in W$ or $T=f_{n-j}(T')$ for $j<i$. Clearly, the connected components of $\bbB_i(\lambda)$ are union of connected components of $\bbB_{i-1}(\lambda)$, so in this way we obtain a sequence of partitions of $\calB(\lambda)$, each coarser than the previous one. These connected components of $\bbB_i(\lambda)$ seem in many cases to be related to the $i$-th pre-canonical basis introduced in \cite{LPPPre}.
\end{remark}

\subsection{The Atomic Number}

\begin{definition}\label{atomicdef}
	For $T\in \calB(\lambda)$ we define the \emph{atomic number} $\shift(T)\in \bbZ$ of $T$ by 
	\[\shift(T)=-\langle \wt(T),\rho^\vee\rangle+\sum_{\alpha \in \Phi_+} \phi_\alpha(T)=\langle \wt(T),\rho^\vee\rangle+\sum_{\alpha \in \Phi_+} \eps_\alpha(T).\]
\end{definition}

The goal of this section is to show that the atomic number is constant along LL atoms. The atomic number plays a direct role in the definition of the charge.
Moreover, in \Cref{KLinN}, we show that the atomic numbers can be used to express Kazhdan--Lusztig basis $\{\undH_\lambda\}$ of the Hecke algebra in terms of the \emph{atomic basis} $\{\bfN_\mu\}_{\mu \in X}$ of \cite[(2.4)]{LPPPre}.

\begin{prop}\label{atomicthm}
	The atomic number $Z$ is constant along LL atoms.
\end{prop}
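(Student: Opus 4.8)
The plan is to use \Cref{ccareLL}: the LL atoms are the connected components of $\bbB_1(\lambda)$, whose edges come from the $W$-action and from $f_n$, so it suffices to show that $Z$ is invariant under the $W$-action and under $f_n$ wherever the latter is nonzero. Throughout I would use $Z(T)=\langle\wt(T),\rho^\vee\rangle+\sum_{\alpha\in\Phi_+}\eps_\alpha(T)$, which by \eqref{phi-eps} and $\sum_{\alpha\in\Phi_+}\alpha^\vee=2\rho^\vee$ also equals the symmetric expression $\frac12\sum_{\alpha\in\Phi_+}\bigl(\eps_\alpha(T)+\phi_\alpha(T)\bigr)$.

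I would first isolate the one genuinely computational ingredient as a rank-$2$ lemma: in type $A_2$, the function $\eps_{\alpha_1}+\eps_{\alpha_1+\alpha_2}$ is constant along $\alpha_2$-strings (equivalently, $Z$ is constant along $\alpha_2$-strings in type $A_2$). This I would prove by the explicit rank-$2$ crystal calculus used for \Cref{21explicit} (cf.\ \cite{BSCrystal}): in coordinates $T=f_1^af_2^bf_1^cv$ the operators $e_1,f_1$ act trivially on the first entry, so one only needs to track $\eps_{\alpha_1}$, $\eps_{\alpha_1+\alpha_2}=\eps_{\alpha_2}\circ s_1$, and the action of $f_2$ along an $\alpha_2$-string, and then check the asserted constancy directly. (I note that the naive identity $s_1e_2s_1=s_2e_1s_2$ is in fact \emph{false}, so this step really cannot be replaced by formal bookkeeping.) Two consequences in type $A_2$: $Z$ is then also $s_2$-invariant, since $s_2$ reverses $\alpha_2$-strings; and $s_1$-invariance of $Z$ is already formal, following from the definition $e_{\alpha_1+\alpha_2}=s_1e_2s_1$ together with $\eps_{\alpha_1}(s_1T)=\phi_1(T)$.

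For $f_n$-invariance in type $A_n$: since $f_n$ raises $\eps_{\alpha_n}$ by $1$ and lowers $\langle\wt(T),\rho^\vee\rangle$ by $1$, these cancel, so it remains to see that $\sum_{\alpha\neq\alpha_n}\eps_\alpha$ is constant along $\alpha_n$-strings. For $\alpha=\alpha_{j,k}$ with $k\le n-2$, the operator $f_n$ commutes with both $e_\alpha$ and $f_\alpha$ — writing $e_\alpha=we_kw^{-1}$ with $w=s_j\cdots s_{k-1}$, $f_n$ commutes with each $s_\ell$, $\ell\le k-1$, and with $e_k,f_k$ — and then a short argument on the $\alpha$-string through $T$ (if $f_n$ annihilates a member of that string it annihilates all members below it, hence none above $T$, and as $f_n$ commutes with $e_\alpha$ one concludes) gives $\eps_\alpha(f_nT)=\eps_\alpha(T)$. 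The surviving roots $\alpha_{j,n-1}$ and $\alpha_{j,n}$ for $1\le j\le n-1$ are handled in pairs: conjugating $e_{\alpha_{j,n-1}}$ and $e_{\alpha_{j,n}}$ by $s_j\cdots s_{n-2}$ (which commutes with $f_n$) turns the identity $\eps_{\alpha_{j,n-1}}(f_nT)+\eps_{\alpha_{j,n}}(f_nT)=\eps_{\alpha_{j,n-1}}(T)+\eps_{\alpha_{j,n}}(T)$ into exactly the rank-$2$ lemma above.

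For $W$-invariance it is enough to treat a simple reflection $s_i$. Since $s_i$ reverses $\alpha_i$-strings, $\eps_{\alpha_i}(s_iT)=\phi_i(T)=\eps_i(T)+\langle\wt(T),\alpha_i^\vee\rangle$ compensates the change in $\langle\wt(T),\rho^\vee\rangle$, so it remains to see that $\sum_{\alpha\neq\alpha_i}\eps_\alpha$ is $s_i$-invariant; I would group $\Phi_+\setminus\{\alpha_i\}$ into $\langle s_i\rangle$-orbits. For a fixed root $\alpha$ ($\langle\alpha,\alpha_i^\vee\rangle=0$): writing $\alpha=\alpha_{j,k}$, $e_\alpha=we_kw^{-1}$, either $s_i$ commutes with $w$ and $e_k$ (when $i\ge k+2$ or $i\le j-2$), or $j<i<k$ and then $s_iw$ is again an admissible representative of $e_\alpha$ by the remark preceding \Cref{21explicit}; either way $s_ie_\alpha s_i=e_\alpha$, so $\eps_\alpha$ is unchanged. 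For a $2$-element orbit $\{\alpha,s_i\alpha\}$ with $\alpha=\alpha_{j,k}$: if the largest support index is preserved, i.e.\ $i\in\{j-1,j\}$, then $s_iw$ is an admissible representative of $e_{s_i\alpha}$ (again by that remark), so $s_ie_\alpha s_i=e_{s_i\alpha}$ and the two summands are exchanged; if it changes ($i=k$ or $i=k+1$), one conjugates by $s_j\cdots s_{k-2}$ (resp.\ $s_j\cdots s_{k-1}$) to reduce the pair-identity to the $s_2$-invariance of $\eps_{\alpha_1}+\eps_{\alpha_1+\alpha_2}$ in type $A_2$, which is part of the rank-$2$ lemma. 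Altogether $Z$ is invariant under $f_n$ and under every simple reflection, hence constant on the LL atoms. The main obstacle is precisely the rank-$2$ lemma: every other step is formal manipulation with the $W$-action and with the admissible representatives $we_kw^{-1}$ of the modified crystal operators, but the $A_2$ statement genuinely needs the explicit description of rank-$2$ crystals.
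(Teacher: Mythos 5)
Your argument is correct and follows the same architecture as the paper's proof: reduce via \Cref{ccareLL} to invariance under each $s_i$ and under $f_n$, split $\Phi_+$ into $\langle s_i\rangle$- (resp.\ $\langle s_n\rangle$-) orbits, dispose of the orthogonal roots and of the ``bottom-index'' pairs by formal conjugation with admissible representatives $we_kw^{-1}$, and funnel the remaining pairs into one explicit rank-$2$ string computation. The only difference is which rank-$2$ identity carries the computational weight --- you prove constancy of $\eps_{\alpha_1}+\eps_{\alpha_1+\alpha_2}$ along $\alpha_2$-strings, whereas the paper's \Cref{21explicit} proves constancy of $\eps_1+\phi_2$ along $(\alpha_1+\alpha_2)$-strings --- and your variant is a true, equivalent-in-strength statement that is checkable by the same calculus from \cite{BSCrystal}.
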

\begin{proof}
	By \Cref{ccareLL}, we only need to show that $Z$ is constant on $W$-orbits and $\alpha_n$-strings. 
	
	Let $T\in \calB(\lambda)$. We divide the proof in two parts. In Part \ref{shiftW} we show that, for any $i$, we have $\shift(s_i(T))=\shift(T)$ and in Part \ref{shiftfn} we show that $\shift(f_n(T))=\shift(T)$.

\begin{enumerate}
	\item \label{shiftW} \emph{We have $\shift(s_i(T))=\shift(T)$ for any $i$.}

	We need to show that for any $i$ we have $\shift(s_i(T))=\shift(T)$. Notice that $s_i(\alpha_{j,k})= \alpha_{j,k}$ unless $i\in \{j-1,j,k,k+1\}$. Let $\phi_{j,k}:=\phi_{\alpha_{j,k}}$.	
	We have
	\begin{align}\nonumber \shift(T)&-\shift(s_i(T))= \langle s_i(\wt(T))-\wt(T),\rho^\vee\rangle +\sum_{\alpha\in \Phi_+}\left(\phi_\alpha(T)-\phi_\alpha(s_i(T))\right)\\
	=&\langle s_i(\wt(T))-\wt(T),\rho^\vee\rangle+\phi_i(T)-\phi_i(s_i(T))\label{row1}\\
	&+ \sum_{\substack{\alpha\in \Phi_+\\ s_i(\alpha)=\alpha}}\left(\phi_{\alpha}(T)-\phi_{\alpha}(s_i(T))\right)\label{row2}\\
	&+ \sum_{k\geq i+1}\left(\phi_{i,k}(T)-\phi_{i,k}(s_i(T))\right)+ \sum_{k\geq i+1}\left(\phi_{i+1,k}(T)-\phi_{i+1,k}(s_i(T))\right)\label{row3}
	\\
	&+ \sum_{j\leq i-1}\left(\phi_{j,i}(T)-\phi_{j,i}(s_i(T))\right)+ \sum_{j\leq i-1}\left(\phi_{j,i-1}(T)-\phi_{j,i-1}(s_i(T))\right).\label{row4}
	\end{align}
	We show that each of the rows \eqref{row1}-\eqref{row4} in the above equation separately vanishes.
	\begin{enumerate}[leftmargin=2cm]
		\item[\textbf{Row \eqref{row1}.}] We have $\langle s_i(\wt(T))-\wt(T),\rho^\vee\rangle=-\langle \wt(T), \alpha_i^\vee\rangle$. We have $\phi_i(s_i(T))=\eps_i(T)$ and, by \eqref{phi-eps}, also
		$\phi_i(T)-\eps_i(T)=\langle \wt(T), \alpha_i^\vee\rangle$.
		\item[\textbf{Row \eqref{row2}.}] We claim that $\phi_\alpha(s_i(T))=\phi_\alpha(T)$ for any $\alpha\in \Phi_+$ such that $s_i(\alpha)=\alpha$.
		
		Assume $\alpha=\alpha_{j,k}$ and let $w:=s_j s_{j+1} \ldots s_{k-1}$ so that $\phi_\alpha=\phi_k\circ w^{-1}$.
		If $i\leq j-2$ or $i\geq k+2$ the claim is clear since $s_i$ commutes with both $f_k$ and $w^{-1}$.
		
		Assume now that $j<i<k$. Then we have $w^{-1}s_i=s_{i-1}w^{-1}$ and since $s_{i-1}$ commutes with $f_k$ we obtain \[\phi_\alpha(s_i(T))=\phi_k(w^{-1}s_i(T))=\phi_k(w^{-1}(T))=\phi_\alpha(T).\]

		\item[\textbf{Row \eqref{row3}.}] 
		This follows from $\phi_{i,k}(s_i(T))=\phi_k(s_{k-1}\ldots s_{i+1}(T))=\phi_{i+1,k}(T)$ and \break $\phi_{i+1,k}(s_i(T))=\phi_k(s_{k-1}\ldots s_i(T))=\phi_{i,k}(T)$.

		\item[\textbf{Row \eqref{row4}.}]
		Fix $j\leq i-1$. Let $w = s_j s_{j+1}\ldots s_{i-1}$ and $T':=w^{-1}(T)$. 
		We have \[\phi_{j,i}(T)=\phi_i(T'), \qquad \phi_{j,i-1}(T)=\phi_{i-1}(s_{i-1}w^{-1}(T))=\eps_{i-1}(T'),\]
		\[\phi_{j,i}(s_i(T))=\phi_i(s_{i-1}s_{i-2}\ldots s_j s_i(T))=\phi_i(s_{i-1} s_i s_{i-1}(T')),\]
		\[\phi_{j,i-1}(s_i(T))=\eps_{i-1}(s_{i-1}s_i s_{i-2} \ldots s_j(T))=\eps_{i-1}(s_{i-1}s_is_{i-1} (T')).\]
		
		The vanishing of Row \eqref{row4} follows from the following claim. For any $T'\in \calB(\lambda)$ we have
		\[\phi_i(T')+\eps_{i-1}(T')=\phi_i(s_{i-1}s_is_{i-1} T')+\eps_{i-1}(s_{i-1}s_is_{i-1}T').\]	
		This is a consequence of \Cref{21explicit}. In fact, $T'$ and $s_{i-1}s_is_{i-1}(T')$ belong to the same $\alpha_{i-1}+\alpha_i$-string.
	\end{enumerate}

\item	\emph{We have $\shift(T)=\shift(f_n(T))$.}\label{shiftfn}

	The proof is similar to Part \ref{shiftW}. We divide all the positive roots into $s_n$-orbits and consider them separately.
	\begin{itemize}
		\item If $\beta$ is a root such that $s_n(\beta)=\beta$, then
		\begin{equation}
		\label{sb=b} \phi_\beta(f_n(T))=\phi_\beta(T).
		\end{equation}
		\ In fact, we have $\beta=\alpha_{j,k}$ for some $k\leq n-2$ and $f_n$ commutes with $f_k$ and $s_{k-1}s_{k-2}\ldots s_j$.
		\item If $\beta\neq \alpha_n$ and $s_n(\beta)\neq \beta$ we claim that
		\begin{equation}\label{phibeta}
		\phi_\beta(T)+\phi_{s_n(\beta)}(T)=
		\phi_\beta(f_n(T))+\phi_{s_n(\beta)}(f_n(T)).
		\end{equation}
		We can assume $\beta=\alpha_{j,n}$ and $s_n(\beta)=\alpha_{j,n-1}$.
		Let $w=s_j\ldots s_{n-1}$ so that $\phi_\beta=\phi_n \circ w^{-1}$ and $\phi_{s_n(\beta)}=\phi_{n-1} \circ s_{n-1}w^{-1}=\eps_{n-1}\circ w^{-1}$.
		Let $T'=w^{-1}(T)$. We have
		\[ \phi_\beta(f_n(T))= \phi_n(w^{-1} f_n w(T')) = \phi_n(f_{w^{-1}(\alpha_n)}(T'))=\phi_n(f_{\alpha_{n-1}+\alpha_n}(T')).\]

		\Cref{phibeta} can be rewritten as
		\[\phi_n(T')+\eps_{n-1}(T')=\phi_n(f_{\alpha_{n-1}+\alpha_n}(T'))+\eps_{n-1}(f_{\alpha_{n-1}+\alpha_n}(T')). \]
		Finally, this follows directly from \Cref{21explicit}.
		\item Finally, if $\beta=\alpha_n$, then \begin{equation}\label{b=n}\phi_n(f_n(T))=\phi_n(T)-1\end{equation}
	\end{itemize}
	
	Notice that $\langle \wt(f_n(T)),\rho^\vee\rangle=\langle \wt(T),\rho^\vee\rangle-1$. The claim easily follows by combining \eqref{sb=b},\eqref{phibeta} and \eqref{b=n}. \qedhere
\end{enumerate}
\end{proof}

\section{Twisted Bruhat Graphs}\label{TBGsec}

In this section we study twisted Bruhat graphs in affine type $A$. Twisted Bruhat orders and graphs were first considered by Dyer in \cite{DyeHecke}. Our main goal is to prove \Cref{Gammam}, which is a technical result about the number of arrows in a portion of the twisted Bruhat graph given by all the elements smaller than a certain $\lambda\in X_+$.

\subsection{Biclosed Sets and Twisted Bruhat Orders}

Let $X^\vee$ denote the dual of $X$, i.e. the cocharacter lattice of $T$. Let $\Phi^\vee\subset X^\vee$ be its dual system (so both $\Phi$ and $\Phi^\vee$ are root systems of type $A_n$). For $\beta\in \Phi$, let $\beta^\vee$ denote the corresponding coroot in $\Phi^\vee$. We denote by $\Phi_+$ the set of positive roots and by $\Phi^\vee_+$ the set of positive coroots. Let $\alpha_i$ and $\alpha_i^\vee$ denote the simple root and coroots. For $i\leq j$, we also define $\alpha_{i,j}:=\alpha_i+\ldots +\alpha_j$ and $\alpha^\vee_{i,j}:=\alpha^\vee_i+\ldots +\alpha^\vee_j$. Let $\varpi_1,\ldots,\varpi_n$ denote the fundamental weights in $X$.

Let $\affW$ denote the affine Weyl group $\affW\cong W \ltimes \bbZ\Phi$ and let 
\[\affPhi^\vee:=\left\{ m\delta +\beta^\vee \mid \beta^\vee \in \Phi^\vee,\; m \in \bbZ\right\} \subset X^\vee \oplus \bbZ \delta\]
denote the set of real roots of the affine root system of $PGL_{n+1}(\bbC)$. The positive real roots are $\affPhi^\vee_+=\{m\delta+\beta^\vee \mid m>0\} \cup \Phi^\vee_+$ and are in bijection with the reflections in $\affW$. If $t\in \affW$ is a reflection we denote by $\alpha_t^\vee\in \affPhi^\vee_+$ the corresponding root. Then $\affW$ is a Coxeter group with simple reflections $s_0,s_1,\ldots,s_n$, where $s_0$ is the reflection corresponding to $\delta-\theta^\vee$, with $\theta^\vee$ the highest root in $\affPhi$ (see  \cite[\S 1]{ChargeGen} for more details).

\begin{definition}
	A subset $B\cu \affPhi^\vee_+$ of positive roots is said to be \emph{closed} if for any $\alpha_1,\alpha_2\in B$ and for any $k_1,k_2\in \bbR_{\geq 0}$ such that $k_1\alpha_1+k_2\alpha_2\in \affPhi^\vee_+$ we have $k_1\alpha_1+k_2\alpha_2\in B$. 
	
	We say that a set $B$ is \emph{biclosed} if both $B$ and $\affPhi^\vee_+\setminus B$ are closed.
\end{definition}

For $w\in \affW$ we denote by \[N(w):=\{\alpha \in \affPhi^\vee_+\mid w^{-1}(\alpha)\in \affPhi^\vee_-\}\] the set of inversions.
If $w=s_{i_1}\ldots s_{i_k}$ is a reduced expression for $w$ then \[N(w)=\{\alpha^\vee_{i_1},s_{i_1}(\alpha^\vee_{i_2}),\ldots,s_{i_1}s_{i_2}\ldots s_{i_{k-1}}(\alpha^\vee_{i_k})\}.\] 
By \cite[Lemma 4.1(d)]{DyeWeak}, every set $N(w)$ for $w\in \affW$ is biclosed and moreover, a biclosed set is finite if and only if it is of the form $N(w)$ for some $w\in \affW$.

\begin{definition}
	An infinite word $x:=s_{i_1}s_{i_2} \ldots s_{i_k}\ldots$ in $\affW$ is called \emph{reduced} if for any $j$ the expression $x_j:=s_{i_1}s_{i_2} \ldots s_{i_j}$ is reduced. 
	In this case we define $N(x):=\bigcup_{j=1}^{\infty}N(x_j)$.
\end{definition}

If $x$ is an infinite reduced word, then its set of inversions $N(x)$ is also biclosed.

\begin{definition}
	Given a biclosed set $B$ we can define the \emph{$B$-twisted length} of $w\in \affW$ as 
	\begin{equation}\label{elllambda}\ell_B(w):=\ell(w)-2|N(w)\cap B|+|B|.
	\end{equation}
	(We neglect the term $|B|$ if $|B|=\infty.$) 
	
	We define the \emph{$B$-twisted Bruhat order} $<_B$ as the order generated by $w<_B t w$ for $t\in \affW$ a reflection and $\ell_B (w) < \ell_B(tw)$. 
\end{definition}
We remark that this slightly differs from the Dyer's original definition in \cite[\S 1]{DyeHecke} where he defines the $B$-twisted length of an element $w$ as 
\[\ell^{\text{Dyer}}_B(w):=\ell(w)-2|N(w^{-1})\cap B|=\ell_B(w^{-1})-|B|.\]

\newcommand{\undw}{\underline{w}}
We consider the action of $\affW$ on $\affPhi^\vee_+$ defined as \[
\undw(\alpha^\vee)=\begin{cases} w(\alpha^\vee) &\text{if }w(\alpha^\vee)\in \affPhi^\vee_+\\
	-w(\alpha^\vee) &\text{if }w(\alpha^\vee)\in \affPhi^\vee_-.
\end{cases}\]
Equivalently, we have $\undw\alpha_t^\vee=\alpha_{wtw^{-1}}^\vee$ for any reflection $t\in \affW$. Observe that $\undw(N(w^{-1}))=N(w)$.

\begin{lemma}\label{ordersymdiff}
	 For any reflection $t\in T$ we have $tw\leq_B w\iff \alpha_t^\vee\in N(w)\ominus B$, where $\ominus $ denotes here the symmetric difference of sets. 
\end{lemma}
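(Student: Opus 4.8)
The plan is to unwind the definition of the $B$-twisted Bruhat order and reduce the claim to a statement about how $N(\cdot)$ interacts with left multiplication by a reflection. Recall that $<_B$ is generated by the covering-type relations $w <_B tw$ whenever $\ell_B(w) < \ell_B(tw)$. So the first step is to compare $\ell_B(w)$ and $\ell_B(tw)$ directly using the formula $\ell_B(w) = \ell(w) - 2|N(w)\cap B| + |B|$ (the $|B|$ term is harmless and cancels, even in the infinite case one should read the difference $\ell_B(tw)-\ell_B(w)$ as a well-defined integer). Thus $\ell_B(tw) - \ell_B(w) = \big(\ell(tw)-\ell(w)\big) - 2\big(|N(tw)\cap B| - |N(w)\cap B|\big)$, and I want to show this difference is positive exactly when $\alpha_t^\vee \notin N(w)\ominus B$, which (since $tw\le_B w \iff \ell_B(tw)\le \ell_B(w)$, i.e.\ the difference is $\le 0$) gives the stated equivalence.

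The key computation is to understand $N(tw)$ in terms of $N(w)$ when $t$ is a reflection. Writing $\alpha := \alpha_t^\vee \in \affPhi^\vee_+$, one has the standard fact that $N(tw)$ and $N(w)$ differ only near $\alpha$: more precisely, if $\alpha\in N(w)$ then $N(tw) = \underline{t}\big(N(w)\setminus\{\alpha\}\big)$ and $\ell(tw)=\ell(w)-1$... but actually the cleaner route, and the one I would take, is to use the already-established action identity $\underline{w}(N(w^{-1})) = N(w)$ together with the relation $N(tw) = \alpha_t^\vee \ominus \underline{t}(N(w))$ (this is the affine analogue of the finite-type formula $N(tw) = \{\alpha\}\ominus t(N(w))$ restricted to positive roots via the bar-action). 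From this, intersecting with $B$ and using that $\underline{t}$ is a bijection on $\affPhi^\vee_+$, one can track how $|N(\cdot)\cap B|$ changes: the only root whose membership status can flip between $N(w)$ and $N(tw)$ in a way that is not "balanced" by $\underline{t}$ is $\alpha$ itself. Carefully bookkeeping the four cases according to whether $\alpha\in N(w)$ and whether $\alpha\in B$ should show that $\ell_B(tw)-\ell_B(w) = \pm 1$ with the sign being $+1$ precisely when $\alpha \in N(w)\ominus B$ fails, i.e.\ when $\alpha\notin N(w)\ominus B$.

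Alternatively — and this is probably the slicker argument to write down — I would invoke Dyer's description of the twisted Bruhat order via biclosed sets: $w \le_B tw$ iff the "inversion set of $w$ relative to $B$", namely $N(w)\ominus B$, does not contain $\alpha_t^\vee$, which is exactly the defining property that makes $<_B$ a reflection order refining the combinatorics of $N(w)\ominus B$. Concretely, since by \cite{DyeHecke} the function $w\mapsto N(w)\ominus B$ is injective and $N(tw)\ominus B = \alpha_t^\vee \ominus (N(w)\ominus B)$ up to the $\underline{t}$-twist, the element $tw$ is obtained from $w$ by "adding or removing $\alpha_t^\vee$" from the relative inversion set, and one goes up in $<_B$ exactly when one adds it, i.e.\ when $\alpha_t^\vee\notin N(w)\ominus B$. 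Hence $tw\le_B w \iff \alpha_t^\vee\in N(w)\ominus B$.

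The main obstacle is getting the affine/infinite-$B$ bookkeeping exactly right: when $B$ is infinite the term $|B|$ in $\ell_B$ is dropped, so one must argue with the \emph{difference} $\ell_B(tw)-\ell_B(w)$ throughout and check it is still a well-defined integer equal to $\pm 1$; and one must handle the $\underline{t}$-action carefully, since $N(tw)$ is $\underline{t}$ applied to (a symmetric difference involving) $N(w)$, not literally a symmetric difference of $N(w)$ with $\{\alpha_t^\vee\}$, so the intersections with $B$ need $B$ to be compared with $\underline{t}(B)$ — but biclosedness is not preserved by $\underline{t}$ in general, which is why the clean statement is phrased via $N(w)\ominus B$ rather than via lengths. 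I expect the cleanest writeup simply cites the relevant lemma from \cite{DyeHecke} (the correspondence between $<_B$ and containment of roots in the relative inversion set $N(w)\ominus B$) and then notes that the sign convention here matches Dyer's up to the harmless reparametrization $\ell_B^{\text{Dyer}}(w) = \ell_B(w^{-1}) - |B|$ recorded above, so that the equivalence transfers verbatim.
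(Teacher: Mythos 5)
Your preferred route --- citing Dyer's propositions from \cite{DyeHecke} and transferring them via the reparametrization $\ell_B^{\mathrm{Dyer}}(w)=\ell_B(w^{-1})-|B|$ together with the identity $\underline{w}(N(w^{-1}))=N(w)$ --- is exactly the proof the paper gives. One caveat on your first, computational sketch: for a non-simple reflection $t$ the formula $N(tw)=\{\alpha_t^\vee\}\ominus\underline{t}(N(w))$ is false (the correct version involves all of $N(t)$, and $\ell(tw)-\ell(w)$ need not be $\pm 1$), but this is harmless here since you discard that route in favor of the citation.
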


\begin{proof}
	This follows directly from \cite{DyeHecke} after the opportune translations. In fact, combining  \cite[Prop. 1.1]{DyeHecke} and  \cite[Prop. 1.2]{DyeHecke} we have that \begin{align*}\ell_B(tw)<\ell_B(w)&\iff \ell^{\text{Dyer}}_B(w^{-1}t)<\ell^{\text{Dyer}}_B(w^{-1})\\& \iff  \alpha_{w^{-1}tw}^\vee= \underline{w^{-1}}(\alpha_t^{\vee})\in N(w^{-1})\ominus  \underline{w^{-1}}( B)\\
&\iff \alpha_t^\vee \in \undw(N(w^{-1}))\ominus  B=N(w)\ominus B.		\qedhere
	\end{align*}

\end{proof}

If $B=\emptyset$ then $\ell_B$ and $\leq_B$ are the usual length and Bruhat order.
We are only interested in biclosed sets of the type $N(y)$ where $y$ is either an element of $W$ or an infinite reduced expression.

\begin{lemma}
	Let $y,v,w\in \affW$.
	\begin{enumerate}
		\item We have $\ell_{N(y)}(w)=\ell(y^{-1}w)$. 
		\item We have and $v\leq_{N(y)} w$ if and only if $y^{-1}v\leq y^{-1}w$ and $y$ is the unique minimal element with respect to $\leq_{N(y)}$. 
		\item  We have that 
		\begin{equation}\label{symmdiff}
			tw<_{N(y)} w \iff \alpha_t^\vee\in N(w)\ominus N(y)
		\end{equation}
	\end{enumerate}
\end{lemma}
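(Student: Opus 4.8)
The plan is to show that right translation by $y^{-1}$, i.e.\ the bijection $\phi\colon\affW\to\affW$, $\phi(w)=y^{-1}w$, is an isomorphism from the $N(y)$-twisted Bruhat order onto the ordinary Bruhat order; statement (1) is the numerical heart of this, and (2) and (3) will then follow formally.

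First I would prove (1). Since $y\in\affW$ has finite length, $N(y)$ is a finite biclosed set with $|N(y)|=\ell(y)$, so \eqref{elllambda} becomes
\[\ell_{N(y)}(w)=\ell(w)-2|N(w)\cap N(y)|+\ell(y)=|N(w)\ominus N(y)|.\]
On the other hand, the standard length identity $\ell(ab)=\ell(a)+\ell(b)-2|N(a^{-1})\cap N(b)|$ in a Coxeter group, applied with $a=y^{-1}$, $b=w$ (using $N((y^{-1})^{-1})=N(y)$ and $\ell(y^{-1})=\ell(y)$), gives $\ell(y^{-1}w)=\ell(y)+\ell(w)-2|N(y)\cap N(w)|=|N(w)\ominus N(y)|$. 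Comparing the two expressions yields $\ell_{N(y)}(w)=\ell(y^{-1}w)$; alternatively this is a direct rewriting of Dyer's formulas, in the same spirit as \Cref{ordersymdiff}. The one point requiring care is matching the convention $N(w)=\{\alpha\mid w^{-1}\alpha\in\affPhi^\vee_-\}$ used here with the form in which the Coxeter identity is quoted, so that no spurious inverse slips in.

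Next, for (2), I would observe that for any reflection $t\in\affW$ one has $\phi(tw)=(y^{-1}ty)\,\phi(w)$ with $y^{-1}ty$ again a reflection, and that by (1) the inequality $\ell_{N(y)}(w)<\ell_{N(y)}(tw)$ is equivalent to $\ell(\phi(w))<\ell(\phi(tw))$. Hence $\phi$ sends the relation generating $<_{N(y)}$ (that is, $w\to tw$ whenever $\ell_{N(y)}(w)<\ell_{N(y)}(tw)$) bijectively onto the relation generating the ordinary Bruhat order on $\affW$; passing to transitive closures yields $v\leq_{N(y)}w\iff y^{-1}v\leq y^{-1}w$, and in particular $\leq_{N(y)}$ is a genuine partial order. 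Since $e$ is the unique minimal element of $(\affW,\leq)$, its preimage $y=\phi^{-1}(e)$ is the unique minimal element of $(\affW,\leq_{N(y)})$. For (3), I would simply apply \Cref{ordersymdiff} with $B=N(y)$: since a reflection $t$ has $tw\neq w$, the condition $tw<_{N(y)}w$ coincides with $tw\leq_{N(y)}w$, which holds if and only if $\alpha_t^\vee\in N(w)\ominus N(y)$.

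The only genuinely non-formal ingredient is the classical Coxeter length formula invoked in (1); once that is secured, (2) and (3) are purely formal consequences of the definition of $<_B$ as the transitive closure of its generating relation. I therefore expect the main obstacle to be bookkeeping rather than conceptual: keeping the left/right inversion-set conventions and the side of multiplication consistent so that (1) comes out with exactly the right sign — after which (2) and (3) are immediate.
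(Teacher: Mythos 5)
Your proposal is correct and follows essentially the same route as the paper: both reduce (1) to the identity $\ell_{N(y)}(w)=|N(w)\ominus N(y)|$ and then identify this with $\ell(y^{-1}w)$ (the paper does so via the cocycle formula $N(y^{-1}w)=N(y^{-1})\ominus \underline{y^{-1}}(N(w))$, which is the same computation as your quoted length identity), and both deduce (2) from the matching of generating relations and (3) from \Cref{ordersymdiff}.
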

\begin{proof}
	First notice that $N(y^{-1}w)=N(y^{-1})\ominus \underline{y^{-1}}(N(w))$.
We have \begin{align*}\ell_{N(y)}(w) &=|N(w)|-2|N(w)\cap N(y)|+|N(y)|=|N(w)\ominus N(y)|=|N(w)\ominus \underline{y}N(y^{-1})|=\\ &=
	|\underline{y^{-1}}N(w)\ominus N(y^{-1})|=|N(y^{-1}w)|=\ell(y^{-1}w).\end{align*}
	This shows (1). Moreover, the twisted order $<_{N(y)}$ is generated by $w<_{N(y)}tw$ for $y^{-1}w<y^{-1}tw$ so we have (2). Statement (3) follows directly from \Cref{ordersymdiff}.
\end{proof}

\begin{remark}
	The definitions given in this section generalize straightforwardly to the 
	 extended affine Weyl group $\extW=W\ltimes X$ (cf. \cite[\S 1.4]{ChargeGen}).
	We can define the set of inversions as $N(w)=\{ \alpha^\vee\in \affPhi^\vee_+ \mid s_\alpha w<w\}$, so that it makes sense for any $w\in \extW$. Then we obtain the definition of $B$-twisted length and $B$-twisted Bruhat order on $\extW$ by simply extending \Cref{elllambda}. 
\end{remark}

\subsection{Twisting the Bruhat graph along an infinite word}\label{twistedbruhatsection}

Recall that we assumed that $\Phi$ is of type $A_n$, so $\affW$ is the affine Weyl group of type $\tilde{A}_n$. Consider $\undc = s_0s_1s_2\ldots s_n$. Then $y_\infty:=\undc \undc \undc\ldots$ is an infinite reduced expression.
Let $y_m$ be the element given by the first $m$ simple reflections in $y_{\infty}$.

We order the roots in $N(y_\infty)$ as follows:
\begin{equation}\label{reflectionorder}
\delta-\alpha_{1,n}^\vee<\delta-\alpha_{2,n}^\vee<\ldots <\delta-\alpha_n^\vee<2\delta -\alpha_{1,n}^\vee<2\delta-\alpha_{2,n}^\vee<\ldots
\end{equation}
In this way, the roots in $N(y_m)$ are precisely the first $m$ roots in \eqref{reflectionorder}.

We just write $\ell_m$ and $\leq_m$ for $\ell_{N(y_m)}$ and $\leq_{N(y_m)}$, the $y_m$-twisted length and the $y_m$-twisted Bruhat order on $\affW$. 
Recall that $X\cong \extW/W$.
\begin{definition}
Let $m\in \bbZ_{\geq0} \cup \{ \infty\}$. We regard $\lambda\in X$ as the right coset in $\extW$ containing the translation $\texttt{t}_\lambda$. We denote by $\lambda_m\in \extW$ the element of minimal $y_m$-twisted length in the coset $\lambda$ and we set $\ell_m(\lambda):=\ell_m(\lambda_m)$.

We define the twisted  Bruhat order on $X$ by setting $\mu\leq_m \lambda$ if $\mu_m\leq_m \lambda_m$.
\end{definition}

If $m\in \bbZ_{\geq 0}$ we have \[\lambda_m= y_m(y_m^{-1}\lambda)_0\]
where $(y_m^{-1}\lambda)_0$ denotes the minimal element with respect to the usual length.
We write $\ell(\lambda)$ instead of $\ell_0(\lambda)$.

The Bruhat graph $\Gamma_X$ is the graph 
whose vertices are the weights $X$, and we have an arrow $\mu\ra \lambda$ if and only if there exists $\alpha^\vee\in \affPhi^\vee$ such that $s_{\alpha^\vee}(\mu)=\lambda$ and $\mu\leq \lambda$. The Bruhat graph can be obtained as the moment graph of the affine Grassmannian of $G^\vee$ (see \cite[\S 2.3]{ChargeGen} for details).

\begin{definition}
	For every $m\in \bbZ_{\geq 0} \cup \{ \infty\}$ we define $\Gamma_X^m$, the $y_m$\emph{-twisted Bruhat graph of} $X$, to be the directed graph whose vertices are the weights $X$ and where there is an edge $\mu\ra \lambda$ if there exists $\alpha^\vee\in \affPhi^\vee$ such that $s_{\alpha^\vee}(\mu)=\lambda$ and $\mu<_m \lambda$. 
	
	Similarly, we define $\Gamma_{\extW}^m$ to be the directed graph with $\extW$ as the set of vertices and where there is an edge $w\ra v$ if there exists $\alpha^\vee\in \affPhi^\vee$ such that $s_{\alpha^\vee}w=v$ and $w<_m v$. 
\end{definition}

Because of \eqref{symmdiff}, $\Gamma_{\extW}^m$ and $\Gamma_X^m$ can be obtained respectively from $\Gamma_{\extW}$ and $\Gamma_X$ by inverting the direction of all the arrows $x\ra s_{\alpha^\vee} x$ such that $\alpha^\vee\in N(y_m)$.

\begin{definition}
	For  $\mu\in X$, we denote by $\Arr_m(\mu)$ the number of arrows directed to $\mu$ in $\Gamma_X^m$. 
\end{definition}

For $\lambda\in X$, we know that the length of the minimal element in the corresponding coset is equal to the number of arrows in the moment graph $\Gamma_X$ pointing to $\lambda$. The same holds for the twisted graphs.

\begin{lemma}\label{Xorder}
Let $m\in \bbZ_{\geq 0}$, $x\in \extW$ and $\lambda \in X$.
	\begin{enumerate}
		\item The twisted length $\ell_m(x)$ is equal to the number of arrows directed to $x$ in  $\Gamma^m_{\extW}$.
		\item The twisted length $\ell_m(\lambda)$ is equal to $\Arr_m(\lambda)$, the number of arrows directed to $\lambda$ in $\Gamma_X^m$.
	\end{enumerate}	
\end{lemma}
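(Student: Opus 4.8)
The plan is to deduce both statements from the untwisted case by left‑translating the whole picture by $y_m^{-1}$.

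For part (1), I would first observe that left multiplication $x\mapsto y_m^{-1}x$ is a bijection of $\extW$ which carries the twisted graph $\Gamma^m_{\extW}$ isomorphically onto the ordinary Bruhat graph $\Gamma_{\extW}$. Two things must be checked: (a) the order is transported correctly, i.e. $w<_m v\iff y_m^{-1}w<y_m^{-1}v$, which is the criterion $v\le_{N(y)}w\iff y^{-1}v\le y^{-1}w$ recorded just before \Cref{ccareLL} in the version for $\extW$ (this is where the remark extending these constructions to the extended affine Weyl group is invoked); and (b) adjacency, together with its labelling by reflections, is preserved, since $v=s_{\alpha^\vee}w$ forces $y_m^{-1}v=(y_m^{-1}s_{\alpha^\vee}y_m)(y_m^{-1}w)$ with $y_m^{-1}s_{\alpha^\vee}y_m=s_{\beta^\vee}$, $\beta^\vee=\underline{y_m^{-1}}(\alpha^\vee)$ again a real root. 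Granting this, the number of arrows into $x$ in $\Gamma^m_{\extW}$ equals the number into $y_m^{-1}x$ in $\Gamma_{\extW}$, which by the standard fact that the in‑degree of a vertex of a Bruhat graph equals its length is $\ell(y_m^{-1}x)$; and $\ell(y_m^{-1}x)=\ell_{N(y_m)}(x)=\ell_m(x)$ by the identity $\ell_{N(y)}(\cdot)=\ell(y^{-1}\cdot)$ recalled above. This gives (1).

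For part (2) I would run the same argument on the coset space $X\cong\extW/W$. Left multiplication by $y_m^{-1}$ permutes left cosets, hence is a bijection $X\to X$; using the formula $\lambda_m=y_m\,(y_m^{-1}\lambda)_0$ one gets $y_m^{-1}\mu_m=(y_m^{-1}\mu)_0$, so the chain $\mu<_m\lambda\iff\mu_m<_m\lambda_m\iff y_m^{-1}\mu_m<y_m^{-1}\lambda_m\iff y_m^{-1}\mu<y_m^{-1}\lambda$ shows that $\nu\mapsto y_m^{-1}\nu$ is an isomorphism $\Gamma^m_X\xrightarrow{\ \sim\ }\Gamma_X$ (adjacency being preserved exactly as in (b) above, now for the action on $X$). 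Hence $\Arr_m(\lambda)$ equals the number of arrows into $y_m^{-1}\lambda$ in the moment graph $\Gamma_X$, which by the already‑quoted untwisted fact is $\ell_0(y_m^{-1}\lambda)=\ell\big((y_m^{-1}\lambda)_0\big)$; and $\ell_m(\lambda)=\ell_m(\lambda_m)=\ell(y_m^{-1}\lambda_m)=\ell\big((y_m^{-1}\lambda)_0\big)$, so the two agree.

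I do not expect a serious obstacle here: once the translation isomorphism $\Gamma^m\cong\Gamma^0$ is in place, part (1) is immediate from the classical in‑degree$=$length fact for Bruhat graphs and part (2) is immediate from the stated analogue of that fact for the affine Grassmannian moment graph. The only care needed is bookkeeping — verifying that the length identity and the order criterion cited above remain valid for the extended group $\extW$, and keeping in mind throughout that $y_m^{-1}\cdot$ is used purely as a bijection of the underlying sets $\extW$ and $\extW/W$, not as a group homomorphism.
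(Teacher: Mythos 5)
Your proof is correct. Part (1) is essentially the paper's argument: both of you transport the problem along $y_m^{-1}$ (you phrase it as left translation of the graph, the paper as conjugation of the labelling reflections by $y_m$) and then invoke $\ell_{N(y_m)}(x)=\ell(y_m^{-1}x)$ together with the classical count of reflections $t$ with $tx<x$. For part (2), however, you take a genuinely different route. You push the translation isomorphism down to the coset space, checking via $\lambda_m=y_m(y_m^{-1}\lambda)_0$ that $\nu\mapsto y_m^{-1}\nu$ identifies $\Gamma^m_X$ with $\Gamma_X$, and then quote the untwisted in-degree-equals-length fact for the moment graph of the affine Grassmannian. The paper instead argues directly: it counts the $\ell_m(\lambda)$ reflections $r$ with $r\lambda_m<_m\lambda_m$ supplied by part (1), and shows these yield pairwise distinct $W$-cosets using the dichotomy for reflection subgroups from Dyer et al. (if $r_1r_2$ lies in a reflection subgroup then either both $r_i$ do or $r_1=r_2$), the point being that $r_1,r_2\in\lambda_mW\lambda_m^{-1}$ would contradict minimality of $\lambda_m$. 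Your reduction is shorter and cleaner, but be aware of what it buys and what it hides: the untwisted statement you cite is literally the $m=0$ instance of the lemma, so your argument is non-circular only because that instance is the classical fact the paper itself asserts without proof just before the lemma; the genuinely nontrivial step — that distinct reflections below a minimal coset representative land in distinct $W$-cosets — is exactly the content of that classical fact, so you have relocated it into a black box rather than eliminated it, whereas the paper's proof is self-contained and establishes the $m=0$ case along the way. (One small bookkeeping slip: the criterion $v\leq_{N(y)}w\iff y^{-1}v\leq y^{-1}w$ and the identity $\ell_{N(y)}(w)=\ell(y^{-1}w)$ are recorded in the lemma of Section 2 on twisted lengths, not near the result on connected components of $\bbB_1(\lambda)$; and their extension to $\extW$ is the content of the remark closing that subsection, which you correctly note must be invoked.)
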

\begin{proof}

	We begin with $x\in \extW$. For any reflection $r$, if $ry_m^{-1}x<y_m^{-1}x$, then $y_m r y_m^{-1}x<_m x$. Hence, conjugation by $y_m$ induces a bijection between the arrows pointing to $y_m^{-1}x$ in $\Gamma_{\extW}$ and the arrows pointing to $x$ in  $\Gamma_{\extW}^m$.

	Consider now $\lambda\in X$.
	We recall the following fact about reflections from \cite[Theorem 1.4]{BDSWNote}. If $r_1,r_2$ are reflections in a Coxeter group and the product $r_1r_2$ belongs to a reflection subgroup $W'$, then either $r_1,r_2\in W'$ or $r_1=r_2$.
	
	Let $\lambda_m\in \extW$ be the minimal element in the coset of $\lambda$ with respect to the length function $\ell_m$. 
	We have an edge $\mu\to \lambda$ in $\Gamma^m_X$ if and only if there exists $\alpha^\vee\in \affPhi^\vee$ such that $s_{\alpha^\vee}(\mu)=\lambda$ and $\ell_m(\mu_m)<\ell_m(\lambda_m)$.

	 Recall, by \cite[Remark 2.8]{ChargeGen}, that $s_{\alpha^\vee}(\mu)=\lambda$ is equivalent to  $s_{\alpha^\vee}\texttt{t}_\mu W=\texttt{t}_\lambda W$, where $\texttt{t}_{\lambda}\in \extW$ denotes the translation by $\lambda$. Notice that we have $\texttt{t}_{\lambda}W=\lambda_m W$, so $s_{\alpha^\vee}(\mu)=\lambda$ is equivalent to $s_{\alpha^\vee}\mu_mW=\lambda_m W$.

	From the first part, we know that there are $\ell_m(\lambda)$ reflections $r$ such that $r\lambda_m<_m \lambda_m$.The claim  now follows, since distinct reflections lead to different cosets. 
	Otherwise, if for two distinct reflections $r_1,r_2$ there exist $w_1,w_2\in W$ such that $r_1\lambda_m w_1=r_2\lambda_m w_2$, then $r_1r_2\in \lambda_m W\lambda_m^{-1}$, hence $r_1,r_2 \in \lambda_m W \lambda_m^{-1}$. It follows that $r_1\lambda_m, r_2\lambda_m$ are in the same coset of $\lambda_m$, but this is impossible since $\lambda_m$ is minimal.
\end{proof}

We write $s'_m$ for the $m$-th simple reflection occurring in $y_\infty$ (i.e., $s'_m=s_{m-1}$, with the index $m-1$ regarded $\mod{n+1}$) so we have $y_ms'_{m+1}=y_{m+1}$.
Consider the reflection \[t_{m+1}:=y_{m+1}y_m^{-1} =y_ms'_{m+1} y_m^{-1}.\] Notice that $\{\alpha^\vee_{t_{m+1}}\}= N(y_{m+1})\setminus N(y_m)$.
For every $x\in \extW$ we have
 
\begin{equation}\label{tm}
\ell_{m+1}(x)=\ell_m(t_{m+1}x)=\ell(s'_{m+1}y_m^{-1}x)=\ell_m(x)\pm 1.
\end{equation}

We abbreviate $t=t_{m+1}$.

\begin{lemma}\label{arrowsX}
	Assume $\mu<_m t\mu$, so that $\ell_m(t\mu)=\ell_m(\mu)+1$.
	Then conjugation by $t$ induces a bijection
	\[ \left\{\begin{array}{c}
	\text{ arrows directed to }\mu\\\text{ in }\Gamma_X^m
	\end{array}\right\}\xleftrightarrow{\sim}\left\{\begin{array}{c}
	\text{ arrows directed to }t\mu\\\text{ in }\Gamma_X^m
	\end{array}\right\} \setminus \{ \mu \ra t\mu\}.\]
	In particular, we have $\Arr_m(\mu)=\Arr_m(t\mu)-1$.
\end{lemma}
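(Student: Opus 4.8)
The plan is to derive the cardinality identity from \Cref{Xorder}(2) and then upgrade it to the claimed bijection by unwinding the reflection-parametrization of arrows underlying that lemma. By \Cref{Xorder}(2) we have $\Arr_m(\mu)=\ell_m(\mu)$ and $\Arr_m(t\mu)=\ell_m(t\mu)=\ell_m(\mu)+1$, the last equality being the hypothesis; in particular $t\mu\neq\mu$, and $\Arr_m(\mu)=\Arr_m(t\mu)-1$ follows. The first genuine step is to pin down the minimal coset representative of $t\mu$. Applying \eqref{tm} with $x=\mu_m$ gives $\ell_m(t\mu_m)=\ell(s'_{m+1}y_m^{-1}\mu_m)=\ell_m(\mu_m)\pm 1$; since $t\mu_m$ lies in the coset $t\mu$, whose minimal $\ell_m$-length is $\ell_m(t\mu)=\ell_m(\mu)+1$, the sign must be $+$, so $\ell_m(t\mu_m)=\ell_m(t\mu)$, and by uniqueness of the minimal coset representative $(t\mu)_m=t\mu_m$. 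Moreover $\mu_m<_m t\mu_m$ in $\extW$, since $\mu_m$ and $t\mu_m$ differ by the reflection $t=t_{m+1}$ and $\ell_m(t\mu_m)=\ell_m(\mu_m)+1$.

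Next recall that the proof of \Cref{Xorder}(2) in fact exhibits a bijection $r\mapsto\big[(\text{coset }r\mu_mW)\to\mu\big]$ from $R(\mu):=\{\,r\text{ reflection in }\affW: r\mu_m<_m\mu_m\,\}$ onto the set of arrows directed to $\mu$ in $\Gamma_X^m$, and likewise for $t\mu$ with $R(t\mu):=\{\,r: r(t\mu_m)<_m t\mu_m\,\}$, using $(t\mu)_m=t\mu_m$. Under these parametrizations the operation ``conjugation by $t$'' on arrows becomes the map $r\mapsto trt$ on reflections, because $(trt)(t\mu_m)=tr\mu_m$ and the coset of $tr\mu_m$ is $t$ applied to the coset of $r\mu_m$. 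I would then verify: (a) conjugation by $t$ maps $R(\mu)$ into $R(t\mu)$; (b) it misses exactly the element $t\in R(t\mu)$, noting that $t\in R(t\mu)$ since $t(t\mu_m)=\mu_m<_m t\mu_m$, that this element corresponds to the arrow $\mu\to t\mu$, and that $trt=t$ forces $r=t\notin R(\mu)$; (c) hence, being injective with $|R(\mu)|=\ell_m(\mu)=\ell_m(t\mu)-1=|R(t\mu)|-1$, conjugation by $t$ is a bijection of $R(\mu)$ onto $R(t\mu)\setminus\{t\}$, which translates into the asserted bijection on arrows, whence $\Arr_m(\mu)=\Arr_m(t\mu)-1$.

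The substantive point is (a). For $r\in R(\mu)$ we have $r\mu_m<_m\mu_m<_m t\mu_m$, hence $r\mu_m<_m t\mu_m$; also $r\mu_m\neq\mu_m$ and $r\mu_m\notin\mu_mW$ (its $\ell_m$-length is strictly below the minimum over that coset), so $tr\mu_m$ lies in a coset distinct from $t\mu$ and $trt$ genuinely labels an arrow into $t\mu$ once its source is seen to be below $t\mu$. It remains to prove $tr\mu_m<_m t\mu_m$. For this I would transport along the length-preserving bijection $x\mapsto y_m^{-1}x$ of $\extW$: since $\ell_m(x)=\ell(y_m^{-1}x)$ and $y_m^{-1}tx=s'_{m+1}y_m^{-1}x$ (recall $t=t_{m+1}=y_ms'_{m+1}y_m^{-1}$), this carries $(\extW,\leq_m)$ with left multiplication by $t$ to $(\extW,\leq)$ with left multiplication by the \emph{simple} reflection $s:=s'_{m+1}$. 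In the untwisted picture the claim reads: if $a<b<sb$ with $s$ simple, then $sa<sb$ — an immediate instance of the lifting property of the Bruhat order applied to $a<sb$, using that $s$ is a left descent of $sb$. Transporting back yields $tr\mu_m<_m t\mu_m$, so $trt\in R(t\mu)$.

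The main obstacle is really bookkeeping around the parabolic quotient $X\cong\extW/W$: an arrow of $\Gamma_X^m$ is attached to a reflection $r$ only through the coset $r\mu_mW$ (one does not have $r\mu_m=\nu_m$ in general), so one must first establish $(t\mu)_m=t\mu_m$ and check that conjugation by $t$ is compatible with passage to cosets, before the one-line lifting-property argument on $\extW$ can be invoked.
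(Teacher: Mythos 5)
Your proof is correct and follows essentially the same route as the paper: the cardinality count via \Cref{Xorder}, and the key step that $r\mu_m<_m\mu_m$ implies $tr\mu_m<_m t\mu_m$ by transporting along $y_m^{-1}$ to the untwisted order and invoking the lifting property (the paper's ``Property Z''). Your additional bookkeeping — verifying $(t\mu)_m=t\mu_m$ and making the reflection-parametrization of arrows explicit — only spells out details the paper leaves implicit.
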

\begin{proof}
	
	By \Cref{Xorder} we know that the two sets have the same cardinality, so it suffices to show that conjugation by $t$ sends arrows directed to $\mu$ into arrows directed to $t\mu$.
	
	Let $r$ be a reflection such that
	$\ell_m(r \mu) <\ell_m(\mu)$. It is enough to show that $trtt\mu=tr\mu <_m t\mu$, or equivalently that $y_m^{-1}tr\mu <y_m^{-1}t\mu$.
	\begin{center}
		\begin{tikzpicture}
		\node (a) at (0.2,1.2) {$y_m^{-1}r\mu$};
		\node (b) at (2.2,1.2) {$y_m^{-1}\mu$};
		\node (c) at (-1,0) {$y_m^{-1}tr\mu=s'_{m+1}y_m^{-1}r\mu$};
		\node (d) at (3.2,0) {$s'_{m+1}y_m^{-1}\mu=y_m^{-1}t\mu$};
		\node at (1.25,1.2) {$<$};
		\node[rotate=-90] at (2.2,0.6) {$<$};
		\node at (1.25,0) {$<$};
		\end{tikzpicture}
	\end{center}
	where the top inequality and the inequality on the right are by hypothesis. The required inequality on the bottom follows from the Property Z of Coxeter groups \cite{DeoSome}. 
\end{proof}

We now want to restrict ourselves to the weights smaller than a given $\lambda\in X_+$.

\begin{definition}
	For every weight $\lambda\in X_+$ let $I(\lambda)=\{\mu \in X \mid \mu \leq \lambda\}$ denote the corresponding lower interval.
	We denote by $\Gamma^m_\lambda$ the restriction of $\Gamma^m_X$ to $I( \lambda)$. 
	
		For  $\mu\in X$ with $\mu\leq \lambda$ we denote by $\Arr_m(\mu,\lambda)$ the number of arrows directed to $\mu$ in $\Gamma_\lambda^m$. 
\end{definition}

Recall that $I(\lambda)$ can be characterized as the set of all weights which are in the same class of $\lambda \mod \bbZ \Phi$ and that are contained in the convex hull of $W\cdot \lambda$. 

In general $I(\lambda)$ is not a lower interval for the twisted order $<_m$.
Rather surprisingly, the analogue of \Cref{arrowsX} still holds for $\Gamma^m_\lambda$. However, on $\Gamma^m_\lambda$ conjugation by $t$ does not provide a bijection anymore, but we need to carefully modify it.

Recall that any two reflections in type $A_n$ generate a group which is either of type $A_1\times A_1$ or of type $A_2$.
We consider these two different possible cases in the next two Lemmas.

\begin{lemma}\label{a1a1}
	Let $t=t_{m+1}$ as before. 
	Let $\mu\in X$ be such that $\mu <t\mu \leq \lambda$. Let $r$ be a reflection commuting with $t$ and such that $r\mu<_m \mu$. Then $r\mu < \mu$ and $rt\mu < t\mu$.
\end{lemma}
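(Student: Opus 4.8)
The plan is to reduce both assertions to a single fact: the positive real root $\alpha^\vee_r$ attached to $r$ does not lie in $N(y_\infty)$. Once this is established, everything follows from the observation (already used for \Cref{arrowsX}) that passing from a twisted Bruhat graph $\Gamma^{m'}_X$ to the untwisted graph $\Gamma_X$ only reverses the edges whose label lies in $N(y_{m'})$, so an edge whose label avoids $N(y_{m'})$ keeps its orientation.

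First I would record that $\mu <_m t\mu$. Indeed $\alpha^\vee_t$ is the $(m+1)$‑st root in the list \eqref{reflectionorder}, so $\alpha^\vee_t\in N(y_{m+1})\setminus N(y_m)$, hence the edge between $\mu$ and $t\mu$ is not reversed on passing from $\Gamma_X$ to $\Gamma^m_X$ and $\mu<t\mu$ gives $\mu<_m t\mu$. Together with $r\mu<_m\mu$ this forces $r\ne t$, so $r$ and $t$ are two distinct commuting reflections; consequently the finite parts of $\alpha^\vee_r$ and $\alpha^\vee_t$ are orthogonal roots of $\Phi^\vee$ (this is exactly the $A_1\times A_1$ case recalled before the statement). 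Now $\alpha^\vee_t=k\delta-\alpha^\vee_{i_0,n}$ for some $k\ge 1$ and $1\le i_0\le n$, so in the standard coordinates $\alpha^\vee_{j,k}=e_j-e_{k+1}$ its finite part is $e_{n+1}-e_{i_0}$, which involves the index $n+1$. A root of $\Phi^\vee$ orthogonal to $e_{n+1}-e_{i_0}$ has the form $\pm(e_a-e_b)$ with $a,b\notin\{i_0,n+1\}$, so the finite part of $\alpha^\vee_r$ does not involve $n+1$ and, in particular, is not of the form $\pm\alpha^\vee_{i,n}$. Since every root occurring in \eqref{reflectionorder} has finite part $-\alpha^\vee_{i,n}$ for some $1\le i\le n$, this gives $\alpha^\vee_r\notin N(y_\infty)$, and a fortiori $\alpha^\vee_r\notin N(y_{m+1})\supseteq N(y_m)$.

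It then remains to harvest the two conclusions. The edge between $r\mu$ and $\mu$ is labelled by $\alpha^\vee_r\notin N(y_m)$, so it has the same orientation in $\Gamma_X$ as in $\Gamma^m_X$; thus $r\mu<_m\mu$ upgrades to $r\mu<\mu$. For the second statement I would first transport the relation to $\Gamma^{m+1}_X$: arguing as in the derivation of \eqref{tm} one has $\ell_{m+1}(\nu)=\ell_m(t\nu)$ for every $\nu\in X$, because $y_{m+1}^{-1}=s'_{m+1}y_m^{-1}$ and $t=y_ms'_{m+1}y_m^{-1}$ act the same way on the coset of $\nu$. Applying this with $\nu=\mu$ and $\nu=r\mu$ turns $r\mu<_m\mu$ into $rt\mu=tr\mu<_{m+1}t\mu$, and since the edge between $rt\mu$ and $t\mu$ is again labelled by $\alpha^\vee_r\notin N(y_{m+1})$ it keeps its orientation in $\Gamma_X$, so $rt\mu<t\mu$. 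As a by‑product $r\mu\le\mu\le\lambda$ and $rt\mu\le t\mu\le\lambda$, so conjugation by $t$ carries this arrow inside $I(\lambda)$ — which is the point of the lemma for the later application, and the only place where the hypothesis $t\mu\le\lambda$ enters.

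The one genuinely delicate point is the explicit description of $N(y_\infty)$ supplied by \eqref{reflectionorder}: the whole argument hinges on the fact that every root reversed by our family of wall‑crossings has finite part $-\alpha^\vee_{i,n}$, so no reflection commuting with $t_{m+1}$ can ever be among them. Granting that, the remainder is routine bookkeeping relating $\Gamma^m_X$, $\Gamma^{m+1}_X$ and $\Gamma_X$, and I do not anticipate any real obstacle there.
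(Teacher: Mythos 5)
Your proof is correct and follows essentially the same route as the paper's: the key step in both is that orthogonality of $\alpha_r^\vee$ with $\alpha_t^\vee$ rules out $\alpha_r^\vee$ lying in $N(y_\infty)$ (the paper phrases this as the absence of orthogonal pairs of roots in $N(y_{m+1})$), after which the twisted and untwisted orientations of the relevant edges agree. The only cosmetic difference is that you transport the second relation to $\Gamma^{m+1}_X$ via $\ell_{m+1}(x)=\ell_m(tx)$, whereas the paper stays in $\Gamma^m_X$ by invoking \Cref{arrowsX} to get $rt\mu<_m t\mu$ directly.
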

\begin{proof}
	Since the reflections $r$ and $t$ commute, the roots $\alpha_t^\vee$ and $\alpha_r^\vee$ are orthogonal.
	
	By \Cref{arrowsX} we know that $tr\mu=rt\mu <_m t\mu$ in $X$. 
	Recall that $\alpha^\vee_t\in N(y_{m+1})$. There are no orthogonal roots in $N(y_{m+1})$ because \[(k_1\delta-\alpha^\vee_{j_1,n},k_2\delta-\alpha^\vee_{j_2,n})=(\alpha^\vee_{j_1,n},\alpha^\vee_{j_2,n})=\begin{cases}
		2& \text{if }j_1=j_2\\
		1& \text{if }j_1\neq j_2.
	\end{cases}\]. Hence $\alpha^\vee_r\not \in N(y_m)$.
	It follows that $r\mu < \mu$ and $rt\mu < t\mu$.
\end{proof}

Let $r\in \affW$ be a reflection. We denote by $\beta_r\in \Phi_+$ the positive root such that $\alpha_r^\vee=m\delta-\beta_r^\vee$ for some $m\in \bbZ$.

\begin{lemma}\label{reflections}
		Let $\mu\in X$ be such that $\mu <t\mu \leq \lambda$. Let $r$ be a reflection not commuting with $t$ (i.e., the reflections $r$ and $t$ generate a group of type $A_2$ and we have $rtr=trt$) and such that $r\mu<_m \mu$.
	\begin{enumerate}
		\item Assume $r\mu \leq \lambda$ and 
		$tr\mu\not \leq \lambda$. Then  $rt\mu\leq \lambda$ and $trt\mu\not\leq \lambda$.
		\item Assume $r\mu\not \leq \lambda$ and 
		$tr\mu \leq \lambda$. 
		Then  $rt\mu\not\leq \lambda$ and $trt\mu\leq \lambda$.
	\end{enumerate}
Moreover, in both cases we have $trt\mu<_m \mu$ and $rt\mu<_m t\mu$.
\end{lemma}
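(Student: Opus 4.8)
The plan is to reduce everything to the rank-two parabolic subsystem $\Psi:=\Phi\cap\bbR\langle\beta_r,\beta_t\rangle$ (of type $A_2$) and to work inside the affine plane $\Pi$ spanned by the $\langle r,t\rangle$-orbit $O=\{\mu,r\mu,t\mu,rt\mu,tr\mu,trt\mu\}$ of $\mu$. One may assume $|O|=6$ (if $\mu$ lies on a wall of the finite dihedral group $\langle r,t\rangle$ the orbit collapses and the asserted relations are immediate). On $\Pi$ the three reflections $r,t,trt$ act through an affine $S_3$, so $O$ is a hexagon inscribed in a circle about the common fixed point, each of whose edges is parallel to a root of $\Psi$; whether $\mu$ and $t\mu$ are adjacent or opposite vertices of this hexagon depends on which of the three reflections $t$ is, and I treat the two configurations in parallel. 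On the other hand, since $\Psi$ is parabolic, the slice $P:=I(\lambda)\cap\Pi=\mathrm{conv}(W\lambda)\cap\Pi$ is a convex polygon all of whose edges are again parallel to roots of $\Psi$ --- this is the standard fact that the intersection of a $W$-weight polytope with a translate of a parabolic root subspace is a (generally non-symmetric) weight polytope for the subsystem; concretely, the facet normals of $\mathrm{conv}(W\lambda)$ restrict on $\Pi$ to the $W_\Psi$-orbit of the fundamental coweights of $\Psi$.

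I first prove the relations $trt\mu<_m\mu$ and $rt\mu<_m t\mu$. The edge of $\Gamma_X$ joining $\mu$ and $t\mu$ carries the root $\alpha_t^\vee\in N(y_{m+1})\setminus N(y_m)$, and since $\alpha_t^\vee\notin N(y_m)$ this edge is not reversed in $\Gamma_X^m$, so $\mu<t\mu$ gives $\mu<_m t\mu$. Applying \Cref{arrowsX} (conjugation by $t$, which rests on Property~Z) to the arrow $r\mu\to\mu$ produces the arrow $tr\mu\to t\mu$; conjugation by $t$ also carries the edge $\{rt\mu,t\mu\}$ to the edge $\{trt\mu,\mu\}$ with its orientation, so $rt\mu<_m t\mu\iff trt\mu<_m\mu$ and it suffices to prove one of them. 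For this I use that $N(y_m)\cap\Psi^\vee_+$ is biclosed and omits $\alpha_t^\vee$, hence --- $\Psi$ being of type $A_2$ --- equals one of $\varnothing,\{\alpha_r^\vee\},\{\alpha_{trt}^\vee\}$ (and $\alpha_t^\vee$ is the highest root of $\Psi$, being the $(m+1)$-st inversion of $y_\infty$). In each case the orientation in $\Gamma_X^m$ of every edge of the graph on $O$ is obtained from its orientation in $\Gamma_X$ by reversing exactly those edges whose root lies in $N(y_m)$, and the orientation in $\Gamma_X$ is read off from the side of the hyperplanes $H_{\alpha_r^\vee},H_{\alpha_t^\vee},H_{\alpha_{trt}^\vee}$ on which $\mu$ lies; combining this with the hypotheses $r\mu<_m\mu$ and $\mu<t\mu$ yields $rt\mu<_m t\mu$ by a direct check.

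It remains to prove the membership statements; I do Case (1) and note Case (2) is symmetric. Here $r\mu,\mu,t\mu$ and $tr\mu$ are consecutive vertices of $O$, the first three lying in $P$ and the last outside, and we must deduce $rt\mu\in P$ and $trt\mu\notin P$. The hexagon edge between $t\mu$ and $tr\mu$ is parallel to one root direction of $\Psi$, hence the edge of $\partial P$ separating $t\mu$ from $tr\mu$ is parallel to one of the other two; a short case analysis --- using convexity of $P$, that $P$ has edges only in the three root directions, and that $r\mu,\mu\in P$ --- rules out one of these two directions (it would force $r\mu\notin P$) and shows that in the remaining one $trt\mu$ lies on the far side of that same edge, whence $trt\mu\notin P$. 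Finally, to get $rt\mu\in P$ one combines $r\mu,\mu,t\mu\in P$, the location of the separating edge just found, and the positional information coming from the already-established relations $r\mu<_m\mu$ and $rt\mu<_m t\mu$, which pins down which side of $H_{\alpha_r^\vee}$ (resp. $H_{\alpha_{trt}^\vee}$) the point $rt\mu$ occupies; together these place $rt\mu$ inside $P$.

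The main obstacle is precisely this last, convex-geometric, step: because $I(\lambda)$ is \emph{not} a lower interval for the twisted order $<_m$, the membership assertions do not follow from the order-theoretic input alone, and one genuinely has to exploit that $I(\lambda)$ is a weight polytope --- so that $P$ and the orbit hexagon have edges in exactly the same three root directions --- and to feed in the positional constraints on $\mu$ coming from the hypotheses. Organizing the resulting case analysis (the two configurations of $\mu$ and $t\mu$ on the hexagon, the three possibilities for $N(y_m)\cap\Psi^\vee_+$, and Cases (1) and (2)) is where the work lies; the order-theoretic parts are a routine transcription of \Cref{arrowsX}, \Cref{a1a1}, and Property~Z.
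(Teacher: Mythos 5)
Your reduction to the rank-two subsystem and the slice $P=\Conv(W\lambda)\cap\Pi$ is in the right spirit (the statement that the facets cut $\Pi$ along root directions of $\Psi$ is exactly the type-$A$ claim at the heart of the paper's argument), but the logical architecture of your proof has a genuine gap: you try to establish the ``moreover'' relations $trt\mu<_m\mu$ and $rt\mu<_m t\mu$ \emph{first}, from the hypotheses $r\mu<_m\mu$, $\mu<t\mu$ and a case analysis of $N(y_m)\cap\Psi^\vee_+$ alone, i.e.\ without using the $t$-reversing hypotheses of cases (1)--(2). That is false. Take $n=2$, $m=0$ (so $<_m$ is the ordinary order), $t=t_1$ with $\alpha_t^\vee=\delta-\alpha_{1,2}^\vee$, $r=s_{\alpha_1}$ and $\mu=\varpi_1$. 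Then $r\mu=\mu-\alpha_1<\mu$ and $\mu<t\mu=\mu-2\alpha_{1,2}$, and $N(y_0)\cap\Psi_+^\vee=\emptyset$ is one of the cases you allow; yet $trt\mu=\mu-\alpha_2>\mu$ (since $\langle\mu,\alpha_2^\vee\rangle=0$) and likewise $rt\mu>t\mu$, so your ``direct check'' cannot yield $rt\mu<_m t\mu$ in this case. (This does not contradict the lemma: here the dominant representatives of $r\mu$ and $tr\mu$ are both $\leq$ that of $t\mu$, so $r$ is never $t$-reversing.) The paper's proof is structured the other way around: it uses the membership data to compare twisted and untwisted orientations on the edge between $t\mu$ and $tr\mu$ (namely $tr\mu\not\leq\lambda$, $t\mu\leq\lambda$ force $t\mu<tr\mu$, while \Cref{arrowsX} gives $tr\mu<_m t\mu$), concludes $\alpha^\vee_{rtr}\in N(y_m)$ and $\alpha^\vee_r\notin N(y_m)$, and only then derives the order relations and, from them, $rt\mu\leq\lambda$. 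Since your convex-geometric step for $rt\mu\in P$ explicitly feeds on the order relations you have not validly established, the proof as a whole does not go through.

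A secondary but related error: the parenthetical claim that $\alpha_t^\vee$ is the highest root of the rank-two affine subsystem ``being the $(m+1)$-st inversion of $y_\infty$'' is a non sequitur. Which of $\alpha_r^\vee,\alpha_t^\vee,\alpha_{rtr}^\vee$ is the sum of the other two depends on $r$; for instance $\alpha_t^\vee=\delta-\alpha_{1,n}^\vee$, $\alpha_r^\vee=\alpha_1^\vee$ gives $\alpha_{rtr}^\vee=\alpha_t^\vee+\alpha_r^\vee$. Consequently your enumeration of the possible restrictions $N(y_m)\cap\Psi_+^\vee$ (as $\emptyset,\{\alpha_r^\vee\},\{\alpha_{rtr}^\vee\}$) is incomplete, e.g.\ $\{\alpha_r^\vee,\alpha_{rtr}^\vee\}$ is biclosed when $\alpha_{rtr}^\vee$ is the long root. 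In the paper the needed information about which of these roots lies in $N(y_m)$ is not obtained from biclosedness but from the geometric hypotheses (plus the observation that all roots of $N(y_{m+1})$ pair to $-1$ with $\varpi_n$, which rules out $\alpha_r^\vee\in N(y_m)$). If you want to keep your hexagon-and-slice picture, you should import the membership hypotheses at the start, deduce the location of $\alpha_r^\vee,\alpha_{rtr}^\vee$ relative to $N(y_m)$ as the paper does, and only then run the convexity argument for $trt\mu\notin P$ and $rt\mu\in P$.
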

\begin{proof}

	Consider the polytope $P_\lambda :=\Conv(W\cdot \lambda)\cu X_\bbR=X\otimes_\bbZ \bbR$.
	Every $(n-1)$-dimensional facet of $P_\lambda$ is parallel to a hyperplane of the form $w(H_i)$ for some $w\in W$ and $1\leq i\leq n$, where
	\[H_i:=H_{\varpi^\vee_i}=\{\nu \in X_\bbR \mid \langle \nu,\varpi^\vee_i\rangle =0 \}. \]
	In other words, $H_i$ is equal to the span of all the simple roots $\alpha_j$, for $j\neq i$.

	Assume we are in the first case, that is $r\mu \leq \lambda$ and 
	$tr\mu\not \leq \lambda$.
	Hence, there exists a hyperplane $H$ supporting a $(n-1)$-dimensional facet of $P_\lambda$ which separates $tr\mu$ from $t\mu$ and $r\mu$. Assume that $H$ is parallel to $w(H_i)$ with $w\in W$ and $1\leq i\leq n$.
	
	We have \[tr\mu = t\mu + k \beta_{rtr},\] hence $\beta_{rtr} \not \in w(H_i)$. Similarly, the difference $tr\mu-r\mu$ is a multiple of $\beta_t$. Therefore, also $\beta_t\not \in w(H_i)$. 
	
	\begin{claim}
		We have $\beta_r\in w(H_i)$.
	\end{claim}
	\begin{proof}[Proof of the claim.]
		The reflections $r,t,rtr$ are the three reflections in the dihedral group $\langle r,t\rangle$, so one of $\alpha^\vee_r,\alpha^\vee_t$ and $\alpha^\vee_{rtr}$ is the sum of the other two. This means that for some choice of the signs we have $\alpha^\vee_r=\pm\alpha^\vee_t \pm \alpha^\vee_{rtr}$. Then, we also have \[\beta_r=\pm\beta_t\pm \beta_{rtr}\]
		(here we use that $\Phi$ is simply-laced so that $(\beta_1+\beta_2)^\vee= \beta_1^\vee+\beta_2^\vee$). After applying $w^{-1}$ we obtain
		\[
		w^{-1}( \beta_r)=\pm w^{-1}(\beta_t) \pm w^{-1}(\beta_{rtr}).
		\]
		
		Recall that $w^{-1}\beta_{rtr},w^{-1}\beta_t\not \in H_i$.
		Since we are in type $A$, this implies
		\[\langle w^{-1}(\beta_r),\varpi_i^\vee\rangle = \pm 1, 
		\qquad \langle w^{-1}(\beta_{rtr}),\varpi_i^\vee\rangle = \pm 1, \qquad 
		\langle w^{-1}(\beta_r),\varpi_i^\vee\rangle = \pm 1 \pm 1 \in \{-2,0,2\}.\]
		Since $w^{-1}(\beta_r)$ is a root, the only possibility is $\langle w^{-1}(\beta_r),\varpi_i^\vee\rangle=0$ and $\beta_r\in w(H_i)$. The claim is proved.\footnote{It is crucial that we are in type $A$ as the claim does not hold for other root systems, even of simply-laced type. If in $D_4$ we label by $1$ the simple root corresponding to the central vertex of the Dynkin diagram, the roots $\alpha_1+\alpha_2$, $\alpha_1+\alpha_3+\alpha_4$, $2\alpha_1 +\alpha_2 +\alpha_3 +\alpha_4$ are roots of a dihedral subgroup, but none of them lies in $H_1$.}
	\end{proof}
	
	We want to show that the graph $\Gamma_X^m$ looks like in \Cref{fig1}. The black arrows are given by hypothesis.
	
	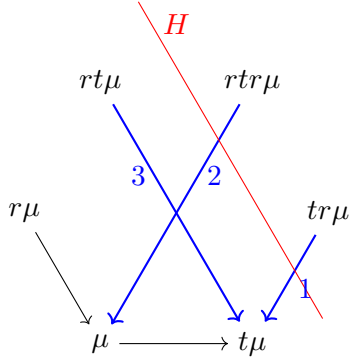
\begin{figure}[ht]
		\begin{center}
			\begin{tikzpicture}
			\node (x) at (240:2) {$\mu$};
			\node (tx) at (-60:2) {$t\mu$};
			\node (rtr) at (60:2) {$rtr\mu$};
			\node (rtx) at (120:2) {$rt\mu$};
			\node (rx) at (180:2) {$r\mu$};
			\node (trx) at (0:2) {$tr\mu$};
			\draw[->] (x) -- (tx);
			\draw[->] (rx) -- (x);
			
			\draw[->,blue, thick] (rtr) -- (x);	
			\draw[->,blue,thick] (trx) -- (tx);
			\draw[->,blue,thick] (rtx) -- (tx);
			\begin{scope}[xshift=-0.5cm]
			\draw[red] (90:2.8) -- (-30:2.8);
			\node[red] (H) at (0.5,2.5) {$H$};
			\end{scope}
			\node[blue] at (1.7,-1) {$1$};
			\node[blue] at (0.5,0.5) {$2$};
			\node[blue] at (-0.5,0.5) {$3$};
			\end{tikzpicture}
			\caption{A portion of the graph $\Gamma_X^m$ in the first case.}\label{fig1}
		\end{center}
	\end{figure}
	The weights $tr\mu$ and $rtr\mu$ differ only by a multiple of $\beta_r$ and, since $\beta_r\in w(H_i)$, they lie on the same side with respect to the hyperplane $H$. In particular, since $tr\mu \not \in P_\lambda$ then $rtr\mu\not \in P_\lambda$ as well.

	Since $tr\mu \not \leq\lambda$ and $t\mu\leq \lambda$, we have $tr\mu> t\mu$. On the other hand, we have $r\mu<_m \mu$ and by \Cref{arrowsX} we obtain $tr\mu <_m t\mu$ (arrow $1$ in \Cref{fig1}). This implies that we must have $\alpha^\vee_{rtr}\in N(y_m)$.

	Recall that $\{\alpha^\vee_t\}= N(y_{m+1})\setminus N(y_m)$.
	Therefore, both $\alpha^\vee_t$ and $\alpha^\vee_{rtr}$ lie in $N(y_{m+1})$.
	For all the roots $\alpha^\vee \in N(y_{m+1})$ we have $\langle \varpi_n,\alpha^\vee\rangle = -1$. Since $\alpha^\vee_r=\pm\alpha^\vee_t \pm \alpha^\vee_{rtr}$, we deduce $\alpha^\vee_r\not \in N(y_{m+1})$ and, in particular, $\alpha_r^\vee \not \in N(y_m)$.

	Since $trt\mu \not \leq \lambda$ and $\mu\leq \lambda$ we have $\mu <trt\mu$. Because $\alpha_{rtr}\in N(y_m)$ this is equivalent to $trt\mu <_m \mu$ (arrow 2). 
	
	Moreover, by \Cref{arrowsX}, $trt\mu <_m \mu$ implies $rt\mu <_m t\mu$ (arrow 3). Finally, since $\alpha_r \not \in N(y_m)$, this is in turn equivalent to $rt\mu < t\mu$, hence $rt\mu < \lambda$. The proof of the first part is complete.



	The second part of the second case is similar, but we report it anyway for completeness. In this case, we claim that the graph $\Gamma_X^m$ looks as in \Cref{fig2}.
	\begin{figure}[h]
		\begin{center}
			\begin{tikzpicture}
			\node (x) at (240:2) {$\mu$};
			\node (tx) at (-60:2) {$t\mu$};
			\node (rtr) at (60:2) {$rtr\mu$};
			\node (rtx) at (120:2) {$rt\mu$};
			\node (rx) at (180:2) {$r\mu$};
			\node (trx) at (0:2) {$tr\mu$};
			\draw[->] (x) -- (tx);
			\draw[->] (rx) -- (x);
			\draw[->,blue,thick] (rtx) -- (tx);
			\draw[->,blue,thick] (rtr) -- (x);	
			\draw[->,blue, thick] (rx) -- (trx);
			\begin{scope}[xscale=-1,xshift=-0.5cm]
			\draw[red] (90:2.8) -- (-30:2.8);
			\node[red] (H) at (0.5,2.5) {$H$};
			\end{scope}
			\end{tikzpicture}
		\end{center}
		\caption{A portion of the graph $\Gamma_X^m$ in the second case.}\label{fig2}	
	\end{figure}
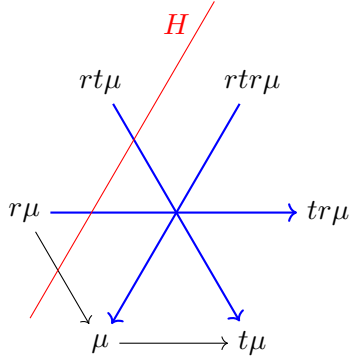

	Assume that $r \mu \not \in P_\lambda$ and that $tr\mu \in P_\lambda$. So there is a hyperplane $H$ supporting a $(n-1)$-dimensional facet of $P_\lambda$ dividing $r\mu$ from $\mu$ and $tr\mu$. Assume that $H$ is parallel to the hyperplane $H=w(H_i)$, with $w\in W$ and $1\leq 1\leq n$. 
	
	As in the first part, we have $\beta_r,\beta_t\not \in w(H_i)$, so $\beta_{rtr}$ must lie in $w(H_i)$. Moreover, we have $\alpha^\vee_r \in N(y_m)$, hence $\alpha^\vee_{rtr}\not \in N(y_m)$. The element $rt\mu$ is on the same side of $r\mu$ with respect to $H$, so also $rt\mu \not \in P_\lambda$.
	We deduce that $t\mu < rt\mu$, hence $rt\mu <_m t\mu$ which is equivalent to $rtr\mu <_m \mu$ by \Cref{arrowsX}. 
	
	Finally, since $\alpha_{rtr}^\vee\not \in N(y_m)$, we also deduce that $rtr\mu<\mu<\lambda$ as required.
\end{proof}

Motivated by \Cref{reflections}, we give the following definition.

\begin{definition}
	Let $\mu\in X$ and $\lambda\in X_+$ be such that $\mu<t\mu\leq \lambda$. Let $r$ be a reflection such that $r\mu <_m \mu$. 
	We say that $r$ is \emph{$t$-reversing} for $\mu$ if one of the following conditions holds:
	\begin{itemize}
		\item $r\mu \leq \lambda$ and $tr\mu \not\leq \lambda$,
		\item $r\mu \not \leq \lambda$ and $tr\mu \leq \lambda$.
	\end{itemize}
\end{definition}
Notice that  $r$ can be $t$-reversing for $\mu$ only if it does not commute with $t$. In fact, if $r$ and $t$ commute by \Cref{a1a1} we have $r\mu <\mu\leq \lambda$ and $rt\mu=tr\mu <t\mu\leq \lambda$. From \Cref{reflections} we see that $r$ is $t$-reversing for $\mu$ if and only if $trt$ is also $t$-reversing for $\mu$.

\begin{prop}\label{Gammam}
	Let $\mu\in X$ and $\lambda\in X_+$ be such that $\mu< t\mu\leq \lambda$. Then
	\[	\left\lvert\left\{\begin{array}{c}
	\text{ arrows directed to }\mu\\\text{ in }\Gamma_\lambda^m
	\end{array}\right\}\right\rvert=\left\lvert\left\{\begin{array}{c}
	\text{ arrows directed to }t\mu\\ \text{ in }\Gamma_\lambda^m
	\end{array}\right\}\right\rvert -1.\]
In particular, $\Arr_m(\mu,\lambda)=\Arr_\mu(t\mu,\lambda)-1$.
\end{prop}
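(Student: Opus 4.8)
The plan is to promote the bijection of \Cref{arrowsX} for the full graph $\Gamma^m_X$ to one for the truncated graph $\Gamma^m_\lambda$, repairing it along the boundary of the interval $I(\lambda)$ by means of \Cref{a1a1} and \Cref{reflections}. As a preliminary I would record (arguing as in the proof of \Cref{Xorder}, via \cite[Theorem 1.4]{BDSWNote}) that the arrows of $\Gamma^m_X$ directed to a weight $\nu$ are in bijection with the reflections $r\in\extW$ such that $r\nu\neq\nu$ and $r\nu<_m\nu$, the corresponding arrow being $r\nu\to\nu$; in particular, among the arrows directed to $t\mu$, the arrow $\mu\to t\mu$ is the one attached to the reflection $t$ itself. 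Since $\Gamma^m_\lambda$ keeps exactly the arrows both of whose endpoints lie in $I(\lambda)$, and since $\mu\le\lambda$ and $t\mu\le\lambda$, the arrows of $\Gamma^m_\lambda$ directed to $\mu$ (resp.\ to $t\mu$) are exactly those $r\mu\to\mu$ with $r\mu<_m\mu$ and $r\mu\le\lambda$ (resp.\ those $r(t\mu)\to t\mu$ with $r(t\mu)<_m t\mu$ and $r(t\mu)\le\lambda$).

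Next I would invoke \Cref{arrowsX} (recall $\mu<_m t\mu$): conjugation by $t$ sends an arrow $r\mu\to\mu$ of $\Gamma^m_X$ to the arrow $tr\mu\to t\mu$ of $\Gamma^m_X$ (attached to the reflection $trt$), and gives a bijection from the arrows directed to $\mu$ onto the arrows directed to $t\mu$ other than $\mu\to t\mu$. Call an arrow $r\mu\to\mu$ of $\Gamma^m_\lambda$ \emph{stable} if its image $tr\mu\to t\mu$ again lies in $\Gamma^m_\lambda$, i.e.\ if $tr\mu\le\lambda$; dually call an arrow $\nu'\to t\mu$ of $\Gamma^m_\lambda$ with $\nu'\neq\mu$ stable if $t\nu'\le\lambda$. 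Because the pair of conditions ``$r\mu\le\lambda$ and $tr\mu\le\lambda$'' is symmetric under exchanging $r\mu$ with $tr\mu$, conjugation by $t$ restricts to a bijection between the stable arrows directed to $\mu$ and the stable arrows directed to $t\mu$. Hence $\Arr_m(\mu,\lambda)$ and $\Arr_m(t\mu,\lambda)$ differ by $1$ plus the difference between the numbers of \emph{non}-stable arrows directed to $\mu$ and to $t\mu$, and it suffices to produce a bijection between these two sets of non-stable arrows.

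The last step is to recognise non-stable arrows as $t$-reversing reflections. A non-stable arrow directed to $\mu$ is an $r\mu\to\mu$ with $r\mu<_m\mu$, $r\mu\le\lambda$ and $tr\mu\not\le\lambda$, i.e.\ $r$ is $t$-reversing for $\mu$ via the first bullet of the definition. For a non-stable arrow directed to $t\mu$, written $\nu'\to t\mu$ with attached reflection $r'$, set $r:=tr't$; unwinding the conditions ($\nu'\le\lambda$, $t\nu'\not\le\lambda$, $\nu'<_m t\mu$, $\nu'\neq\mu$) through \Cref{arrowsX} one gets $r\mu=t\nu'<_m\mu$, $r\mu\not\le\lambda$ and $tr\mu=\nu'\le\lambda$, so $r$ is $t$-reversing for $\mu$ via the second bullet, and this correspondence non-stable-arrow $\leftrightarrow$ second-bullet-reflection is a bijection. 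By \Cref{a1a1}, no $t$-reversing reflection can commute with $t$: if $r$ commuted with $t$ and $r\mu<_m\mu$, then $r\mu<\mu\le\lambda$ and $tr\mu=rt\mu<t\mu\le\lambda$, so $r$ would be stable. Hence \Cref{reflections} applies to all of them, and its two conclusions together with the ``Moreover'' clause say precisely that the involution $r\mapsto trt$ carries the first-bullet $t$-reversing reflections to the second-bullet ones and back. Composing the three bijections yields the desired bijection between the non-stable arrows directed to $\mu$ and those directed to $t\mu$, and the proposition follows, including the formula $\Arr_m(\mu,\lambda)=\Arr_m(t\mu,\lambda)-1$.

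The substantive content is entirely in \Cref{reflections} (proved by the convexity argument about the polytope $P_\lambda$) and in \Cref{arrowsX}; what is left here is essentially bookkeeping. The only delicate point is keeping careful track of which arrows conjugation by $t$ pushes outside $I(\lambda)$: one has to notice that ``stability'' is a symmetric condition, so that the easy part of the correspondence needs no modification, while the non-stable part is exactly the situation that \Cref{reflections} was designed to control.
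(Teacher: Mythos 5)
Your proof is correct and follows essentially the same route as the paper's: the bijection you assemble (conjugation by $t$ on the stable arrows, $r\mapsto trt$ on the non-stable ones, the latter identified with the two kinds of $t$-reversing reflections) is exactly the map $\phi$ in the paper's proof, with your stable/non-stable dichotomy replacing the paper's single case-split definition of $\phi$ followed by its injectivity and restriction arguments. The substantive input is the same in both: \Cref{arrowsX}, \Cref{a1a1} and \Cref{reflections}.
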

\begin{remark}
	This result is specific about type $A$ and its analogue in other types seems to fail in most cases. See for example $\lambda = 2\varpi_1 +2\varpi_2 $ in type $B_2$. 
\end{remark}
\begin{proof}
	To prove the claim, we want to define a bijection between arrows pointing to $\mu$ and arrows pointing to $t\mu$.
	This is done modifying the bijection given by conjugation by $t$ (cf. \Cref{arrowsX}) by taking into account the notion of $t$-reversing reflections.
	
	We begin by defining a map $\phi$ as follows.
	\[ \phi: \{r\mu\in X \mid r\mu <_m \mu\} \ra \{r't\mu\in X \mid r't\mu <_m t\mu\} \setminus \{ \mu\}\]
	\[\phi(r\mu) = \begin{cases}
	tr\mu & \text{if }r\text{ is not $t$-reversing for }\mu\\
	rt\mu & \text{if }r\text{ is $t$-reversing for } \mu\\
	\end{cases}\]
	
	Notice that if $r\mu <_m \mu$, then $tr\mu <_m t\mu$ by
	\Cref{arrowsX}. If $r$ is $t$-reversing we are in the setting of \Cref{reflections} and $rt\mu <_m t\mu$.
	So the map $\phi$ is well defined.
	
	By \Cref{arrowsX}, we already know that $\phi$ is a map between sets with the same cardinality.
	To prove that $\phi$ is bijective, it is sufficient to show that $\phi$ is injective.
	
	Suppose that $\phi(r'\mu)=\phi(r''\mu)$ with $r'\mu\neq r''\mu$. By \cite[Lemma 1.3]{ChargeGen} then we have $r'\neq r''$. Without loss of generality we can assume $\phi(r'\mu)=r't\mu$ and $\phi(r''\mu)=tr''\mu$ (i.e., we assume that $r'$ is $t$-reversing while $r''$ is not) so that $r't\mu = tr''\mu$. This implies $r''=tr't$, again by \cite[Lemma 1.3]{ChargeGen}. 
	On the other hand, by \Cref{reflections} if $r'$ is $t$-reversing, then so is $r''=tr't$. This leads to a contradiction, hence $\phi$ is injective.

	We now want to show that $\phi$ restricts to a bijection between arrows in $\Gamma^m_\lambda$. That is, we claim that $\phi$ restricts to a bijection as follows.
	\[ \phi: \{r\mu\in X \mid r\mu <_m \mu \text{ and }r\mu \leq \lambda\} \xra{\sim} \{r't\mu\in X \mid r't\mu <_m t\mu\text{ and }r't\mu\leq \lambda\} \setminus \{ \mu\}\]
	
	If $r\mu \leq \lambda$ then either $r$ is $t$-reversing, in which case $\phi(r\mu)= rt\mu \leq \lambda$ by \Cref{reflections}, or
	$r$ is not $t$-reversing, in which case $\phi(r\mu)=tr\mu \leq \lambda$. Similarly, 
	if $r\mu \not \leq \lambda$ then either $r$ is $t$-reversing, in which case $\phi(r\mu)= rt\mu \not \leq \lambda$, or
	$r$ is not $t$-reversing, in which case $\phi(r\mu)=tr\mu \not\leq \lambda$. Summing up, we have shown that $r\mu \leq \lambda$ if and only if $\phi(r\mu) \leq \lambda$.	
\end{proof}

\subsection{The twisted Bruhat graph at \texorpdfstring{$\infty$}{infinity}}

The Bruhat graph $\Gamma^{m+1}_\lambda$ can be obtained from $\Gamma^m_{\lambda}$ by reversing the direction of all the edges with label $\alpha_t^\vee$, where $t=t_{m+1}=y_{m+1}y_m^{-1}$. Notice that $\mu<t\mu \iff\mu <_m t\mu \iff \mu >_{m+1}t\mu$.
Therefore, we have
	\begin{equation}\label{Arrm+1}
		\Arr_{m+1}(\mu,\lambda)=\begin{cases}
			\Arr_{m}(\mu,\lambda)-1& \text{if }t\mu<\mu,\\
			\Arr_{m}(\mu,\lambda)+1& \text{if }\mu<t\mu\leq \lambda',\\
			\Arr_{m}(\mu,\lambda)& \text{if }\mu<t\mu\not \leq \lambda'\text{ or if }t\mu=\mu.
		\end{cases}
	\end{equation}

Recall that the Bruhat graph $\Gamma_\lambda^\infty$ can be obtained from $\Gamma_\lambda$ by inverting the orientation of all the edges with label in $N(y_\infty)$. Since $\Gamma_\lambda$ is finite, there exists $M\gg 0$ such that $\Gamma_\lambda^M=\Gamma_\lambda^\infty$.

Let $\mu,\nu <\lambda$ and suppose that $\nu <_{\infty}\mu$, so there is an edge connecting $\nu$ and $\mu$ in $\Gamma_{\lambda}^\infty$. This means that there exists $\alpha^\vee\in \affPhi^\vee_+$ with $s_{\alpha^\vee}(\mu)=\nu$. By \cite[Lemma 2.7]{ChargeGen} we have in this case that $\mu-\nu=k\beta$ with $\beta\in \Phi_+$ and $k\in \bbZ$, and $\alpha^\vee=(k-\langle\mu,\beta^\vee\rangle)\delta-\beta^\vee$. 

Let $\Phi_{n-1}\subset \Phi$ be the root subsystem with simple roots $\alpha_1,\ldots,\alpha_{n-1}$. Notice that if $\beta\in \Phi_{n-1}$, then $m\delta -\beta^\vee \not \in N(y_\infty)$ for any $m\in \bbZ$.
Thus the orientation of the edge  $\nu\to\mu$ in $\Gamma_\lambda^\infty$ is different from the orientation of the same edge in $\Gamma_\lambda$ if and only if $\beta\in \Phi_+\setminus \Phi_{n-1}$ and $k>\langle \mu,\beta^\vee\rangle$.

Recall from \cite[Eq. (9)]{ChargeGen} that
\[ \nu <\mu 
\iff \langle \mu, \beta^\vee\rangle \geq k >0\text{ or }\langle \mu,\beta^\vee\rangle <k<0.\]
For $\beta\in \Phi_+\setminus \Phi_{n-1}$, we have then that 
\[\nu <_{\infty} \mu\iff k>0.\]
Hence, as illustrated in \Cref{figbruhat}, if $\beta\in \Phi_+\setminus \Phi_{n-1}$, the restriction of $<_{\infty}$  to the subset $\{\mu+k\beta\}_{k\in \bbZ}$ coincides the dominance order on weights.

\begin{figure}
\begin{tikzpicture}
	\tikzstyle{every node}=[draw,circle,fill=black,minimum size=5pt, inner sep=0pt]
	\draw (-6,0) node (-6) {};
	\draw (-4,0) node (-4) {};
	\draw (-2,0) node (-2) {};
	\draw (0,0) node (0) {};
	\draw (2,0) node (2) {};
	\draw (4,0) node (4) {};

			\draw (6,0) node (6) {};
	\tikzstyle{every node}=[]

\path (-6) ++(0,0.4) node {$\mu-3\beta$};
\path (-4) ++(0,0.4) node {$\mu-2\beta$};
\path (-2) ++(0,0.4) node {$\mu-\beta$};
\path (0) ++(0,0.4) node {$\mu$};
\path (2) ++(0,0.4) node {$\mu+\beta$};
\path (4) ++(0,0.4) node {$\mu+2\beta$};
\path (6) ++(0,0.4) node {$\mu+3\beta$};
	\path[->,red,very thick] (2) edge[out=160,in=30] node[above] {\scriptsize$\delta-\alpha^\vee$} (-4);
	\path[->,red,very thick] (0) edge[out=-150, in=-40] node[below] {\scriptsize$2\delta-\alpha^\vee$} (-4);
	\path[->,red,very thick] (-2) edge node[below] {\scriptsize$3\delta-\alpha^\vee$} (-4);
	\path[->,blue,dashed,very thick] (-6) edge node[below] {\scriptsize$5\delta-\alpha^\vee$} (-4);

\end{tikzpicture}\\
\begin{tikzpicture}
	\tikzstyle{every node}=[draw,circle,fill=black,minimum size=5pt, inner sep=0pt]
	\draw (-6,0) node (-6) {};
	\draw (-4,0) node (-4) {};
	\draw (-2,0) node (-2) {};
	\draw (0,0) node (0) {};
	\draw (2,0) node (2) {};
	\draw (4,0) node (4) {};
		\draw (6,0) node (6) {};
	\tikzstyle{every node}=[]
	\path (-6) ++(0,0.4) node {$\mu-3\beta$};
	\path (-4) ++(0,0.4) node {$\mu-2\beta$};
	\path (-2) ++(0,0.4) node {$\mu-\beta$};
	\path (0) ++(0,0.4) node {$\mu$};
	\path (2) ++(0,0.4) node {$\mu+\beta$};
	\path (4) ++(0,0.4) node {$\mu+2\beta$};
	\path (6) ++(0,0.4) node {$\mu+3\beta$};
	\path[->,red,very thick] (-2) edge[out=-30,in=-150] node[above] {\scriptsize$\alpha^\vee$} (2);
	\path[->,blue,dashed,very thick] (-2) edge[out=-30,in=-150]  (2);
	\path[->,red,very thick] (0) edge node[above] {\scriptsize$\delta+\alpha^\vee$} (2);
	\path[->,blue,dashed,very thick] (0) edge  (2);
	\path[->,blue,dashed,very thick] (-4) edge [out=30,in=150] node[above] {\scriptsize$\delta-\alpha^\vee$} (2);
	\path[->,blue,dashed,very thick] (-6) edge [out=-20,in =-160] node[above] {\scriptsize$5\delta-\alpha^\vee$} (2);
\end{tikzpicture}
\caption{In this example we have $\langle \mu,\beta^\vee\rangle=0$ and $\lambda=\mu+3\beta$. In red, the arrows pointing to $\mu-2\beta$ (above) and $\mu+\beta$ (below) in $\Gamma_\lambda$, in blue, some arrows pointing to $\mu-2\beta$ and $\mu+\beta$ in $\Gamma^\infty_\lambda$.} \label{figbruhat}
\end{figure}
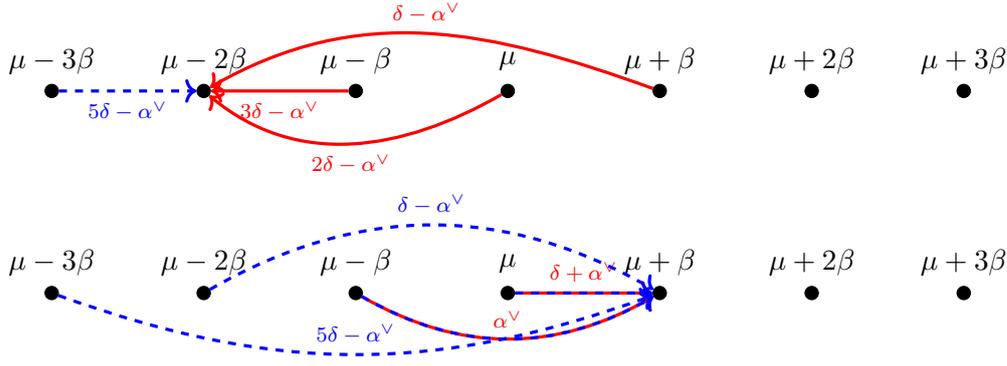

\begin{definition}
	For $\beta\in \Phi_+$ and $m\in \bbN \cup \{\infty\}$ we denote by $\Arr_m^\beta(\mu,\lambda)$ the number of arrows of the form $\mu+k\beta\to \mu$ in $\Gamma_\lambda^m$, with $k\in \bbZ$. 
	
	We also denote by $\ell_m^\beta(\mu)$ the number of arrows of the form $\mu+k\beta\to \mu$ in $\Gamma^m_X$.
	We write $\ell^\beta(\mu)$ instead of $\ell_0^\beta(\mu)$.
\end{definition}

By \cite[Lemma 2.7]{ChargeGen} we have $\sum_{\beta\in \Phi_+}\Arr^m(\mu,\lambda)$. Notice that $\Arr_0^\beta(\mu,\lambda)=\ell_0^\beta(\mu)=:\ell^\beta(\mu)$ for any $\mu \leq \lambda$. So we have $\sum_{\beta\in \Phi_+} \Arr_0^\beta(\mu,\lambda)=\sum_{\beta \in \Phi_+}\ell^\beta(\mu)=\ell(\mu)$. 

\begin{lemma}\label{inftyarrows}
	Let $\lambda\in X_+$ and $\mu \leq \lambda$. We have
	\[ \Arr_{\infty}(\mu,\lambda)=\sum_{\beta\in \Phi_+\setminus \Phi_{n-1}} \max\{k\in\bbN  \mid \mu-k\beta \leq \lambda\}+\sum_{\beta\in (\Phi_{n-1})_+}\ell^\beta(\mu)\]
\end{lemma}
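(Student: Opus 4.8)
The strategy is to decompose the count $\Arr_\infty(\mu,\lambda)$ of arrows pointing to $\mu$ in $\Gamma_\lambda^\infty$ according to the root direction $\beta\in\Phi_+$, writing $\Arr_\infty(\mu,\lambda)=\sum_{\beta\in\Phi_+}\Arr_\infty^\beta(\mu,\lambda)$, and then evaluating each summand separately depending on whether $\beta$ lies in $\Phi_{n-1}$ or not. This reduces a global statement to a one-dimensional problem along each line $\{\mu+k\beta\}_{k\in\bbZ}$, which is exactly the picture recorded in \Cref{figbruhat}. First I would recall from \cite[Lemma 2.7]{ChargeGen} that every arrow into $\mu$ in $\Gamma_X$ (hence in any twist) is of the form $\mu+k\beta\to\mu$ for a unique $\beta\in\Phi_+$ and a unique $k\in\bbZ\setminus\{0\}$, so the decomposition is well defined and the two sums are disjoint.

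For $\beta\in(\Phi_{n-1})_+$: as observed in the text just before \Cref{figbruhat}, if $\beta\in\Phi_{n-1}$ then $m\delta-\beta^\vee\notin N(y_\infty)$ for all $m\in\bbZ$, so none of the edges in the $\beta$-direction get reversed when passing from $\Gamma_\lambda$ to $\Gamma_\lambda^\infty$. Hence $\Arr_\infty^\beta(\mu,\lambda)=\Arr_0^\beta(\mu,\lambda)$, and one must check this equals $\ell^\beta(\mu)$, i.e.\ that restricting to $I(\lambda)$ removes no arrows in this direction. This holds because an arrow $\mu+k\beta\to\mu$ with $0<k\leq\langle\mu+k\beta,\beta^\vee\rangle$ (the condition for $\mu<\mu+k\beta$ in $\Gamma_X$) automatically has its source in $I(\lambda)$ once $\mu\leq\lambda$: the source lies on the segment between $\mu$ and a $W$-translate forced inside $P_\lambda$ by convexity, and $\lambda\in X_+$ guarantees the whole $\beta$-string segment between a dominant-chamber endpoint and $\mu$ stays in $P_\lambda$. (This is the only place where I expect to spend real care; see below.)

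For $\beta\in\Phi_+\setminus\Phi_{n-1}$: here the discussion preceding \Cref{figbruhat} shows that $\nu<_\infty\mu$ for $\nu=\mu+k\beta$ is equivalent to $k<0$, i.e.\ the restriction of $<_\infty$ to the line $\{\mu+k\beta\}$ is the dominance order on that line. Therefore the arrows pointing to $\mu$ in direction $\beta$ in $\Gamma_\lambda^\infty$ are exactly those $\mu-k\beta\to\mu$ with $k\geq 1$ and $\mu-k\beta\leq\lambda$, and since $\{\nu\leq\lambda\}$ along a line is an interval (again convexity of $P_\lambda$ plus the congruence condition mod $\bbZ\Phi$), their number is $\max\{k\in\bbN\mid\mu-k\beta\leq\lambda\}$. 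Summing the two cases gives the claimed formula. The main obstacle is the case $\beta\in\Phi_{n-1}$: one must confirm that passing from $\Gamma_X$ to $\Gamma_\lambda$ (the restriction to $I(\lambda)$) does not delete any $\beta$-arrow into $\mu$, which amounts to the geometric fact that the relevant $\beta$-string segment lies entirely in $P_\lambda=\Conv(W\cdot\lambda)$; this uses dominance of $\lambda$ essentially and is false in general for non-dominant endpoints.
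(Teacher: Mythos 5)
Your proposal is correct and follows essentially the same route as the paper: decompose $\Arr_\infty(\mu,\lambda)$ by root direction $\beta$ and use the discussion preceding \Cref{figbruhat} to determine, line by line, which edges are reversed by $N(y_\infty)$. The one point you flag as delicate --- that restricting to $I(\lambda)$ deletes no $\beta$-arrows into $\mu$ for $\beta\in(\Phi_{n-1})_+$ --- is in fact immediate: the source $\nu$ of any arrow $\nu\to\mu$ in $\Gamma_X$ satisfies $\nu<\mu\leq\lambda$ by transitivity of the Bruhat order, so no convexity argument is needed there.
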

\begin{proof}
	We need to count the number of roots $\alpha^\vee \in \affPhi^\vee_+$ such that $s_{\alpha^\vee}(\mu)<_\infty \mu$ and $s_{\alpha^\vee}(
	\mu)\leq \lambda$. 
	This is equivalent to sum $\Arr_\infty^\beta(\mu,\beta)$ over all $\beta\in \Phi_+$, where
	\[ \Arr^\beta_\infty(\mu,\lambda):=\left\lvert\{k\in \bbZ \mid 
	\mu-k\beta<_\infty \mu\text{ and }\mu-k\beta\leq \lambda\}\right\rvert.\] 	

For $\beta\in \Phi_+\setminus \Phi_{n-1}$, by the discussion above, we have  $\mu-k\beta<_\infty \mu$ if and only if $k>0$. 	
If $\beta\in (\Phi_{n-1})_+$, then $\mu-k\beta<_\infty \mu$ if and only if $\mu-k\beta< \mu$. In this case, we have $\Arr_\infty^\beta(\mu,\lambda)=\ell^\beta(\mu)$.
\end{proof}
\section{Swapping functions and charge statistics}

For $T\in \calB(\lambda)$ we denote by $\wt(T)\in X$ the weight of $T$.
Let $\calB(\lambda)_\mu$ denote the set of elements of the crystal of weight $\mu$. Let $\calB^+(\lambda)=\{ T\in \calB(\lambda) \mid \wt(T)\in X_+\}$.
Recall the definition of charge from \cite{LSLe,LLTCrystal}.
\begin{definition}
	A \emph{charge statistic} on $\calB(\lambda)$ is a function $c:\calB^+(\lambda)\ra \bbZ_{\geq 0}$ such that for any $\mu\in X_+$ with $\mu\leq \lambda$ we have
	\begin{equation}\label{chargeeq}h_{\mu,\lambda}(q^{\frac12})=K_{\lambda,\mu}(q)=\sum_{T\in \calB(\lambda)_\mu} q^{c(T)}\in \bbZ[q].
	\end{equation}	
		
\end{definition}

\subsection{Affine Grassmannian and affine root system}

Let $G=SL_{n+1}(\bbC)$ and let $G^\vee=PGL_{n+1}(\bbC)$ be the Langlands dual group.
Let $\Gr^\vee=G^\vee((t))/G^\vee[[t]]$ denote the affine Grassmannian of $G^\vee$. There is an action of the \emph{augmented torus} $\affT=T^\vee\times \bbC^*$ on $\Gr^\vee$, where $T^\vee\cu G^\vee$ is a maximal torus of $G^\vee$ consisting of diagonal matrices and $\bbC^*$ acts by loop rotations.
The weight lattice $X$ can be identified with the character lattice of $T$ and with the cocharacter lattice of $T^\vee$. The fixed points of the $\affT$-action on $\Gr^\vee$ are $\{L_\mu\}_{\mu \in X}$, where $L_\mu$ is the projection of the point in $T^\vee[t,t^{-1}]\subset T^\vee((t))$ defined by the morphism $\mu:\bbC^*\to T^\vee$.

We denote by $\affX:=X\oplus \bbZ d$ the cocharacter lattice of $\affT$.
Every cocharacter $\eta\in \affX$ determines a one-parameter subgroup $\bbC^*\cu \affT$ acting on the affine Grassmannian. For any weight $\mu$, we denote by $\HL^\eta_\mu$ the corresponding hyperbolic localization functor (cf. \cite[\S 2.4]{ChargeGen}).

We denote by $\IC_\lambda$ the intersection cohomology sheaf with complex coefficients supported on the Schubert variety $\sch{\lambda}:=\bar{G^\vee[[t]]\cdot L_\lambda}\subset \Gr^\vee$.
For $\eta \in \affX$, we study the graded dimension of the hyperbolic localization of $\IC_\lambda$. 
We call 
\[\htil^\eta_{\mu,\lambda}(v):=\grdim \HL^\eta_\mu(\IC_\lambda)\]
the \emph{renormalized $\eta$-Kazhdan--Lusztig polynomial}.


 Each real root $\alpha^\vee\in \affPhi^\vee$ defines a hyperplane
\[ H_{\alpha^\vee}=\{\eta \in \affX \otimes \bbR \mid \langle \eta ,\alpha^\vee \rangle =0\}.\]

Let us now examine how $\HL^\eta$ depends on $\eta$.
We consider the complement of all the walls $H_{\alpha^\vee}$ in $\affX\otimes \bbR$, and call \emph{chamber} its connected components. Then the hyperbolic localization $\HL^\eta$ does not change for $\eta$ within to the same chamber \cite[Proposition 2.29]{ChargeGen}.

If we are interested in studying $\HL_\mu^\eta(\IC_\lambda)$ for a fixed $\lambda$, then we can restricts ourselves to consider the walls $H_{\alpha^\vee}$ for $\alpha^\vee \in \affPhi^\vee(\lambda)$, where \[ \affPhi^\vee(\lambda):=\{\affPhi^\vee\mid \exists \mu\leq \lambda \text{ with }s_{\alpha^\vee}(\mu)\leq \lambda\}\subset \affPhi^\vee.\] 

The set $\affPhi^\vee(\lambda)$ is finite for every $\lambda$, and we refer to the walls $H_{\alpha^\vee}$ for $\alpha^\vee\in \affPhi^\vee(\lambda)$ as \emph{$\lambda$-walls}. We refer to the connected components of the $\lambda$-walls in $\affX\otimes \bbR$ as \emph{$\lambda$-chambers}. We say that two $\lambda$-walls are \emph{adjacent} if they are separated by a single $\lambda$-wall.

When $\eta$ is in the KL region (cf. \cite[Definition 2.17]{ChargeGen}), i.e. if $\langle \eta,\alpha^\vee\rangle >0$ for all positive affine real roots, then $\eta$-Kazhdan--Lusztig polynomials $\htil^\eta_{\mu,\lambda}(v)$ coincide with the ordinary Kazhdan--Lusztig polynomials.

We extend the definition of charge to renormalized $\eta$-Kazhdan--Lusztig polynomials. 

\begin{definition}
	Let $\eta\in \affX$ be a regular cocharacter.
	A \emph{renormalized charge statistic} (or \emph{recharge} for short) for $\eta$ on $\calB(\lambda)$ is a function $r(\eta,-):\calB(\lambda) \ra \frac12\bbZ$
	such that 
	\[\htil^\eta_{\mu,\lambda}(q^{\frac12})=\sum_{T\in \calB(\lambda)_\mu} q^{r(\eta,T)}\in \bbZ[q^{\frac12},q^{-\frac12}].\]
\end{definition}

Therefore, if $\eta$ is in the KL region and $r(\eta,-)$ is a recharge for $\eta$, then we can recover a charge statistic $c$ as in \eqref{chargeeq} by setting \begin{equation}\label{charge}
	c(T):=r(\eta,T)+\frac12 \ell(\wt(T)).
\end{equation}

Another important set of cocharacters are those in the MV region \cite[Definition 2.18]{ChargeGen}, the subset of dominant cocharacters in $X\subset \affX$. If $\eta_{MV}$ lies in the MV region, then $\HL^{\eta_{MV}}_\mu(\IC_\lambda)$ is concentrated in a single cohomological degree, so there is only one recharge statistic, given by
\begin{equation*}\label{rMV}
	r(\eta_{MV},T)=-\langle \wt(T),\rho^\vee\rangle.
\end{equation*}

We recall the definition of swapping functions from \cite[Definition 2]{ChargeGen}
\begin{definition}
	Let $\alpha^\vee$ be a positive real root of  the affine root system of $G^\vee$, let $\eta$ be a cocharacter and $r(\eta,-)$ a recharge for $\eta$. Then a \emph{swapping function} $\psi=\{\psi_\nu\}_{\nu\in X, \nu>s_{\alpha^\vee}(\nu)}$ for $r(\eta,-)$ and $\alpha$ is
	a collection of injective functions $\psi_{\nu}:\calB(\lambda)_{\nu} \ra \calB(\lambda)_{s_{\alpha^\vee}(\nu)}$ such that
	\begin{equation}\label{psiintro}
		r(\eta,\psi_\nu(T))=r(\eta,T)-1 \qquad \text{ for every }T\in \calB(\lambda)_\nu. 
	\end{equation}	
\end{definition}

Let $\eta_1$ and $\eta_2$ be two cocharacters lying on opposite sides of a wall $H_{\alpha^\vee}$ with $\langle \eta_1,\alpha^\vee\rangle <0<\langle \eta_2,\alpha^\vee\rangle$. Suppose we have a recharge $r(\eta_1,-)$ for $\eta_1$. Then, there exists a swapping function $\psi$ for $r(\eta_1,-)$ (
\cite[Lemma 3.4]{ChargeGen}). We can use the swapping function to construct a recharge for $\eta_2$ on the other side of the wall $H_{\alpha^\vee}$ as follows.

\begin{thm}\label{mainthmgen}
	Let $r(\eta_2,-)$ be the function on $\calB(\lambda)$ obtained by swapping  the values of $r(\eta_1,-)$ as indicated by a swapping function $\psi$, i.e. we have
	\begin{equation}\label{reta2}
		r(\eta_2,T)=\begin{cases}r(\eta_1,T)-1& \text{if }s_{\alpha^\vee}(\wt(T))<\wt(T)\\
			r(\eta_1,T)+1& \text{if }\wt(T)<s_{\alpha^\vee}(\wt(T))\leq \lambda\text{ and }T\in \Ima(\psi)\\
			r(\eta_1,T)& \text{if }\wt(T)\leq s_{\alpha^\vee}(\wt(T))\text{ and }T\not\in \Ima(\psi).
		\end{cases}
	\end{equation}
	The $r(\eta_2,-)$ is a recharge for $\eta_2$.
\end{thm}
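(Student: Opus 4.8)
The plan is to reduce \Cref{mainthmgen} to a generating-function identity that is checked one weight at a time, the sole geometric input being the behaviour of hyperbolic localization across the single wall $H_{\alpha^\vee}$; we may assume $\eta_1$ and $\eta_2$ lie in adjacent $\lambda$-chambers. Fix a weight $\mu\leq\lambda$, set $v=q^{\frac12}$ and $\nu:=s_{\alpha^\vee}(\mu)$. The input I would use is the wall-crossing formula: crossing $H_{\alpha^\vee}$ from $\eta_1$ to $\eta_2$,
\[
\htil^{\eta_2}_{\mu,\lambda}(v)=
\begin{cases}
v^{-2}\,\htil^{\eta_1}_{\mu,\lambda}(v) & \text{if }\nu<\mu,\\
\htil^{\eta_1}_{\mu,\lambda}(v)+(1-v^{-2})\,\htil^{\eta_1}_{\nu,\lambda}(v) & \text{if }\mu<\nu\leq\lambda,\\
\htil^{\eta_1}_{\mu,\lambda}(v) & \text{if }\mu=\nu\text{ or }\mu<\nu\not\leq\lambda.
\end{cases}
\]
This is the effect of a single wall crossing on the graded dimensions $\htil$: it follows from Braden's hyperbolic localization theorem together with the local description of the attracting and repelling cells of $L_\mu$ and $L_\nu$ in $\Gr^\vee$, and it is precisely the relation that underlies the existence of a swapping function in \cite[Lemma 3.4]{ChargeGen}.

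Granting this, what remains is to check $\sum_{T\in\calB(\lambda)_\mu}q^{r(\eta_2,T)}=\htil^{\eta_2}_{\mu,\lambda}(v)$ for every $\mu\leq\lambda$, by matching the three branches above against the three cases of \eqref{reta2} and using only that each $\psi_\nu$ is injective with $r(\eta_1,\psi_\nu(T))=r(\eta_1,T)-1$. If $\mu=\nu$, or if $\mu<\nu\not\leq\lambda$, then $\calB(\lambda)_\mu\cap\Ima(\psi)=\emptyset$ (in the first case there is no $\psi_\mu$ in the collection; in the second $\calB(\lambda)_\nu=\emptyset$), so the third line of \eqref{reta2} gives $r(\eta_2,T)=r(\eta_1,T)$ on all of $\calB(\lambda)_\mu$, and the two sides agree. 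If $\nu<\mu$, the first line of \eqref{reta2} gives $r(\eta_2,T)=r(\eta_1,T)-1$ uniformly on $\calB(\lambda)_\mu$, so $\sum_{T\in\calB(\lambda)_\mu}q^{r(\eta_2,T)}=q^{-1}\htil^{\eta_1}_{\mu,\lambda}(v)=v^{-2}\htil^{\eta_1}_{\mu,\lambda}(v)$. Finally, if $\mu<\nu\leq\lambda$, split $\calB(\lambda)_\mu$ into $\Ima(\psi_\nu)$ and its complement; lines two and three of \eqref{reta2} give
\[
\sum_{T\in\calB(\lambda)_\mu}q^{r(\eta_2,T)}=\htil^{\eta_1}_{\mu,\lambda}(v)+(q-1)\sum_{T\in\Ima(\psi_\nu)}q^{r(\eta_1,T)}.
\]
Reindexing the last sum by $S\in\calB(\lambda)_\nu$ via $T=\psi_\nu(S)$ --- legitimate because $\psi_\nu$ is injective --- and then applying $r(\eta_1,\psi_\nu(S))=r(\eta_1,S)-1$, it becomes $q^{-1}\htil^{\eta_1}_{\nu,\lambda}(v)$, so the whole expression equals $\htil^{\eta_1}_{\mu,\lambda}(v)+(1-v^{-2})\htil^{\eta_1}_{\nu,\lambda}(v)$, matching the middle branch. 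This exhausts the cases, so $r(\eta_2,-)$ is a recharge for $\eta_2$.

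The only non-formal ingredient is the wall-crossing formula for $\htil$; everything else is pure bookkeeping. The main obstacle is therefore making that geometric input precise --- identifying the attracting and repelling cells of $L_\mu$ and $L_\nu$ on the two sides of $H_{\alpha^\vee}$ and computing the induced change in $\IC_\lambda$ via Braden's theorem. The subtlety to keep track of is which of $\mu,\nu$ is the larger weight: the asymmetry between the first two branches of the formula is mirrored in the asymmetry of \eqref{reta2}, whose correction term sits on the smaller of the two weight spaces while the swapping function $\psi_\nu$ is defined on the larger one, and it is exactly the injectivity and degree-shift properties of $\psi$ that make the two descriptions coincide.
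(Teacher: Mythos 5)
Your combinatorial bookkeeping is correct and complete: given your wall-crossing formula for $\htil^{\eta}_{\mu,\lambda}$, the three-way case analysis, the reindexing of $\sum_{T\in\Ima(\psi_\nu)}q^{r(\eta_1,T)}$ via the injectivity of $\psi_\nu$, and the degree shift $r(\eta_1,\psi_\nu(S))=r(\eta_1,S)-1$ do exactly what is needed, and the consistency check (the sum over a reflection pair $\{\mu,\nu\}$ is preserved) comes out right. Note also that this paper does not prove \Cref{mainthmgen} at all --- it is recalled from \cite{ChargeGen}, where the argument does run along the lines you describe --- so there is no in-paper proof to compare against beyond that citation.

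The genuine gap is that the wall-crossing formula you take as input \emph{is} the theorem: everything after it is formal, and everything before it is unproven. You justify it by saying it ``underlies the existence of a swapping function in \cite[Lemma 3.4]{ChargeGen},'' but this cannot be turned around: the defining property of a swapping function, $r(\eta_1,\psi_\nu(T))=r(\eta_1,T)-1$, refers only to the recharge on the $\eta_1$ side and says nothing whatsoever about $\htil^{\eta_2}_{\mu,\lambda}$. So the mere existence of $\psi$ (which is all that Lemma 3.4 asserts as a black box) gives you no purchase on the left-hand side of your three-case identity, and your proof reduces the theorem to a statement you have not established and cannot extract from the cited lemma. To close the gap you would need to actually carry out the geometric step you defer --- Braden's theorem applied to the two $\bbC^*$-actions $\eta_1,\eta_2$ in adjacent $\lambda$-chambers, identifying how the attracting/repelling cells of $L_\mu$ and $L_{s_{\alpha^\vee}(\mu)}$ change across $H_{\alpha^\vee}$ and deducing the relation between the two hyperbolic localizations (in \cite{ChargeGen} this is done by reducing to a rank-one $SL_2$-computation along the root $\alpha^\vee$). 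As written, the proposal is a correct reduction plus an assertion of the hard part, not a proof.
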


In the situation of \Cref{mainthmgen}, we say that $\psi$ is a swapping function between $\eta_1$ and $\eta_2$.

Assume we have a sequence of cocharacters $\eta_0,\ldots,\eta_M$ with $\eta_0$ in the MV region, $\eta_M$ in the KL region and such that $\eta_i$ and $\eta_{i+1}$ lie in adjacent $\lambda$-chambers for any $0\leq i<M$. Let $\gamma_i^\vee$ be the root of the wall dividing $\eta_i$ and $\eta_{i+1}$.
Starting with $r(\eta_0,-)$ we can inductively construct a recharge for any $\eta_{i+1}$ and a swapping function for $\eta_i$ and $\gamma_i^\vee$. At the end of the process we obtain a recharge in the KL chamber, hence a charge. 
We say that a recharge obtained through this process is a recharge \emph{obtained via swapping operations}.

Even in type $A$, we do not know how to concretely construct swapping functions $\psi$ in general.
In what follows, we will restrict ourselves to the case of a specific sequence of cocharacters where we can explicitly construct the swapping functions in terms of the modified crystal operators.

\subsection{Recharge in the KL Region}\label{KLrecharge}

We now state the main result of this paper. We fix $\lambda\in X_+$ and we consider the crystal $\calB(\lambda)$.

\begin{thm}\label{main}
	Let $\eta$ be in the KL chamber. Then
	\begin{equation}\label{rfinal}
		r(\eta,T)=\shift(T)-\ell(\wt(T))
	\end{equation}
	is a recharge for $\eta$. Moreover, the recharge above can be obtained via swapping operations.
\end{thm}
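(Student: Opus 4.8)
The plan is to produce the function \eqref{rfinal} as the output of the wall-crossing procedure of \Cref{mainthmgen}, run along the family of cocharacters of \Cref{inductive} and using at each wall the explicit swapping function attached to \eqref{sfintro}. First I would realize the path of \Cref{inductive} as a chain $\eta_0,\dots,\eta_M$ of cocharacters in adjacent $\lambda$-chambers, with $\eta_0$ in the MV region and $\eta_M$ in the KL chamber, and with the wall separating $\eta_m$ from $\eta_{m+1}$ equal to $H_{\alpha^\vee_{t_{m+1}}}$ where $t_{m+1}=y_{m+1}y_m^{-1}$, retaining only the indices $m$ for which this is a $\lambda$-wall. In the MV region the recharge is forced: $r(\eta_0,T)=-\langle\wt(T),\rho^\vee\rangle$.

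\textbf{Crossing the walls.} Write $\alpha^\vee_{t_{m+1}}=c\,\delta-\beta^\vee$. By the inversion order \eqref{reflectionorder}, $\beta=\alpha_{j,n}\in\Phi_+\setminus\Phi_{n-1}$, so by \Cref{fandatoms} the operators $e_\beta,f_\beta$ preserve the LL atoms. The collection $\psi^{(m)}_{t\mu}(T)=e_\beta^{\langle\mu,\beta^\vee\rangle+c}(T)$ is a swapping function for the recharge $r(\eta_m,-)$ produced so far along $H_{\alpha^\vee_{t_{m+1}}}$ (this is \Cref{eswapping2}, established by restricting to a single atom and invoking \Cref{Gammam}); hence \Cref{mainthmgen} yields a recharge $r(\eta_{m+1},-)$. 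Iterating over all $M$ walls produces a recharge $r(\eta_M,-)$ for the KL chamber, obtained via swapping operations, and it remains to identify it with $Z(T)-\ell(\wt(T))$.

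\textbf{Identifying the recharge, one atom at a time.} Using \eqref{phi-eps}, the membership $T\in\Ima(\psi^{(m)}_{t\mu})$ appearing in \eqref{reta2} unwinds to $\phi_\beta(T)\geq\langle\mu,\beta^\vee\rangle+c$, i.e.\ to $\eps_\beta(T)\geq c$; and for $\beta=\alpha_{j,n}$ with $T$ in the LL atom $\calA$ of highest weight $\lambda'$, \Cref{fandatoms} gives $\eps_\beta(T)=\max\{k:\wt(T)+k\beta\leq\lambda'\}$ and $\phi_\beta(T)=\max\{k:\wt(T)-k\beta\leq\lambda'\}$. Since each weight space of $\calA$ is one-dimensional, $\wt$ identifies $\calA$ with the interval $I(\lambda')$, and I would check that, under this identification, (i) only the $\lambda'$-walls among the $\lambda$-walls alter $r$ on $\calA$, and (ii) the three cases of \eqref{reta2} restricted to $\calA$ reproduce exactly the three cases of \eqref{Arrm+1} for the passage $\Gamma^m_{\lambda'}\rightsquigarrow\Gamma^{m+1}_{\lambda'}$ — this is where \Cref{Gammam} (in-degree drops by one across a reversed edge) and the notion of $t$-reversing reflection from \Cref{reflections} enter. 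Consequently, on $\calA$, the cumulative change of $r$ from $\eta_0$ to $\eta_M$ is governed by the passage from $\Gamma^0_{\lambda'}$ to $\Gamma^\infty_{\lambda'}$, together with the $\Phi_{n-1}$-directions, which never contribute since their coroots lie outside $N(y_\infty)$. Feeding in the formula for $\Arr_\infty(\cdot,\lambda')$ from \Cref{inftyarrows}, the identifications of $\phi_{\alpha_{j,n}}$ above, and the fact that $Z$ is constant on $\calA$ with value $\langle\lambda',\rho^\vee\rangle$ (\Cref{atomicthm}, read off at the highest-weight vertex of $\calA$, where every $\eps_\alpha$ vanishes), one arrives at $r(\eta_M,T)=Z(T)-\ell(\wt(T))$.

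\textbf{The main difficulty} is point (ii): showing that the wall-crossing procedure restricted to an atom genuinely coincides with the wall-crossing procedure for the smaller interval $I(\lambda')$. One must verify that the ``$+1$'' alternative of \eqref{reta2} occurs on $\calA$ precisely when the reversed twisted-Bruhat edge stays inside $I(\lambda')$ — which forces one to transfer, to a single atom, the polytope argument behind \Cref{reflections} and \Cref{Gammam} — and that injectivity of the $\psi^{(m)}$ is compatible with the atom decomposition, so that the count of $\Ima(\psi^{(m)})$ can be done atom by atom. I expect this atom-local refinement of \Cref{Gammam}, matched against \Cref{fandatoms}, to be the core of the argument.
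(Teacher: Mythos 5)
There is a genuine gap, and it is located at the very start of your wall‑crossing bookkeeping. The family of \Cref{inductive} does \emph{not} run from the MV region to the KL region: it runs from the $(n-1)$-th \emph{parabolic} region to the KL region, and only the walls $H_{c\delta-\alpha_{j,n}^\vee}$ (labels in $N(y_\infty)$) separate these two regions. A path starting in the MV region, as in your proposal, must in addition cross all $\lambda$-walls $H_{c\delta-\beta^\vee}$ with $\beta\in(\Phi_{n-1})_+$, and for those walls no explicit swapping function is available: \Cref{eswapping2} is proved only for $\beta\in\Phi_+\setminus\Phi_{n-1}$, and moreover it \emph{assumes} \Cref{main} for $\Phi_{n-1}\subset\Phi$, because the base case $m=M$ of its reverse induction is the recharge in the parabolic region supplied by \Cref{rparabolic} and \Cref{Arrinf}, i.e.\ $r(\eta_P,T)=\shift(T)-\Arr_\infty(T)$. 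Your proposal contains no induction on the rank and anchors the computation at $r(\eta_0,T)=-\langle\wt(T),\rho^\vee\rangle$ in the MV region, so the walls in the $\Phi_{n-1}$-directions are simply unaccounted for. The claim that these directions ``never contribute since their coroots lie outside $N(y_\infty)$'' is false: their total contribution to the recharge between the MV and the KL regions is $\sum_{\alpha\in(\Phi_{n-1})_+}\bigl(\phi_\alpha(T)-\ell^\alpha(\wt(T))\bigr)$, which is exactly the term produced by the Levi-branching/rank-induction step of the paper and which your computation would miss, yielding only $-\langle\wt(T),\rho^\vee\rangle+\sum_{\beta\in\Phi_+\setminus\Phi_{n-1}}(\phi_\beta(T)-\ell^\beta(\wt(T)))$ rather than $\shift(T)-\ell(\wt(T))$.

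A symptom of the same omission is your assertion that $\shift$ is constant on an atom $\calA$ of highest weight $\lambda'$ with value $\langle\lambda',\rho^\vee\rangle$ because ``every $\eps_\alpha$ vanishes'' at the top of the atom. By \Cref{fandatoms} only the $\eps_\alpha$ with $\alpha\in\Phi_+\setminus\Phi_{n-1}$ vanish there; for $\alpha\in(\Phi_{n-1})_+$ the operators $e_\alpha$ need not preserve the atom and $\eps_\alpha$ is generically nonzero at the maximal element (this is visible in the proof of \Cref{chargecoincide}, where $c(T)=\sum_{\alpha\in(\Phi_{n-1})_+}\eps_\alpha(T)$ for $T$ maximal in its atom). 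So the correct value is $\langle\lambda',\rho^\vee\rangle+\sum_{\alpha\in(\Phi_{n-1})_+}\eps_\alpha(T)$, and the discrepancy is precisely the $\Phi_{n-1}$-contribution you dropped. The remainder of your plan (atom-by-atom identification of the three cases of \eqref{reta2} with \eqref{Arrm+1} via \Cref{Gammam}, and the use of \eqref{sfintro}) is in the spirit of the paper's proof of \Cref{eswapping2}, but to close the argument you must first establish the recharge in the parabolic region by induction on the rank (Levi branching as in \Cref{rparabolic}), and only then cross the walls labeled by $N(y_\infty)$ with the explicit swapping functions.
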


The proof of \Cref{main} is done by induction on the rank of $\Phi$ and it will be completed in \Cref{sec:conclusions}. The statement is trivial if $\rk(\Phi)=0$. The case $\rk(\Phi)=1$ is discussed in detail in \cite[\S 3.1]{ChargeGen}.

\begin{definition}
	Let $\beta\in \Phi_+$. For $\mu\in X$, we define the \emph{length along $\beta$} of $\mu$ as 
	\[ \ell^\beta(\mu)= \begin{cases}
		\langle \mu,\beta^\vee\rangle &\text{if }\langle \mu,\beta^\vee\rangle\geq 0\\
		-\langle \mu,\beta^\vee\rangle-1&\text{if }	\langle \mu,\beta^\vee\rangle<0.
	\end{cases}\] 
\end{definition}
In rank 1, the length along $\beta$ is simply the length. In general, by \cite[Eq. (10)]{ChargeGen}, we have that \[\ell(\mu)=\sum_{\beta\in \Phi^+} \ell^\beta(\mu).\]
We can rewrite \eqref{rfinal} as
\begin{equation}\label{rfinal2} r(\eta,T)=-\langle \wt(T),\rho^\vee\rangle+ \sum_{\beta\in\Phi_+}\left(\phi_\beta(T)-\ell^\beta(\wt(T))\right).\end{equation}

\subsection{The parabolic region}

Let $n=\rk(\Phi)$.
For $k\leq n$ let $\Phi_k\cu \Phi$ be the sub-root system with simple roots $\alpha_1,\ldots,\alpha_k$ and let $\affPhi_k^\vee\cu \affPhi^\vee$ be the corresponding affine root system.
\begin{definition}
	Let $\lambda \in X_+$.
	We say that a regular cocharacter $\eta$ is in the \emph{$k$-th $\lambda$-parabolic region} (or simply $k$-th parabolic region, if $\lambda$ is clear from context) if
	\begin{itemize}
		\item $\langle \eta,\alpha^\vee\rangle>0$ for every $\alpha^\vee\in (\affPhi^\vee_k)_+$ and for every $\alpha^\vee\in \affPhi^\vee_+$ of the form $m\delta+\beta^\vee$ such that $m\geq 0$ and $\beta\in \Phi_+$,
		\item $\langle \eta,\alpha^\vee\rangle<0$ for every $\alpha^\vee\in \affPhi^\vee_+(\lambda)$ of the form $m\delta-\beta^\vee$ such that $m> 0$ and $\beta\in \Phi_+\setminus \Phi_k$.
	\end{itemize} 
\end{definition}

Let $X_{n-1}\cu X$ denote the subgroup generated by $\varpi_1,\varpi_2,\ldots,\varpi_{n-1}$.

\begin{lemma}\label{rparabolic}
	Let $n=\rk(\Phi)$	and assume \Cref{main} holds for $\Phi_{n-1} \cu \Phi$. Let $\eta_P$ be in the $(n-1)$-th parabolic region. Then
	\[r(\eta_P,T)=-\langle \wt(T),\rho^\vee\rangle+\sum_{\alpha\in(\Phi_{n-1})_+}\left(\phi_\alpha(T)-\ell^\alpha(\wt(T))\right)\]
	is a recharge for $\eta_P$.
\end{lemma}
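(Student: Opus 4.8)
The plan is to reduce the statement about the $(n-1)$-th parabolic region to an application of the $\mathrm{rk}=n-1$ case of \Cref{main}, using the geometry of hyperbolic localization to compare the Kazhdan--Lusztig data on $\Gr^\vee$ with that of the Levi subgroup. First I would observe that the $(n-1)$-th parabolic region is separated from the KL region only by walls $H_{m\delta-\beta^\vee}$ with $m>0$ and $\beta\in\Phi_+\setminus\Phi_{n-1}$; these are exactly the walls not belonging to the affine root system $\affPhi^\vee_{n-1}$ of the Levi. Crossing such a wall in a family of cocharacters $\eta(t)$ reverses precisely the edges of the Bruhat graph labeled by roots outside $\Phi_{n-1}$, which is the computation already carried out in the passage leading to \Cref{inftyarrows}: for $\beta\in\Phi_+\setminus\Phi_{n-1}$ the restriction of $<_\infty$ to $\{\mu+k\beta\}_k$ is the dominance order, while for $\beta\in(\Phi_{n-1})_+$ the arrows are unchanged. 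So I would identify the $(n-1)$-th parabolic region with the chamber $\Gamma^\infty$ of \Cref{inftyarrows}, up to the walls internal to $\affPhi^\vee_{n-1}$.

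Next I would invoke the parabolic/Levi compatibility of hyperbolic localization on the affine Grassmannian: when $\eta_P$ is chosen so that the attracting/repelling directions along the roots outside the Levi are all ``positive'' (i.e. $\langle\eta_P,m\delta\pm\beta^\vee\rangle$ has a fixed sign for $\beta\notin\Phi_{n-1}$), the functor $\HL^{\eta_P}_\mu(\IC_\lambda)$ decomposes according to the $\Gr$ of the Levi $L_{n-1}$ of $PGL_{n+1}$ with Dynkin diagram $\alpha_1,\ldots,\alpha_{n-1}$. Concretely, by the first-sign condition the repelling set near each $\affT$-fixed point is governed only by $\affPhi^\vee_{n-1}$, so $\htil^{\eta_P}_{\mu,\lambda}(v)$ equals the corresponding twisted KL polynomial for the Levi, summed with a shift recording $\sum_{\beta\in\Phi_+\setminus\Phi_{n-1}}$ of the ``dominance-order distance to $\lambda$''. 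This is the step where I'd lean on \Cref{inftyarrows}: it exactly computes $\Arr_\infty(\mu,\lambda)$ as $\sum_{\beta\notin\Phi_{n-1}}\max\{k\mid\mu-k\beta\leq\lambda\}+\sum_{\beta\in(\Phi_{n-1})_+}\ell^\beta(\mu)$, and the extra summand over $\beta\notin\Phi_{n-1}$ is precisely $\langle\mu,\rho^\vee\rangle$ minus its analogue for the Levi $\rho$, reconciling the $-\langle\wt(T),\rho^\vee\rangle$ in the claimed formula with the $-\langle\wt(T),\rho_{n-1}^\vee\rangle$ coming from the inductive hypothesis.

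With these two ingredients, the proof is bookkeeping: apply \Cref{main} for $\Phi_{n-1}$ to the crystal $\calB(\lambda)$ Levi-branched to $\langle\alpha_1,\ldots,\alpha_{n-1}\rangle$, obtaining a recharge for the KL region of the Levi of the form $Z_{n-1}(T)-\ell_{n-1}(\wt(T))=-\langle\wt(T),\rho_{n-1}^\vee\rangle+\sum_{\alpha\in(\Phi_{n-1})_+}(\phi_\alpha(T)-\ell^\alpha(\wt(T)))$, then add back the contribution of the roots outside the Levi. Since those roots contribute a term $\sum_{\beta\in\Phi_+\setminus\Phi_{n-1}}\langle\wt(T),\beta^\vee\rangle$ to $\langle\wt(T),\rho^\vee\rangle$, and along each such $\beta$-line the dominance order coincides with $<_\infty$ (so $\ell^\beta_\infty$ contributes nothing new, absorbed into the shift by $\ell(\wt(T))$), the two expressions agree term by term and yield $r(\eta_P,T)=-\langle\wt(T),\rho^\vee\rangle+\sum_{\alpha\in(\Phi_{n-1})_+}(\phi_\alpha(T)-\ell^\alpha(\wt(T)))$. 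Finally I would remark that $\eta_P$ and $\eta_\infty$ of \Cref{inftyarrows} lie in the same region up to walls of $\affPhi^\vee_{n-1}$, across which the recharge given by the inductive hypothesis is already correctly transported by the Levi's swapping functions, so no further modification is needed.

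The main obstacle I anticipate is making the Levi-compatibility step precise without circularity: one must ensure that the inductive hypothesis \Cref{main} for $\Phi_{n-1}$ is applied to the \emph{affine} Grassmannian of the Levi of $PGL_{n+1}$ (not of $GL_{n-1}$ or $SL_n$ separately), and that the central torus directions and loop-rotation direction are matched up correctly so that the shift by $\sum_{\beta\notin\Phi_{n-1}}$ comes out exactly as the difference $\langle\cdot,\rho^\vee-\rho_{n-1}^\vee\rangle$. The sign conditions in the definition of the $(n-1)$-th parabolic region are tailored precisely so that $\HL^{\eta_P}$ ``sees'' only the Levi in the repelling directions; verifying this amounts to checking that the fixed-point-wise local structure of $\IC_\lambda$ restricted to repelling cells is controlled by $\affPhi^\vee_{n-1}$, which should follow from the hyperbolic localization formalism of \cite[\S 2.4]{ChargeGen} together with \Cref{inftyarrows}, but it is the one point requiring care.
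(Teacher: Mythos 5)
Your argument hinges on a ``parabolic/Levi compatibility of hyperbolic localization'': that for $\eta_P$ in the $(n-1)$-th parabolic region the functor $\HL^{\eta_P}_\mu(\IC_\lambda)$ is governed by the affine Grassmannian of the Levi, so that $\htil^{\eta_P}_{\mu,\lambda}$ can be read off from rank-$(n-1)$ data plus an explicit shift. This is exactly the step you flag as ``requiring care'', and it is a genuine gap: no such statement is proved here or in the quoted parts of \cite{ChargeGen}, and the only tools actually available for computing $\htil^{\eta}_{\mu,\lambda}$ are the explicit answer in the MV region and the wall-crossing formula \eqref{reta2}. Your alternative access to the parabolic chamber --- approaching it from the KL region through the walls $H_{m\delta-\beta^\vee}$ with $\beta\in\Phi_+\setminus\Phi_{n-1}$, i.e.\ via $\Gamma^\infty$ and \Cref{inftyarrows} --- is circular: the swapping functions for precisely those walls are only constructed in \Cref{eswapping2}, whose induction uses the present lemma (through \Cref{Arrinf}) as its base case. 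You would also need to say how the multiplicities with which the Levi crystals $\calB(\lambda')$ occur in the branching of $\calB(\lambda)$ enter the graded-dimension identity; and the reconciliation of the $\rho^\vee$-terms as written is off (the roots outside the Levi contribute $\tfrac12\sum_{\beta\notin\Phi_{n-1}}\langle\wt(T),\beta^\vee\rangle=\langle\wt(T),\rho^\vee-\rho^\vee_{n-1}\rangle$, which is merely \emph{constant} on each branched component, not termwise identifiable with anything in \Cref{inftyarrows}).

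The paper's route avoids all of this by approaching the parabolic region from the MV side rather than the KL side. The walls separating the MV region from the $(n-1)$-th parabolic region are exactly $H_{m\delta-\beta^\vee}$ with $m>0$ and $\beta\in(\Phi_{n-1})_+$, i.e.\ precisely the MV-to-KL walls of type $A_{n-1}$ inside $\affX_{n-1}=(X_{n-1})_\bbR\oplus\bbR d$. By the inductive hypothesis there is a chain of cocharacters $\eta'_0,\dots,\eta'_M$ in $\affX_{n-1}$ with swapping functions $\psi'_i$ realizing the rank-$(n-1)$ recharge on each branched component $\calB(\lambda')$; lifting it to $\eta_i=\eta'_i+A\varpi_n$ with $A\gg 0$ crosses the same walls in $\affX$, and since the difference of the two MV recharges is the constant $-\langle\wt(-),\rho^\vee-\rho^\vee_{n-1}\rangle$ on each $\calB(\lambda')$, the $\psi'_i$ remain swapping functions for the lifted chain and transport $-\langle\wt(T),\rho^\vee\rangle$ to the stated formula at $\eta_M=\eta_P$. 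This uses only \Cref{mainthmgen} and the inductive hypothesis, with no geometric input beyond what is already established; I would encourage you to rework your argument along these lines.
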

\begin{proof}
	The walls that separate the MV region from the $(n-1)$-th parabolic region are all of the form $H_{m\delta-\beta^\vee}$ with $m> 0$ and $\beta \in (\Phi_{n-1})_+$. 
	Intersecting them with $\affX_{n-1}=(X_{n-1})_\bbR \oplus \bbR d$, we obtain precisely the walls separating the MV region from the KL region in type $A_{n-1}$.
	
	After Levi branching to $\Phi_{n-1}$, the crystal $\calB(\lambda)$ splits up as the disjoint union of crystals of type $A_{n-1}$. Let $\calB(\lambda')$ be one of these crystals.
	
	\Cref{main} holds for $\calB(\lambda')$ by assumption. Hence, we can find a sequence of cocharacters
	$\eta'_0,\ldots,\eta'_M\in X_{n-1}\otimes \bbZ d$ such that 
	$\eta'_0$ is in the MV chamber, $\eta'_M$ in the KL region  and that for any $i$ the cocharacters $\eta'_i$ and $\eta'_{i+1}$ are in adjacent chambers. By assumption, there exist swapping functions $\psi'_i$ between $\eta'_i$ and $\eta'_{i+1}$ such that the recharge for $\eta'_M$ obtained via those swapping operations is
	\[r(\eta'_M,T)-r(\eta_0',T)=\sum_{\alpha\in (\Phi_{n-1})_+}\left(\phi_\alpha(T)-\ell^\alpha(\wt(T))\right)\]

	We can find another sequence of cocharacters $\eta_0,\ldots,\eta_M\in \affX$ such that $\eta_0$ is in the MV chamber and that, for any $i$, the unique wall $H=H_{m\delta-\beta^\vee}$ separating  $\eta_i$ from $\eta_{i+1}$ is the same wall which separates  $\eta'_i$ and $\eta'_{i+1}$. (For example, we can take $\eta_i=\eta'_i+A\varpi_n$ with $A\gg0$.)

	Observe, by \eqref{rMV}, that $r(\eta_0,-)-r(\eta_0',-)$ is constant on the crystal $\calB(\lambda')$. Then, by induction on $i$, we see that $\psi'_i$ is also a swapping function between $\eta_i$ and $\eta_{i+1}$ on $\calB(\lambda')$ and we can extend to a swapping function $\psi_i$ on $\calB(\lambda)$. It follows that for any $T\in \calB(\lambda')$ we have that
	the function
	\[r(\eta_i,T)=r(\eta'_i,T)-r(\eta_0',T)+r(\eta_0,T)\]
	is a recharge for $\eta_i$. The claim follows since $\eta_M$ is in the $(n-1)$-th parabolic region. 
\end{proof}

Therefore, we can assume by induction that we know a recharge for $\eta_P$ in $(n-1)$-th parabolic region. It remains to be understood how to pass from the parabolic to the KL region. This inductive step is carried out in \Cref{inductive} by explicitly constructing  the swapping functions.


Recall that if $\calA$ is an atom of highest weight $\lambda'$, the weights occurring in $\calA$ are precisely the vertices of $\Gamma_{\lambda'}$.

\begin{definition}
		Let $T\in \calB(\lambda)$ and let $\calA$ be the LL atom of highest weight $\lambda'$ containing $T$. 
		For $m\in \bbN \cup \{\infty\}$, we denote by $\Arr_m(T):=\Arr_m(\wt(T),\lambda')$ the number of arrows pointing to $\wt(T)$ in $\Gamma_{\lambda'}^m$.
\end{definition}

By \eqref{Arrm+1} we have
\begin{equation}\label{Arrm+1T}
\Arr_{m+1}(T)=\begin{cases}\Arr_{m}(T)-1& \text{if }t(\wt(T))<\wt(T),\\
\Arr_{m}(T)+1& \text{if }\wt(T)<t(\wt(T))\leq \lambda',\\
\Arr_{m}(T)& \text{if }\wt(T)<t(\wt(T))\not \leq \lambda'\text{ or if }t(\wt(T))=\wt(T),
\end{cases}
\end{equation}
and by \Cref{inftyarrows} we have
\begin{equation}\label{arrinfty} \Arr_{\infty}(T)=\sum_{\beta\in \Phi_+\setminus \Phi_{n-1}} \phi_\beta(T)+\sum_{\beta\in (\Phi_{n-1})_+}\ell^\beta(\wt(T))\end{equation}
In fact, for $\beta\in \Phi_+\setminus \Phi_{n-1}$, if $T$ belongs to an atom of highest weight $\lambda'$, by  \Cref{fandatoms} we have
\[ \max\{ k\in \bbN \mid \wt(T)-k\beta\}= \phi_\beta(T).\]

Applying \eqref{Arrm+1T} and \Cref{atomicdef}, we can restate \Cref{rparabolic}.
\begin{corollary}\label{Arrinf}
		Assume that \Cref{main} holds for $\Phi_{n-1} \cu \Phi$. Let $\eta_P$ be in the $(n-1)$-th parabolic region. Then
		\[r(\eta_P,T)=\shift(T)-\Arr_{\infty}(T)\]
is a recharge for $\eta_P$.
\end{corollary}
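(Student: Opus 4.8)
The plan is to recognize that \Cref{Arrinf} is a purely notational repackaging of \Cref{rparabolic}, so that the entire task reduces to verifying a single algebraic identity. Since we are assuming \Cref{main} for $\Phi_{n-1}\cu\Phi$, \Cref{rparabolic} applies and tells us that
\[
 r(\eta_P,T)=-\langle \wt(T),\rho^\vee\rangle+\sum_{\alpha\in(\Phi_{n-1})_+}\bigl(\phi_\alpha(T)-\ell^\alpha(\wt(T))\bigr)
\]
is a recharge for $\eta_P$. Hence it suffices to show that for every $T\in\calB(\lambda)$ the right-hand side coincides with $\shift(T)-\Arr_\infty(T)$; once this is established, \Cref{Arrinf} follows at once from \Cref{rparabolic}.

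To prove the identity I would expand both quantities using tools already in place. By \Cref{atomicdef} we have $\shift(T)=-\langle\wt(T),\rho^\vee\rangle+\sum_{\alpha\in\Phi_+}\phi_\alpha(T)$, and by \eqref{arrinfty} we have $\Arr_\infty(T)=\sum_{\beta\in\Phi_+\setminus\Phi_{n-1}}\phi_\beta(T)+\sum_{\beta\in(\Phi_{n-1})_+}\ell^\beta(\wt(T))$. Subtracting, the terms $\phi_\beta(T)$ with $\beta\in\Phi_+\setminus\Phi_{n-1}$ cancel against the corresponding part of the first sum (using that $\Phi_+$ is the disjoint union of $(\Phi_{n-1})_+$ and $\Phi_+\setminus\Phi_{n-1}$), leaving exactly $-\langle\wt(T),\rho^\vee\rangle+\sum_{\alpha\in(\Phi_{n-1})_+}\phi_\alpha(T)-\sum_{\alpha\in(\Phi_{n-1})_+}\ell^\alpha(\wt(T))$, which is precisely $r(\eta_P,T)$. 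Two small points deserve a line of justification along the way: first, that $\Phi_+\setminus\Phi_{n-1}=\{\alpha_{j,n}\mid 1\le j\le n\}$, which is exactly the family of roots to which \Cref{fandatoms} --- and hence the derivation of \eqref{arrinfty} --- applies; and second, that the $\ell^\beta$ appearing in \eqref{arrinfty} (namely $\ell_0^\beta=\Arr_0^\beta(\cdot,\lambda')$, the number of arrows $\mu+k\beta\to\mu$ in $\Gamma_X$) agrees with the ``length along $\beta$'' used in \Cref{rparabolic}; this coincidence was recorded just before \Cref{inftyarrows}.

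I do not expect a genuine obstacle here. The substantive content --- that $r(\eta_P,-)$ is a recharge in the $(n-1)$-th parabolic region, built inductively from the type $A_{n-1}$ case --- is entirely carried by \Cref{rparabolic}; the present statement merely re-expresses the formula in terms of the atomic number $\shift$ and the incoming-arrow count $\Arr_\infty$ in the twisted Bruhat graph $\Gamma_{\lambda'}^\infty$ of the atom containing $T$. The only care required is the bookkeeping of which sums range over $\Phi_+$ and which over the standard parabolic subsystem $\Phi_{n-1}$, together with the identification of the two notions of $\ell^\beta$ noted above.
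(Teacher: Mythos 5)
Your proposal is correct and matches the paper's intent exactly: the corollary is obtained by restating \Cref{rparabolic} using \Cref{atomicdef} and the formula \eqref{arrinfty} for $\Arr_\infty(T)$, with the cancellation of the $\phi_\beta(T)$ terms for $\beta\in\Phi_+\setminus\Phi_{n-1}$ being the whole content. The two bookkeeping points you flag (the description of $\Phi_+\setminus\Phi_{n-1}$ and the identification of the two uses of $\ell^\beta$) are exactly the right ones to check.
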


\subsection{A Family of Cocharacters}\label{inductive}

In this section we perform the inductive step, constructing a recharge on a crystal $\calB(\lambda)$ in the KL region given a recharge in the $(n-1)$-th parabolic region

The walls that separate the $(n-1)$-th parabolic region from the KL region are precisely
\[ H_{m\delta-\alpha_{k,n}^\vee} \qquad \text{with }m>0\text{ and }1\leq k\leq n.\]

Every cocharacter $\eta_P$ of the form
\[ \eta_P= \sum_{i=1}^n A_i\varpi_i + Cd\]
with $0\ll A_1,A_2, \ldots, A_{n-1}\ll C \ll A_n$ lies in the $(n-1)$-th parabolic region.\footnote{More precise conditions on $\eta_P$ for this to happen are
	\begin{itemize}
		\item $A_1,A_2, \ldots, A_{n-1}>0$,
		\item $C> \sum_{i=1}^{n-1}A_i$,
		\item $A_n> CN$ where $N$ is the largest integer such that $N\delta-\beta^\vee\in \affPhi^\vee(\lambda)$ for some $\beta^\vee \in \Phi^\vee$. 
	\end{itemize}
}

We consider the following family of cocharacters.
\begin{equation}\label{family}
	\eta:\bbQ_{\geq 0}\ra \affX_\bbQ,\qquad \eta(t)=\eta_P+t d.
\end{equation} 
Observe that $\eta(t)$ is in the KL chamber for $t\gg 0$. 
If the integer $N$ is the maximum such that $N\delta-\beta^\vee$ is a root in $\affPhi^\vee(\lambda)$ for some $\beta\in \Phi_+$, then $\eta(t)$ intersects the walls in the following order.
\[H_{N\delta-\alpha_{n}^\vee},H_{N\delta-\alpha_{n-1,n}^\vee},\ldots, H_{N\delta-\alpha_{1,n}^\vee},H_{(N-1)\delta-\alpha_{n}^\vee},\ldots, H_{(N-1)\delta-\alpha_{1,n}^\vee},\ldots, H_{\delta-\alpha_{1,n}^\vee}.\]

Observe that the labels of these walls are precisely the elements of $N(y_M)$, for some $M>0$, although the order in which they occur is the opposite of \eqref{reflectionorder}.

This means that we can choose $0=t_M<t_{M-1}<\ldots t_1<t_0$ such that $\eta(t_i)$ and $\eta(t_{i+1})$ lie in adjacent $\lambda$-chambers for any $i$ with $\eta(t_M)$ lying in the $(n-1)$-th parabolic region and $\eta(t_0)$ in the KL region.\footnote{We enumerate the elements in $t_M,\ldots,t_0$ in this order to make it compatible with the twisted Bruhat graphs $\Gamma_\lambda^m$  of \Cref{twistedbruhatsection}.}

 Set $\eta_m:=\eta(t_m)$ for any $0\leq m \leq M$.
For any $\alpha^\vee\in \affPhi^\vee_+$ we have
 \[ \langle \eta_m,\alpha^\vee\rangle <0 \iff\alpha^\vee \in N(y_m).\]

The only $\lambda$-wall separating $\eta_m$ and $\eta_{m+1}$ is $H_{\alpha_t^\vee}$ where $t=t_{m+1}$ is as in \eqref{tm}.
We prove that modified crystal operators induce the swapping functions and at the same time give a formula for the recharge for any cocharacter $\eta_m$ in the family.

\begin{prop}\label{eswapping2}
Assume that \Cref{main} holds for $\Phi_{n-1}\subset \Phi$.	
Let $0\leq m\leq M$ and let $\alpha_t^\vee=c\delta-\beta^\vee$ for $c>0$ and $\beta^\vee\in \Phi_+$ for $t=t_{m+1}$. 
\begin{enumerate}
	\item The map	$r(\eta_m,T)=\shift(T)-\Arr_{m}(T)$ is a recharge for $\eta_m$.
\item For any $\mu\in X$ such that $\mu<t\mu\leq \lambda$ we define
\[ \psi_{t\mu}:\calB(\lambda)_{t\mu}\ra 
\calB(\lambda)_{\mu},\qquad
\psi_{t\mu}(T)=
e_\beta^{\langle \mu,\beta^\vee	\rangle +c}(T).\]
Then, $\psi=\{\psi_\nu\}$ is a swapping function between $\eta_{m+1}$ and $\eta_m$.
\end{enumerate}
\end{prop}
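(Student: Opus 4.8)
The plan is to prove both parts together by downward induction on $m$, since they feed into each other through \Cref{mainthmgen}. The base case is $m=M$ (part (1) only): as $\eta_M=\eta_P$ lies in the $(n-1)$-th parabolic region, \Cref{Arrinf} --- where the inductive hypothesis that \Cref{main} holds for $\Phi_{n-1}$ enters --- gives that $\shift(T)-\Arr_\infty(T)$ is a recharge for $\eta_M$; and for $M$ large enough one has $\Gamma^M_{\lambda'}=\Gamma^\infty_{\lambda'}$ for every highest weight $\lambda'$ of an LL atom of $\calB(\lambda)$, so $\Arr_M(T)=\Arr_\infty(T)$ and part (1) holds at $m=M$. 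For the inductive step fix $m<M$, put $t=t_{m+1}$ and write $\alpha_t^\vee=c\delta-\beta^\vee$; from \eqref{reflectionorder} one reads off $c>0$ and $\beta=\alpha_{k,n}\in\Phi_+\setminus\Phi_{n-1}$ for some $k$, which is precisely the shape of root to which \Cref{fandatoms} applies, and moreover $\langle\eta_{m+1},\alpha_t^\vee\rangle<0<\langle\eta_m,\alpha_t^\vee\rangle$, so $(\eta_{m+1},\eta_m)$ sits in the position of $(\eta_1,\eta_2)$ of \Cref{mainthmgen}. Assuming part (1) at $m+1$, i.e. that $r(\eta_{m+1},T)=\shift(T)-\Arr_{m+1}(T)$ is a recharge for $\eta_{m+1}$, it suffices to (a) check that $\psi$ is a swapping function for this recharge and (b) identify the recharge that \Cref{mainthmgen} manufactures from $\psi$ with $\shift(T)-\Arr_m(T)$; part (1) at $m$ then follows.

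For (a), a direct weight computation using \cite[Lemma 2.7]{ChargeGen} shows that the edge joining $\mu$ and $t\mu$ has root $(k'-\langle t\mu,\beta^\vee\rangle)\delta-\beta^\vee$ with $t\mu-\mu=k'\beta$, so comparison with $c\delta-\beta^\vee$ together with $c>0$ and the description of the Bruhat order along a $\beta$-line in \cite[Eq. (9)]{ChargeGen} forces $k'=-N$ where $N:=\langle\mu,\beta^\vee\rangle+c>0$, and then $\wt(e_\beta^N(T))=\mu$. If $T\in\calB(\lambda)_{t\mu}$ lies in the LL atom $\calA$ of highest weight $\lambda'$, then $t\mu$ is a weight of $\calA$, hence $\mu<t\mu\le\lambda'$, and \Cref{fandatoms} gives $e_\beta^N(T)\ne 0$; since $e_\beta$ preserves $\calA$ and weights have multiplicity one there, $\psi_{t\mu}(T)$ is the unique element of $\calA$ of weight $\mu$. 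Thus $\psi_{t\mu}$ is a well-defined map $\calB(\lambda)_{t\mu}\to\calB(\lambda)_\mu$, injective because $f_\beta^N e_\beta^N=\mathrm{id}$ wherever $e_\beta^N\ne 0$ (a crystal identity inherited from $f_i,e_i$ under $W$-conjugation). Since $\shift$ is constant on atoms (\Cref{atomicthm}), $\shift(\psi_{t\mu}(T))=\shift(T)$, so the required relation $r(\eta_{m+1},\psi_{t\mu}(T))=r(\eta_{m+1},T)-1$ reduces to $\Arr_{m+1}(\mu,\lambda')=\Arr_{m+1}(t\mu,\lambda')+1$. This is the crux of the argument: by \eqref{Arrm+1} the two sides equal $\Arr_m(\mu,\lambda')+1$ and $\Arr_m(t\mu,\lambda')-1$, and \Cref{Gammam}, applied with $\lambda'$ in place of $\lambda$, asserts exactly that $\Arr_m(\mu,\lambda')=\Arr_m(t\mu,\lambda')-1$.

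For (b), since each $\psi_{t\nu}$ is a power of $e_\beta$ and $e_\beta$ preserves atoms, $\psi_{t\nu}(S)$ lies in the atom of $S$; hence an element $T$ of weight $\nu$ in the atom $\calA$ of highest weight $\lambda'$ lies in $\Ima(\psi)$ if and only if $\nu<t\nu\le\lambda'$. Indeed $T\in\Ima(\psi_{t\nu})$ forces $t\nu$ to be a weight of $\calA$, and conversely when $\nu<t\nu\le\lambda'$ the injection $\psi_{t\nu}\colon\calA\cap\calB(\lambda)_{t\nu}\to\calA\cap\calB(\lambda)_\nu$ is a map between one-element sets, hence onto, so $T$ is hit (here one uses $\lambda'\le\lambda$, valid since every weight of $\calB(\lambda)$ lies in $I(\lambda)$). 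With this description of $\Ima(\psi)$ in hand, the three cases of the formula \eqref{reta2} defining $r(\eta_m,-)$ --- namely $t\nu<\nu$; $\nu<t\nu\le\lambda$ with $T\in\Ima(\psi)$; and $\nu\le t\nu$ with $T\notin\Ima(\psi)$ --- match up, atom by atom, with the three cases of \eqref{Arrm+1T} expressing $\Arr_{m+1}(T)$ in terms of $\Arr_m(T)$, and in each case one reads off $r(\eta_m,T)=\shift(T)-\Arr_m(T)$. This closes the induction.

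The main obstacle is step (a): once the atom-invariant term $\shift$ has been cancelled, the whole content of ``$\psi$ lowers the recharge by $1$'' is the in-degree identity $\Arr_m(\mu,\lambda')=\Arr_m(t\mu,\lambda')-1$ of \Cref{Gammam}. The surrounding bookkeeping --- that $e_\beta$ with $\beta=\alpha_{k,n}$ preserves atoms and is invertible on them (\Cref{fandatoms}), that $\shift$ is an atom invariant (\Cref{atomicthm}), and the exact characterization of $\Ima(\psi)$ above --- is what allows the three cases of \Cref{mainthmgen} to match \eqref{Arrm+1T} with no leftover terms.
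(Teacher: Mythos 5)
Your proposal is correct and follows essentially the same route as the paper: downward induction from $m=M$ with base case given by \Cref{Arrinf}, the key step being the in-degree identity of \Cref{Gammam} combined with \eqref{Arrm+1T} to show $\psi$ lowers the recharge for $\eta_{m+1}$ by one, and then matching the cases of \eqref{reta2} against \eqref{Arrm+1} to identify the resulting recharge with $\shift(T)-\Arr_m(T)$. Your write-up is somewhat more explicit than the paper's (the weight computation $\wt(\psi_{t\mu}(T))=\mu$, injectivity, and the characterization of $\Ima(\psi)$ are spelled out rather than left implicit), but the mathematical content is the same.
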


\begin{proof}

Let $T\in \calB(\lambda)$. Let $\calA$ be an atom of highest weight $\lambda'$ containing $T$. 

Let $M$ be the smallest integer with 
 $\Gamma_{\lambda'}^M =\Gamma_{\lambda'}^\infty$. For $m= M$ the first claim directly follows from \Cref{Arrinf}. We proceed now by reverse induction, assuming  the first statement for $m+1$ and proving both statements for $m$. 
Assume that $\wt(T)=t\mu$. Define
\[\psi_{t\mu}(T):=e_\beta^{\langle\mu,\beta^\vee\rangle+c}(T).\]
We have  $\wt(\psi_{t\mu}(T))=\mu$. Moreover,  $\psi_{t\mu}(T)\in \calA$ since $\beta\not \in \Phi_{n-1}$. By \Cref{Gammam}, we have $\Arr_m(\psi_{t\mu}(T))=\Arr_m(T)-1$.
Recall from \eqref{Arrm+1T} that $\Arr_{m+1}(T)=\Arr_{m}(T)-1$
and that $\Arr_{m+1}(\psi_{t\mu}(T))=\Arr_{m}(\psi_{t\mu}(T))+1$. It follows that
\[\Arr_{m+1}(T)=\Arr_{m+1}(\psi_{t\mu}(T))-1.\]
 The atomic number $Z$ is constant on $\calA$. Therefore, for any $T\in \calB(\lambda)_{t\mu}$, we have
\[ r(\eta_{m+1},\psi_{t\mu}(T))=Z(T)-\Arr_{m+1}(\psi_{t\mu}(T))=r(\eta_{m+1},T)-1.\]
Hence, $\psi$ is a swapping function between $\eta_{m+1}$ and $\eta_{m}$ and the second part is proven for $m+1$.
We now construct $r(\eta_m,-)$ from $r(\eta_{m+1},-)$ and $\psi$ by \eqref{reta2}. 
We can rewrite it in our setting as follows.
\begin{equation}\label{retafinal}
r(\eta_m,T)=\begin{cases}r(\eta_{m+1},T)-1& \text{if }t(\wt(T))<\wt(T),\\
r(\eta_{m+1},T)+1& \text{if }\wt(T)<t(\wt(T))\leq \lambda',\\
r(\eta_{m+1},T)& \text{if }\wt(T)<t(\wt(T))\not \leq \lambda'\text{ or if }t(\wt(T))=\wt(T)
\end{cases}
\end{equation}
Combining \eqref{retafinal} and \eqref{Arrm+1}, it  follows that $r(\eta_m,T)=\shift(T)-\Arr_m(T)$ and the proof is complete.
\end{proof}

\subsection{Proof of the main theorem and consequences}\label{sec:conclusions}

We now have all the ingredients to finally prove our main theorem.

\begin{proof}[Proof of \Cref{main}.]
	We prove the statement by induction on the rank $n$. The case $n=1$ has been established in \cite[\S 3.1]{ChargeGen}. 
	
	Let $\Phi$ the root system of type $A_n$ and assume now by induction that the statement holds in rank $n-1$.
	 In particular, it holds for $\Phi_{n-1}\subset\Phi$, so we can apply \Cref{eswapping2}. We have that $r(\eta_0,T)=Z(T)-\Arr_0(T)$ is a recharge in the KL chamber that can be obtained via swapping operations. 

Let $T$ be contained in an atom of highest weight $\lambda'$. Then $\Gamma^0_{\lambda'}=\Gamma_{\lambda'}$ is the (untwisted) Bruhat graph, so the number of arrows pointing to $\wt(T)$ in $\Gamma^0_{\lambda'}$ is $\ell(\wt(T))$ by \Cref{Xorder}. In other words, we have $\Arr_0(T)=\ell(\wt(T))$ and the proof is complete.
\end{proof}

By \eqref{charge}, we also obtain the following formula for the charge statistic on $\calB(\lambda)$.
\[
c(T)=\shift(T)-\frac12\ell(\wt(T)).\]
In particular, for $\wt(T)\in X_+$, this reduces to
\[c(T)=\sum_{\alpha\in \Phi_+}\eps_\alpha(T).\]

As a consequence, we give a combinatorial formula for the coefficients $a_{\mu,\lambda}(v)$ of the Kazhdan--Lusztig basis in terms of the atomic basis.

Let $\affHec$ denote the spherical Hecke algebra attached to the root system $\Phi\subset X$ (see \cite[\S 2.2]{LPPPre}). Then $\affHec$ has a standard basis $\{ \bfH_{\lambda}\}_{\lambda\in X_+}$ over $\bbZ[v,v^{-1}]$ and a canonical basis, called the Kazhdan--Lusztig basis $\{ \undH_{\lambda}\}_{\lambda\in X_+}$ such that
\[ \undH_\lambda= \sum_{\mu\leq \lambda} K_{\lambda,\mu}(v^2)\bfH_\mu.\]
We also have the \emph{atomic basis} $\{ \bfN_{\lambda}\}_{\lambda \in X_+}$ defined as 
\[\bfN_\lambda =\sum_{\mu\leq \lambda}v^{2\langle \lambda-\mu,\rho^\vee\rangle}\bfH_\mu\]

We can write $\undH_\lambda=\sum a_{\mu,\lambda}(v) \bfN_\mu$. An explicit description of the coefficients $a_{\mu,\lambda}(v)$ has been given in \cite{LPAffine} for type $A_2$ and in \cite{LPPPre} for types $A_3$ and $A_4$.
Here, we provide a combinatorial interpretation of these coefficients for type $A_n$.

\begin{corollary}\label{KLinN}
	Let $\calB(\lambda)=\bigcup_i \calA_i$ be the LL atomic decomposition. Then
	\[\undH_{\lambda}=\sum_{i} v^{2(\shift(\calA_i)-\langle \wt(\calA_i),\rho^\vee\rangle)} \bfN_{\wt(\calA_i)}.\]
	where $\shift(\calA_i)$ is the atomic number of any $T\in \calA_i$ and $\wt(\calA_i)$ is the highest weight of $\calA_i$.
\end{corollary}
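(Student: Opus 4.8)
The plan is to check the claimed identity coefficient-by-coefficient in the standard basis $\{\bfH_\nu\}_{\nu\in X_+}$, reducing everything to two facts already available: the charge formula $c(T)=\shift(T)-\tfrac12\ell(\wt(T))$ coming from \Cref{main} together with \eqref{charge} and \eqref{chargeeq}, and the fact (\Cref{atomicthm}) that $\shift$ is constant on each LL atom. The first preliminary step is the exponent identity: for a \emph{dominant} weight $\nu$ one has $\langle\nu,\beta^\vee\rangle\geq 0$ for all $\beta\in\Phi_+$, so $\ell(\nu)=\sum_{\beta\in\Phi_+}\ell^\beta(\nu)=\sum_{\beta\in\Phi_+}\langle\nu,\beta^\vee\rangle=2\langle\nu,\rho^\vee\rangle$. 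Hence for every $T\in\calB^+(\lambda)$ we have $c(T)=\shift(T)-\langle\wt(T),\rho^\vee\rangle$.

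Next I would do the computation one atom at a time. Fix an LL atom $\calA=\calA_i$, write $\mu:=\wt(\calA_i)\in X_+$ for its highest weight and $z:=\shift(\calA_i)$ for its common atomic number, and set $\calA^+:=\calA\cap\calB^+(\lambda)$. Expanding the definition of $\bfN_\mu$,
\[ v^{2(z-\langle\mu,\rho^\vee\rangle)}\,\bfN_{\mu}=v^{2(z-\langle\mu,\rho^\vee\rangle)}\sum_{\substack{\nu\in X_+\\ \nu\leq\mu}} v^{2\langle\mu-\nu,\rho^\vee\rangle}\bfH_\nu=\sum_{\substack{\nu\in X_+\\ \nu\leq\mu}} v^{2(z-\langle\nu,\rho^\vee\rangle)}\bfH_\nu. \]
By the definition of an atom the weights occurring in $\calA$ are exactly the $\nu\in X$ with $\nu\leq\mu$, each with multiplicity one; intersecting with $X_+$ (on $X_+$ this lower interval is precisely the index set appearing in $\bfN_\mu$) these are exactly the weights of the elements of $\calA^+$. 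Using $z=\shift(T)$ for $T\in\calA$ and the identity of the first step, this becomes
\[ v^{2(z-\langle\mu,\rho^\vee\rangle)}\,\bfN_{\mu}=\sum_{T\in\calA^+} v^{2(\shift(T)-\langle\wt(T),\rho^\vee\rangle)}\bfH_{\wt(T)}=\sum_{T\in\calA^+} v^{2c(T)}\bfH_{\wt(T)}. \]

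Finally I would sum over all atoms $\calA_i$. Since the LL atomic decomposition partitions $\calB(\lambda)$, the sets $\calA_i^+$ partition $\calB^+(\lambda)$, so
\[ \sum_i v^{2(\shift(\calA_i)-\langle\wt(\calA_i),\rho^\vee\rangle)}\,\bfN_{\wt(\calA_i)}=\sum_{T\in\calB^+(\lambda)} v^{2c(T)}\bfH_{\wt(T)}=\sum_{\substack{\nu\in X_+\\ \nu\leq\lambda}}\Big(\sum_{T\in\calB(\lambda)_\nu} v^{2c(T)}\Big)\bfH_\nu. \]
Because $c$ is a charge statistic, the inner sum is $K_{\lambda,\nu}(v^2)$, so the right-hand side equals $\sum_{\nu\leq\lambda}K_{\lambda,\nu}(v^2)\bfH_\nu=\undH_\lambda$, which is the assertion. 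The proof is essentially formal given \Cref{main} and \Cref{atomicthm}; the only points requiring a moment of care — and the place where an error would most naturally creep in — are the identification, for each atom, of the dominant weights in its Bruhat-lower interval with the index set of the corresponding $\bfN$, and the exponent bookkeeping $\ell(\nu)=2\langle\nu,\rho^\vee\rangle$ on $X_+$; neither is a substantial obstacle.
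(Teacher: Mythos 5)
Your proof is correct and follows essentially the same route as the paper's: both arguments rest on the charge formula of \Cref{main}, the constancy of $\shift$ on LL atoms, the fact that each dominant weight $\leq\wt(\calA_i)$ occurs exactly once in $\calA_i$, and the identity $\ell(\nu)=2\langle\nu,\rho^\vee\rangle$ for dominant $\nu$; the only difference is that you expand the right-hand side into the standard basis while the paper expands $\undH_\lambda$ and regroups by atoms. No gaps.
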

\begin{proof}
	
	For any $\lambda \in X_+$, by \Cref{main} we have
	\begin{align*}\undH_\lambda = \sum_{X_+\ni\mu\leq \lambda}h_{\mu,\lambda}(v) \bfH_\mu&=\sum_{T\in \calB^+(\lambda)}v^{2Z(T)-\ell(\wt(\mu))}\bfH_{\wt(T)}\\
		&=\sum_i v^{2\shift(\calA_i)-\ell(\wt(\calA_i))} \sum_{T\in \calA_i \cap \calB^+(\lambda)}v^{\ell(\wt(\calA_i))-\ell(\wt(T))}\bfH_{\wt(T)}.
		\end{align*}
	Since every dominant  weight $\leq \wt(\calA_i)$ occurs exactly once as the weight of an element in $\calA_i$, the claim now follows since
	\[\bfN_{\wt(\calA_i)}=\sum_{X_+\ni\mu \leq \wt(\calA_i)}v^{2\langle \lambda-\mu,\rho^\vee\rangle}\bfH_\mu=\sum_{T\in\calA_i\cap \calB^+(\lambda)}v^{\ell(\wt(\calA_i)-\wt(T)}\bfH_{\wt(T)}. \qedhere\]
	
\end{proof}

\begin{remark}
	We can give another interpretation, arguably more natural, of the formula \eqref{rfinal2} for  the recharge in the KL region.
	Let $\beta\in \Phi_+$. Moving from the MV chamber to the KL chamber we need to cross  all the walls in 
	\begin{equation}\label{betawalls}
		H_{M\delta-\beta^\vee},H_{(M-1)\delta-\beta^\vee},\ldots H_{\delta-\beta^\vee}
	\end{equation}
	in the given order, for some $M\geq 0$.
		Consider a single $\beta$-string: we can think of it as a crystal of type $A_1$, with crystal operator $f_\beta$.
	Each time we cross one of the walls in \eqref{betawalls}, by \Cref{eswapping2}, the recharge on the $\beta$-string varies in the same way as the recharge does on the corresponding crystal of type $A_1$. 
	
	This means that, for $T\in\calB(\lambda)$ and for any $\beta\in \Phi_+$, the contribution of the walls in \eqref{betawalls} to the total variation of the recharge is equal to the total variation of the corresponding $A_1$ crystal. Thanks to \cite[Lemma 3.8]{ChargeGen}, we know that this contribution is
	\[\phi_\beta(T)-\ell^\beta(\wt(T)).\]
	
	To obtain the total variation of the recharge between $\eta$ in the KL chamber and $\eta_0$ in the MV chamber we simply need to add up all these contributions for all $\beta\in \Phi_+$. This gives another explanation of the formula
	\[r(\eta,T)=r(\eta_0,T)+\sum_{\beta\in \Phi_+} \left(\phi_\beta(T)-\ell^\beta(\wt(T))\right).\qedhere\]
	
\end{remark}

\section{Comparison with Lascoux--Sch\"utzenberger's charge}\label{appendix}

In this Appendix we check that the  charge statistic we obtained coincides with Lascoux--Sch\"utzen\-berger's one. We use the formula given by Lascoux, Leclerc and Thibon in \cite{LLTCrystal} which is conveniently already in terms of the crystal graph. Let $d_i(T)=\min (\eps_i(T),\phi_i(T))$.
For $T\in \calB(\lambda)$ they define \[\gamma_n(T)=\frac{1}{(n+1)!}\sum_{\sigma \in W}\sum_{i=1}^n id_i(\sigma(T)).\]
The function $\gamma_n$ is a charge statistic on $\calB(\lambda)$ and it coincides with the charge defined in \cite{LSSur} in terms of the cyclage of tableaux after composing it with the anti-involution of $\calB(\lambda)$ which sends $f_i$ to $e_{n+1-i}$.

\begin{prop}\label{chargecoincide}
Let $n=\rk(\Phi)$. We have $c(T)=\gamma_{n}(T)$ for any $T\in \calB^+(\lambda)$.
\end{prop}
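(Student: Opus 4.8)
The plan is to show that both $c$ and $\gamma_n$ agree on $\calB^+(\lambda)$ with the quantity $\sum_{\alpha\in\Phi_+}\min(\eps_\alpha(T),\phi_\alpha(T))$. Since $c(T)=\shift(T)-\tfrac12\ell(\wt(T))$ makes sense on all of $\calB(\lambda)$ (as does $\gamma_n$), it is convenient to first massage both expressions globally and to invoke dominance only at the very end.

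\emph{Step 1: rewriting $\gamma_n$.} The starting point is the compatibility of the $W$-action on $\calB(\lambda)$ with the modified crystal operators: for $\sigma\in W$, $1\le i\le n$ and $T\in\calB(\lambda)$, writing $\gamma=\sigma^{-1}(\alpha_i)$, one has $\sigma^{-1}e_i\sigma=e_{\gamma}$ and $\sigma^{-1}f_i\sigma=f_{\gamma}$ whenever $\gamma\in\Phi_+$, i.e.\ $\eps_i(\sigma T)=\eps_\gamma(T)$ and $\phi_i(\sigma T)=\phi_\gamma(T)$. If instead $\sigma^{-1}(\alpha_i)=-\gamma'$ with $\gamma'\in\Phi_+$, then replacing $\sigma$ by $s_i\sigma$ (which reverses $\alpha_i$-strings, hence interchanges $\eps_i\leftrightarrow\phi_i$, while sending $\alpha_i\mapsto-\alpha_i$) reduces to the previous case and gives $\eps_i(\sigma T)=\phi_{\gamma'}(T)$, $\phi_i(\sigma T)=\eps_{\gamma'}(T)$. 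In either case, since $\min$ is symmetric, $d_i(\sigma T)=\min(\eps_{\bar\gamma}(T),\phi_{\bar\gamma}(T))$ with $\bar\gamma:=|\sigma^{-1}(\alpha_i)|\in\Phi_+$. Now in type $A_n$ the group $W=S_{n+1}$ acts transitively on $\Phi$ with every point-stabilizer of order $(n-1)!$, so for each fixed $i$ the number of $\sigma$ with $\sigma^{-1}(\alpha_i)=\pm\alpha$ equals $2(n-1)!$, independently of $i$ and of $\alpha\in\Phi_+$; hence
\[\sum_{\sigma\in W}d_i(\sigma T)=2\,(n-1)!\sum_{\alpha\in\Phi_+}\min(\eps_\alpha(T),\phi_\alpha(T)).\]
Multiplying by $i$ and summing, $\sum_{\sigma\in W}\sum_{i=1}^n i\,d_i(\sigma T)=\bigl(\sum_{i=1}^n i\bigr)\,2(n-1)!\sum_{\alpha\in\Phi_+}\min(\eps_\alpha(T),\phi_\alpha(T))=(n+1)!\sum_{\alpha\in\Phi_+}\min(\eps_\alpha(T),\phi_\alpha(T))$, so that $\gamma_n(T)=\sum_{\alpha\in\Phi_+}\min(\eps_\alpha(T),\phi_\alpha(T))$ for all $T\in\calB(\lambda)$.

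\emph{Step 2: rewriting $c$ and concluding.} From $c(T)=\shift(T)-\tfrac12\ell(\wt(T))$ and $\shift(T)=\langle\wt(T),\rho^\vee\rangle+\sum_{\alpha\in\Phi_+}\eps_\alpha(T)$, together with $2\langle\wt(T),\rho^\vee\rangle=\sum_{\alpha\in\Phi_+}\langle\wt(T),\alpha^\vee\rangle$, the decomposition $\ell(\wt(T))=\sum_{\alpha\in\Phi_+}\ell^\alpha(\wt(T))$, and $\phi_\alpha(T)-\eps_\alpha(T)=\langle\wt(T),\alpha^\vee\rangle$ from \eqref{phi-eps}, one gets
\[c(T)=\tfrac12\sum_{\alpha\in\Phi_+}\bigl(\eps_\alpha(T)+\phi_\alpha(T)-\ell^\alpha(\wt(T))\bigr).\]
If $T\in\calB^+(\lambda)$, then $\wt(T)\in X_+$, so $\langle\wt(T),\alpha^\vee\rangle=\phi_\alpha(T)-\eps_\alpha(T)\ge0$ for all $\alpha\in\Phi_+$; hence $\ell^\alpha(\wt(T))=\phi_\alpha(T)-\eps_\alpha(T)$ and $\min(\eps_\alpha(T),\phi_\alpha(T))=\eps_\alpha(T)$, so each summand is $2\eps_\alpha(T)=2\min(\eps_\alpha(T),\phi_\alpha(T))$. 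Therefore $c(T)=\sum_{\alpha\in\Phi_+}\min(\eps_\alpha(T),\phi_\alpha(T))=\gamma_n(T)$, which is the claim (and in passing this recovers $c(T)=\sum_{\alpha\in\Phi_+}\eps_\alpha(T)$ for dominant $\wt(T)$).

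\emph{The main obstacle} is making Step 1 rigorous, i.e.\ establishing $\sigma^{-1}e_{\alpha_i}\sigma=e_{\sigma^{-1}(\alpha_i)}$ for an \emph{arbitrary} $\sigma\in W$, whereas the Remark following the definition of the modified operators only handles $v\in\langle s_1,\dots,s_k\rangle$. To deduce the general case one writes $\sigma^{-1}(\alpha_i)=\alpha_{j,k}$, picks a reduced word for some $w$ carrying $\alpha_i$ to $\alpha_{j,k}$ through positive roots, conjugates $e_{\alpha_i}$ one simple reflection at a time using $s_m e_l s_m=e_{s_m(\alpha_l)}$ for adjacent $s_m,s_l$ and the commutation of $s_m$ with $e_l$ when $\langle\alpha_m,\alpha_l^\vee\rangle=0$, and finally notes that $\sigma^{-1}w\in\mathrm{Stab}(\alpha_{j,k})$ lies in a parabolic commuting with $e_{\alpha_{j,k}}$; alternatively one may simply cite the standard description of the $W$-action on crystals. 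Everything else is formal manipulation of identities already recorded in the paper.
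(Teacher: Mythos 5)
Your Step 2 is fine (for $\wt(T)\in X_+$ one indeed has $\ell^\alpha(\wt(T))=\phi_\alpha(T)-\eps_\alpha(T)$ and $c(T)=\sum_{\alpha\in\Phi_+}\eps_\alpha(T)$), but Step 1 contains a genuine gap that breaks the argument. The identity you rely on, $\eps_i(\sigma T)=\eps_{\sigma^{-1}(\alpha_i)}(T)$ and $\phi_i(\sigma T)=\phi_{\sigma^{-1}(\alpha_i)}(T)$ for arbitrary $\sigma\in W$ (equivalently $\sigma^{-1}e_i\sigma=e_{\sigma^{-1}(\alpha_i)}$), is false: the Weyl group acts on $\calB(\lambda)$ by string reversal, not by crystal automorphisms, and conjugating a crystal operator by different Weyl group elements carrying $\alpha_i$ to the same root gives genuinely different operators. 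This is already signalled in the paper, where $f_\alpha$ is defined by conjugation by the \emph{specific} element $s_j\cdots s_{k-1}$, the well-definedness remark only allows conjugators in $\langle s_1,\dots,s_k\rangle$, and it is stated that in general $f_\alpha\neq\ftil_\alpha$. A concrete counterexample lives in the adjoint crystal of $A_2$ ($\lambda=\varpi_1+\varpi_2$): let $T$ be the weight-zero element that is a singleton $\alpha_1$-string (so $\eps_1(T)=\phi_1(T)=0$, $\eps_2(T)=\phi_2(T)=1$, and $T$ is fixed by both $s_1$ and $s_2$). One checks $\eps_{\alpha_1+\alpha_2}(T)=\phi_{\alpha_1+\alpha_2}(T)=1$, so with $\sigma=s_2$, $i=1$, $\bar\gamma=\alpha_1+\alpha_2$ your claim would give $d_1(s_2T)=1$, whereas $d_1(s_2T)=d_1(T)=0$; likewise $s_2f_1s_2\neq s_1f_2s_1=f_{\alpha_1+\alpha_2}$ on this crystal. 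Consequently the counting identity $\sum_{\sigma\in W}d_i(\sigma T)=2(n-1)!\sum_{\alpha\in\Phi_+}\min(\eps_\alpha(T),\phi_\alpha(T))$ fails for this $T$ (the left side is $0$ for $i=1$ and $6$ for $i=2$, the right side is $4$), so the orbit-stabilizer averaging cannot be carried out independently for each $i$: the weights $i$ in Lascoux--Leclerc--Thibon's formula are essential and do not factor out.

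This is precisely the difficulty the paper's proof is designed to circumvent: it argues by induction on the rank, uses the compatibility of both $c$ and $\gamma_n$ with the atomic decomposition to reduce to elements $T$ that are maximal in their LL atom (where $\eps_\alpha(T)=0$ for $\alpha\notin\Phi_{n-1}$ by \Cref{fandatoms}), applies Levi branching to $\Phi_{n-1}$, and then proves the nontrivial averaging identity \eqref{key} by a coset decomposition of $W_{n-1}$ whose rank-two core is exactly \Cref{21explicit} (invariance of $\eps_i+\phi_{i+1}$ along $(\alpha_i+\alpha_{i+1})$-strings). If you want to salvage your approach, you would need to replace the false pointwise equivariance by statements of this weaker, summed form; as written, Step 1 does not hold.
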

\begin{proof}
	We argue by induction on $n$. The statement is trivial if $n=0$ and the case $n=1$ is handled in detail in \cite[\S 3.1]{ChargeGen}.
	
	In \cite[Theorem 6.5]{LLAtomic} it is shown that Lascoux--Sch\"utzenberger's charge $\gamma$ is compatible with the atomic decomposition, i.e. that if $\calA$ is a LL atom and $T,T'\in \calA$ are such that $\wt(T),\wt(T')\in X_+$ we have
	\[ \gamma_n(T')=\gamma_n(T)+\langle \wt(T)-\wt(T'),\rho^\vee\rangle.\]
	Clearly, also $c$ is compatible with the atomic decomposition. It is enough to check $c(T)=\gamma_n(T)$ for $T$ maximal in its atom.	
	In this case, by \Cref{fandatoms} we have $\eps_\alpha(T)=0$ for any $\alpha\not \in \Phi_{n-1}$, so
	\[c(T)=\sum_{\alpha\in (\Phi_{n-1})_+} \eps_\alpha(T).\]
	
	After Levi branching to $\Phi_{n-1}$, we have by induction $c(T)=\gamma_{n-1}(T)$.
	Moreover, we have $d_n(\sigma(T))=0$ for any $\sigma \in W$. Let $W_{k}\cu W$ be the subgroup generated by $s_1,\ldots,s_k$.
To conclude, it remains to show that
	\begin{equation}\label{key}
		\gamma_n(T)=\frac{1}{n!}\sum_{\sigma\in W_{n-1}}\sum_{i=1}^{n} id_i(\sigma(T))=\gamma_{n-1}(T).
	\end{equation}
We can rewrite \eqref{key} as follows.
\[\label{ugly3}\frac{1}{n}\left(\sum_{j = 1}^n\sum_{\sigma \in W_{n-1}} \sum_{i=1}^{n-1} id_i(\sigma s_n \ldots s_j(T))\right)=\sum_{\sigma\in W_{n-1}}\sum_{i=1}^{n-1} id_i(\sigma (T)).\]
This identity can be easily proven by applying the next Lemma
\end{proof}
	
\begin{lemma}
	For any $T\in \calB(\lambda)$ we have
	\begin{equation}\label{last!}\sum_{\sigma \in W_{n-1}} \sum_{i=1}^{n} id_i(\sigma s_n(T))=\sum_{\sigma \in W_{n-1}} \sum_{i=1}^{n} id_i(\sigma(T)).\end{equation}\end{lemma}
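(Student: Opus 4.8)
The plan is to prove \eqref{last!} by a coset reduction that isolates the reflections near the index $n$, followed by a rank-$2$ computation controlled by \Cref{21explicit}. Throughout I will only use that $d_i\circ s_i=d_i$ (the reflection $s_i$ swaps $\eps_i$ and $\phi_i$) and that $d_i\circ s_k=d_i$ whenever $|i-k|\ge 2$ (then $s_k$ commutes with $e_i$ and $f_i$). One orienting remark: \eqref{last!} is equivalent to saying that the function $T\mapsto\sum_{\sigma\in W_{n-1}}\sum_{i=1}^n i\,d_i(\sigma T)$ — which is patently $W_{n-1}$-invariant — is in fact invariant under all of $W$; this is precisely the first equality of \eqref{key}.

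First I would use that $W_{n-2}=\langle s_1,\dots,s_{n-2}\rangle$ commutes elementwise with $s_n$. Choose the transversal $V=\{e,s_{n-1},s_{n-2}s_{n-1},\dots,s_1s_2\cdots s_{n-1}\}$ of the left cosets $W_{n-1}/W_{n-2}$, so that $v_j:=s_js_{j+1}\cdots s_{n-1}$ acts on $\{1,\dots,n\}$ as the cycle $(j,j{+}1,\dots,n)$, and write each $\sigma\in W_{n-1}$ as $\sigma=v\tau$ with $v\in V$, $\tau\in W_{n-2}$. Since $\tau s_n=s_n\tau$, identity \eqref{last!} becomes $\sum_{\tau\in W_{n-2}}\bigl(\sum_{v\in V}F(v s_n\tau T)-\sum_{v\in V}F(v\tau T)\bigr)=0$ with $F:=\sum_{i=1}^n i\,d_i$, so it is enough to prove
\[ \sum_{v\in V}F(v s_n y)=\sum_{v\in V}F(vy)\qquad\text{for all }y\in\calB(\lambda). \]

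Next I would expand $F(v_j s_n y)-F(v_j y)$ for each $j$ and strip leftmost reflections from the words $s_j,s_{j+1},\dots$ inside $d_i(v_j s_n y)$ and $d_i(v_j y)$ using the two invariances above, stopping once the current leftmost index lands in $\{i-1,i,i+1\}$. A short case analysis — with the extreme indices $i\in\{1,n-1,n\}$ handled separately, and using $d_i\circ s_n=d_i$ ($i\le n-2$) and $d_n\circ s_n=d_n$ to collapse the right-hand side of \eqref{last!} — reduces the displayed equality to the identity
\[ 2\sum_{i=1}^{n-1}i\,\Delta_i^{BD}+\sum_{i=2}^{n}i(i-1)\,\Delta_i^{C}=0, \]
where $\Delta_i^{BD}:=d_i(s_{i+1}\cdots s_n y)-d_i(s_{i+1}\cdots s_{n-1}y)$ and $\Delta_i^{C}:=d_i(s_{i-1}s_is_{i+1}\cdots s_n y)-d_i(s_{i-1}s_is_{i+1}\cdots s_{n-1}y)$ (with the obvious reading at $i=n-1,n$); note that $\Delta_{n-1}^{BD}=d_{n-1}(s_n y)-d_{n-1}(y)$ is exactly the surviving contribution of the right-hand side.

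Finally, the two elements compared in $\Delta_i^{BD}$ (and, after stripping a commuting prefix, in $\Delta_i^{C}$) are of the shape $d_i(Q z)$ and $d_i(Q z')$ for a fixed short word $Q$ and crystal elements $z,z'$ with $z=s_{\alpha_{i+2,n}}z'$, because $s_{i+2}\cdots s_{n-1}$ conjugates $s_n$ to $s_{\alpha_{i+2,n}}$. I would then apply \Cref{21explicit} to the rank-$2$ subsystems $\langle\alpha_i,\alpha_{i+1}\rangle$ — and, for $\Delta_i^C$, also $\langle\alpha_{i-1},\alpha_i\rangle$ — together with its string-reversed companion, to rewrite $\eps_i$ and $\phi_i$ at these points in terms of $\langle\wt,\cdot\rangle$ and the value at a single reference point; this produces relations between $\Delta_i^C$ and $\Delta_{i-1}^{BD}$ that telescope the weighted sum to $0$. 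The base case $n=2$ is $d_1(s_2 y)+d_2(s_1 s_2 y)=d_1(y)+d_2(s_1 y)$, which follows directly from the explicit action on $A_2$ string patterns recorded in the proof of \Cref{21explicit}. The hard part will be this last step: isolating the correct rank-$2$ relations and checking that the coefficients $2i$ and $i(i-1)$ emerging from the peeling are exactly those forced by the telescoping; the peeling itself is routine but delicate at the boundary indices.
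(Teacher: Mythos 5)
Your very first reduction is where the argument breaks: the per-coset statement you reduce to is false for $n\geq 3$. Writing $\sigma=v\tau$ with $v\in V$, $\tau\in W_{n-2}$ and using $\tau s_n=s_n\tau$ only shows that \eqref{last!} equals the sum over $\tau\in W_{n-2}$ of the differences $\sum_{v\in V}F(vs_ny)-\sum_{v\in V}F(vy)$ with $y=\tau T$; these differences are not individually zero, they only cancel after the sum over $W_{n-2}$, which your reduction discards. Concretely, take $n=3$, $\lambda=2\varpi_1$, and let $y\in\calB(2\varpi_1)$ be the unique element of weight $\epsilon_2+\epsilon_3$ (the monomial $x_2x_3$ in $\sym^2\bbC^4$). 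All weight spaces here are one-dimensional, so $\sigma$ sends the element of weight $\mu$ to the element of weight $\sigma(\mu)$, and $d_i=1$ exactly on the element of weight $\epsilon_i+\epsilon_{i+1}$ and $d_i=0$ elsewhere. With $F=d_1+2d_2+3d_3$ and $V=\{e,s_2,s_1s_2\}$ one finds
\[ \sum_{v\in V}F(vy)=F(x_2x_3)+F(x_2x_3)+F(x_1x_3)=2+2+0=4,\qquad \sum_{v\in V}F(vs_3y)=F(x_2x_4)+F(x_3x_4)+F(x_3x_4)=0+3+3=6, \]
so your displayed claim fails; the excess $+2$ at $y$ is compensated by a $-2$ at $s_1y$ (weight $\epsilon_1+\epsilon_3$), i.e.\ precisely by the averaging over $W_{n-2}$. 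The subsequent peeling and telescoping cannot repair this, because it is aimed at a false identity: after stripping commuting reflections one is left needing genuinely higher-rank cancellations such as $d_2(s_1s_3y)-d_2(s_1y)-d_2(s_3y)+d_2(y)=0$, which fails pointwise (it equals $1$ in the example above), whereas \Cref{21explicit} only supplies rank-$2$ cancellations.

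For comparison, the paper's proof never localizes to cosets: it keeps the full sum over $W_{n-1}$, argues by induction on $n$ (the inductive hypothesis makes the function $\sum_{\sigma\in W_{n-2}}\sum_{i\leq n-1}i\,d_i(\sigma\,\cdot)$ fully $W_{n-1}$-invariant), and uses $d_i\circ s_j=d_i$ for $i=j$ or $|i-j|\geq 2$ to strip everything down to the indices $n-1,n$; the only pointwise statement that survives is the rank-$2$ identity $d_n(s_{n-1}T)+d_{n-1}(T)=d_n(s_{n-1}s_nT)+d_{n-1}(s_nT)$, which is true and follows from \Cref{21explicit}. Your base case $n=2$ is exactly that identity (there the coset reduction is lossless since $W_{n-2}$ is trivial), but for $n\geq 3$ the per-coset refinement you need is strictly stronger than \eqref{last!} and, as shown, false.
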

\begin{proof}
	For $n=1$, \eqref{last!} reduces to $d_1(s_1(T))=d_1(T)$, which is clear by definition.
		We assume by induction that \eqref{last!} holds for $n-1$.
	Notice that \[W_{n-1} = W_{n-2}\sqcup \bigsqcup_{j=1}^n W_{n-2}s_{n-1}\cdots s_j \]

By induction we have
\begin{align}\label{ugly1}\sum_{\sigma \in W_{n-1}} \sum_{i=1}^{n-1} id_i(\sigma(T))  &=\sum_{\sigma \in W_{n-2}} \sum_{i=1}^{n-1} id_i(\sigma (T))+ \sum_{j=1}^{n-1}\sum_{\sigma \in W_{n-2}} \sum_{i=1}^{n-1} id_i(\sigma s_{n-1}\cdots s_j (T)) \nonumber\\	
	&=n\sum_{\sigma \in W_{n-2}} \sum_{i=1}^{n-1} id_i(\sigma (T))
\end{align}

Notice that $d_i\circ s_j=d_i$ if $i\neq j\pm 1$. Hence, 
for $i\leq n-2$ we have $d_i(\sigma(T))=d_i(s_n \sigma(T))=d_i(\sigma s_n(T))$ for all $\sigma \in W_{n-2}$. Setting $T':=s_n(T)$, we can rewrite \eqref{ugly1} as
\begin{equation}\label{ugly12}
n(n-1)	\sum_{\sigma \in W_{n-2}} d_{n-1}(\sigma (T))+n\sum_{\sigma \in W_{n-2}} \sum_{i=1}^{n-2}id_{i}(\sigma (T')).
\end{equation}
Using the same argument, we get 
\begin{equation}\label{ugly13}\sum_{\sigma \in W_{n-1}} \sum_{i=1}^{n-1} id_i(\sigma(T'))=n(n-1)	\sum_{\sigma \in W_{n-2}} d_{n-1}(\sigma (T'))+n\sum_{\sigma \in W_{n-2}} \sum_{i=1}^{n-2}id_{i}(\sigma (T')).\end{equation}

Combining \eqref{ugly1}, \eqref{ugly12} and \eqref{ugly13}, we can rewrite the original statement as follows.
\begin{equation}\label{ugly2} \sum_{\sigma \in W_{n-1}} d_n(\sigma T)+(n-1)\sum_{\sigma \in W_{n-2}} d_{n-1}(\sigma T)=\sum_{\sigma \in W_{n-1}} d_n(\sigma T')+(n-1)\sum_{\sigma \in W_{n-2}} d_{n-1}(\sigma T').\end{equation}

We have
\begin{align*} \sum_{\sigma \in W_{n-1}} d_n(\sigma (T))&=\sum_{\sigma \in W_{n-2}}\left(d_n(\sigma T)+\sum_{j=1}^{n-1}d_n(\sigma s_{n-1}\ldots s_jT)\right)\\
	&= (n-1)!\left(d_n(T)+\sum_{j=1}^{n-1}d_n(s_{n-1}\ldots s_jT)\right).
	\end{align*}

Thus, after dividing by $(n-1)!$, the identity \eqref{ugly2} is equivalent to saying that  
\[d_n(T)+\sum_{j=1}^{n-1} d_n(s_{n-1}\ldots s_j (T))+d_{n-1}(T)+\sum_{j=1}^{n-2}d_{n-1}(s_{n-2}\ldots s_j(T))\]
does not change when replacing $T$ with $T'$.
This follows from $d_n(T)=d_n(T')$ and the following claim.
\begin{claim}
	For any $T\in \calB(\lambda)$ we have \[d_n(s_{n-1}(T))+d_{n-1}(T)=d_{n}(s_{n-1}s_{n}(T))+d_{n-1}(s_n(T)).\]
\end{claim}
\begin{proof}[Proof of the claim.]
This is a rank $2$ statement. 
We can thus assume $n=2$ and $r:=s_1s_2s_2=s_{\alpha_1+\alpha_2}$. After replacing $T$ with $s_1(T)$ this is equivalent to 
\[d_2(T)+d_{1}(T)=d_2(r(T))+d_1(r(T)).\]
Recall that $r(T)=s_{\alpha_1+\alpha_2}(T)$ belongs to the $\alpha_1+\alpha_2$-string of $T$. 
Assume that $d_1(T)=\eps_1(T)$ and $d_2(T)=\eps_2(T)$ (the other cases are similar).
Then \[d_2(T)+d_1(T)=\eps_1(T)+\eps_2(T)=\eps_1(T)+\phi_2(T)-\langle \wt(T),\alpha_2^\vee\rangle.\] On the other side, we have 
\begin{align*}d_2(r(T))+d_1(r(T))&=\phi_1(r(T))+\phi_2(r(T))=\eps_1(r(T))+\phi_2(r(T))-\langle \wt(T),r(\alpha_1^\vee)\rangle.\end{align*}
Now the claim follows from \Cref{21explicit}. 
\end{proof}

\end{proof}

\begin{remark}
It was pointed out to the author by M. Shimozono the similarity between our formula for the charge  in \Cref{main} and Nakayashiki--Yamada's description \cite[Corollary 4.2]{NYKostka}. In fact, in both formulas the terms occurring are indexed by positive roots. It seems very plausible (and confirmed by computations on examples in small rank) that the two formulas are actually equivalent, in the sense that they coincide term-wise.  We remark that Nakayashiki--Yamada's formula is of very different nature, as it involves the energy function on tensor products of  Kirillov--Reshetikhin affine crystals and the combinatorial $R$-matrix, which do not play a role in the present paper. We believe it is worth further investigations to clarify the connections between these two descriptions of the charge statistic.
\end{remark}

\def\cprime{$'$}

\bibliography{mybiblio}

\newcommand{\etalchar}[1]{$^{#1}$}
\def\cprime{$'$}
\begin{thebibliography}{BDSW14}

\bibitem[BBD{\etalchar{+}}22]{BBD+Towards}
Charles Blundell, Lars Buesing, Alex Davies, Petar Veli{\v{c}}kovi{\'c}, and
  Geordie Williamson.
\newblock Towards combinatorial invariance for kazhdan-lusztig polynomials.
\newblock {\em Representation Theory of the American Mathematical Society},
  26(37):1145--1191, 2022.

\bibitem[BDSW14]{BDSWNote}
Barbara Baumeister, Matthew Dyer, Christian Stump, and Patrick Wegener.
\newblock A note on the transitive {H}urwitz action on decompositions of
  parabolic {C}oxeter elements.
\newblock {\em Proc. Amer. Math. Soc. Ser. B}, 1:149--154, 2014.

\bibitem[BFG06]{BFGUhlenbeck}
Alexander Braverman, Michael Finkelberg, and Dennis Gaitsgory.
\newblock Uhlenbeck spaces via affine {L}ie algebras.
\newblock In {\em The unity of mathematics}, volume 244 of {\em Progr. Math.},
  pages 17--135. Birkh\"{a}user Boston, Boston, MA, 2006.

\bibitem[BG01]{BGCrystals}
Alexander Braverman and Dennis Gaitsgory.
\newblock Crystals via the affine {G}rassmannian.
\newblock {\em Duke Math. J.}, 107(3):561--575, 2001.

\bibitem[Bry89]{BriLimits}
Ranee~Kathryn Brylinski.
\newblock Limits of weight spaces, {L}usztig's {$q$}-analogs, and fiberings of
  adjoint orbits.
\newblock {\em J. Amer. Math. Soc.}, 2(3):517--533, 1989.

\bibitem[BS17]{BSCrystal}
Daniel Bump and Anne Schilling.
\newblock {\em Crystal bases}.
\newblock World Scientific Publishing Co. Pte. Ltd., Hackensack, NJ, 2017.
\newblock Representations and combinatorics.

\bibitem[Deo85]{DeoSome}
Vinay~V. Deodhar.
\newblock On some geometric aspects of {B}ruhat orderings. {I}. {A} finer
  decomposition of {B}ruhat cells.
\newblock {\em Invent. Math.}, 79(3):499--511, 1985.

\bibitem[Dye92]{DyeHecke}
M.~J. Dyer.
\newblock Hecke algebras and shellings of {B}ruhat intervals. {II}. {T}wisted
  {B}ruhat orders.
\newblock In {\em Kazhdan--{L}usztig theory and related topics ({C}hicago,
  {IL}, 1989)}, volume 139 of {\em Contemp. Math.}, pages 141--165. Amer. Math.
  Soc., Providence, RI, 1992.

\bibitem[Dye19]{DyeWeak}
Matthew Dyer.
\newblock On the weak order of {C}oxeter groups.
\newblock {\em Canad. J. Math.}, 71(2):299--336, 2019.

\bibitem[Kam07]{KamCrystal}
Joel Kamnitzer.
\newblock The crystal structure on the set of {M}irkovi\'{c}-{V}ilonen
  polytopes.
\newblock {\em Adv. Math.}, 215(1):66--93, 2007.

\bibitem[Kas91]{KasCrystal}
M.~Kashiwara.
\newblock On crystal bases of the {$Q$}-analogue of universal enveloping
  algebras.
\newblock {\em Duke Math. J.}, 63(2):465--516, 1991.

\bibitem[LL21]{LLAtomic}
C\'{e}dric Lecouvey and Cristian Lenart.
\newblock Atomic decomposition of characters and crystals.
\newblock {\em Adv. Math.}, 376:51, 2021.

\bibitem[LLT95]{LLTCrystal}
Alain Lascoux, Bernard Leclerc, and Jean-Yves Thibon.
\newblock Crystal graphs and {$q$}-analogues of weight multiplicities for the
  root system {$A_n$}.
\newblock {\em Lett. Math. Phys.}, 35(4):359--374, 1995.

\bibitem[LP23]{LPAffine}
Nicolas Libedinsky and Leonardo Patimo.
\newblock On the affine {H}ecke category for $sl_3$.
\newblock {\em Selecta Mathematica}, 29(4):64, 2023.

\bibitem[LPP22]{LPPPre}
Nicolas Libedinsky, Leonardo Patimo, and David Plaza.
\newblock Pre-canonical bases on affine {H}ecke algebras.
\newblock {\em Adv. Math.}, 399:Paper No. 108255, 2022.

\bibitem[LS78]{LSSur}
Alain Lascoux and Marcel-Paul Sch\"{u}tzenberger.
\newblock Sur une conjecture de {H}. {O}. {F}oulkes.
\newblock {\em C. R. Acad. Sci. Paris S\'{e}r. A-B}, 286(7):A323--A324, 1978.

\bibitem[LS81]{LSLe}
Alain Lascoux and Marcel-P. Sch\"{u}tzenberger.
\newblock Le mono\"{\i}de plaxique.
\newblock In {\em Noncommutative structures in algebra and geometric
  combinatorics ({N}aples, 1978)}, volume 109 of {\em Quad. ``Ricerca Sci.''},
  pages 129--156. CNR, Rome, 1981.

\bibitem[Lus83]{LusSingularities}
George Lusztig.
\newblock Singularities, character formulas, and a {$q$}-analog of weight
  multiplicities.
\newblock In {\em Analysis and topology on singular spaces, {II}, {III}
  ({L}uminy, 1981)}, volume 101 of {\em Ast\'erisque}, pages 208--229. Soc.
  Math. France, Paris, 1983.

\bibitem[Lus90]{LusCanonical}
G.~Lusztig.
\newblock Canonical bases arising from quantized enveloping algebras.
\newblock {\em J. Amer. Math. Soc.}, 3(2):447--498, 1990.

\bibitem[NY97]{NYKostka}
Atsushi Nakayashiki and Yasuhiko Yamada.
\newblock Kostka polynomials and energy functions in solvable lattice models.
\newblock {\em Selecta Math. (N.S.)}, 3(4):547--599, 1997.

\bibitem[Pat21]{PatCombinatorial}
Leonardo Patimo.
\newblock A combinatorial formula for the coefficient of q in
  {K}azhdan--{L}usztig polynomials.
\newblock {\em Int. Math. Res. Not. IMRN}, (5):3203--3223, 2021.

\bibitem[Pat22]{PatBases}
Leonardo Patimo.
\newblock Bases of the intersection cohomology of {G}rassmannian {S}chubert
  varieties.
\newblock {\em J. Algebra}, 589:345--400, 2022.

\bibitem[Pat23]{ChargeGen}
Leonardo Patimo.
\newblock Charges via the affine {G}rassmannian, 2023.

\bibitem[PT23]{PTAtoms}
Leonardo Patimo and Jacinta Torres.
\newblock Atoms and charge in type ${C}_2$, 2023.

\bibitem[Ste98]{StePartial}
John~R. Stembridge.
\newblock The partial order of dominant weights.
\newblock {\em Adv. Math.}, 136(2):340--364, 1998.

\end{thebibliography}
\bibliographystyle{alpha}

\Address

\end{document}